\DeclareMathAlphabet{\cat}{OT1}{cmss}{m}{sl}
\newtheorem{theorem}{Theorem}[section]
\newtheorem{proposition}[theorem]{Proposition}
\newtheorem{lemma}[theorem]{Lemma}
\newtheorem{corollary}[theorem]{Corollary}
\theoremstyle{definition}
\newtheorem{remark}[theorem]{Remark}
\newtheorem{example}[theorem]{Example}
\newtheorem{notation}[theorem]{Notation}
\newcommand{\sep}{\mathrm{sep}}
\title[On the Tits--Weiss conjecture and the Kneser--Tits conjecture]{On the Tits--Weiss conjecture
and the Kneser--Tits conjecture for $\mathrm{E}^{78}_{7,1}$ and $\mathrm{E}^{78}_{8,2}$ } 
\author
[S. Alsaody]
{Seidon Alsaody}
\thanks{S.\ Alsaody wishes to thank PIMS for their financial support.}
\author
[V. Chernousov]
{Vladimir Chernousov}
\thanks{ V.\ Chernousov was partially supported by  Alexander von Humboldt Foundation 
and an NSERC research grant}
\author
[A. Pianzola]
{Arturo Pianzola \\ \\ with an appendix by Richard M.\ Weiss}
\thanks{A.\ Pianzola wishes to thank NSERC and CONICET for their
continuous support}
\address
{Department of Mathematical Sciences, University of Alberta,
Edmonton, Alberta, Canada T6G 2G1}
\email{alsaody@ualberta.ca}
\address
{Department of Mathematical Sciences, University of Alberta,
Edmonton, Alberta, Canada T6G 2G1}
\email {vladimir@ualberta.ca}
\address
{Department of Mathematical Sciences, University of Alberta,
Edmonton, Alberta, Canada T6G 2G1}
\email{a.pianzola@ualberta.ca}
\begin{document}
\begin{bibunit}
\begin{abstract} We prove that the structure group of any Albert algebra over an arbitrary field  is $R$-trivial.  This implies the  Tits--Weiss conjecture
 for Albert algebras and the Kneser--Tits conjecture for
isotropic groups of type $\mathrm{E}_{7,1}^{78}, \mathrm{E}_{8,2}^{78}$. As a further corollary, we show that
some standard conjectures on the groups of $R$-equivalence classes  in algebraic groups 
and the norm principle are true for strongly inner forms 
of type $^1\mathrm{E}_6$.
\end{abstract}

\maketitle

\section{Introduction}

The primary aim of this paper is to prove the long standing 
Tits--Weiss conjecture on $U$-operators in Albert algebras and the Kneser--Tits
conjecture for algebraic groups of type $\mathrm{E}^{78}_{7,1}$ and $\mathrm{E}^{78}_{8,2}$.

The Tits--Weiss conjecture asserts that the structure group ${\rm Str}(A)$ 
of an arbitrary Albert algebra $A$ 
is generated by the inner structure group, formed by the so-called $U$-operators, and the central homotheties. This problem 
was raised  by Tits and Weiss 
in their 2002 book \cite{TiW}, where they studied spherical buildings and the corresponding generalized polygons attached to isotropic groups of relative rank $2$.  
Despite many efforts, this problem has remained out of reach.

If $G$ is an isotropic  simple simply connected group over $K$ of relative rank $\geq 2$
then by \cite{PrasRag} the group $G(K)$ is generated by $K$-points of isotropic subgroups
of $G$ of relative rank $1$. This result allows to reduce many problems for $G(K)$ to groups of relative rank $1$. For instance, this is the case for the Kneser-Tits
problem (see below). 
Note also that isotropic groups of relative rank $1$ give rise to important examples
of more general  groups of rank one.
The latter were introduced by Tits in the early 1990s, 
who called them Moufang sets. They have proved to be important in the classification of simple groups, incidence geometry, the
theory of buildings, and other areas.
Further still, rank one groups are useful in studying 
isotropic groups of exceptional types, where algebraic groups and their associated root subgroups 
are typically parametrized 
by a nonassociative structure, and, as emphasized in \cite{MMS},
a rich interplay emerges between rank one groups, nonassociative algebras, and linear algebraic groups.

The Kneser--Tits conjecture for a simple simply connected isotropic group ${\bf G}$ over a field $K$ asserts
that the abstract group ${\bf G}(K)$ of $K$-points of ${\bf G}$ coincides with its normal subgroup 
${\bf G}(K)^+$  generated by the unipotent 
radicals of the minimal parabolic $K$-subgroups of ${\bf G}$. We refer to \cite{gille} for
a survey of the history and recent results on this conjecture.
Its importance 
comes from the fact that the group ${\bf G}(K)^+$ has a  natural $BN$-pair structure and hence 
 is projectively simple (i.e.\ simple modulo its centre), by a celebrated theorem of Tits. 
So if ${\bf G}(K)={\bf G}(K)^+$
we would have many more new examples of  projectively simple abstract groups
given by $K$-points of isotropic simple simply connected algebraic groups. In this way, we would obtain analogues of finite simple groups of Lie type in the case 
of infinite fields.
It is also worth mentioning that the information about the normal subgroup structure of ${\bf G}(K)$ is crucial
in the arithmetic of algebraic groups for studying, among other things, congruence subgroups, discrete subgroups,
 lattices, and locally symmetric spaces.
In general, the Kneser--Tits conjecture does not hold, and the first counterexample was constructed 
by V.\ Platonov in 1975 \cite{Pl}. However, it is believed by specialists that 
the conjecture holds for many isotropic groups of exceptional type, including those of type $\mathrm{E}_{7,1}^{78}$
and $\mathrm{E}_{8,2}^{78}$. 

The bridge connecting the Tits--Weiss conjecture and the Kneser--Tits conjecture
for the abovementioned forms of type $\mathrm{E}_7$ and $\mathrm{E}_8$ is provided by a theorem of
Tits and Weiss (see the Appendix), which states that the two conjectures are equivalent.\footnote{
The proof of the equivalence of the two conjectures is not straightforward, and no direct reference is available.
 We  are grateful  to Richard Weiss for writing a detailed proof of this equivalence. It is included as an appendix to this paper.}
 It is interesting to mention that the proof in the Appendix is characteristic free.
Furthermore, using  P.\ Gille's results on Whitehead groups in \cite{gille} on Whitehead groups, one can easily see that 
\begin{enumerate}
 \item the Kneser--Tits conjecture for the abovementioned groups reduces to the
$R$-triviality of structure groups of Albert algebras, and
\item the conjecture holds in arbitrary characteristic once it is established in characteristic zero.
\end{enumerate}

Our main result is the following.

\smallskip

\noindent
{\bf Theorem.} {\it Let $A$ be an Albert algebra over a field $K$. Then the structure group ${\bf Str}(A)$ of $A$ is $R$-trivial, i.e.
for any field extension $F/K$ the group of $R$-equivalence classes ${\bf Str}(A)(F)/R$
is trivial.}

\smallskip

As explained above, this implies that the Tits--Weiss conjecture on $U$-opera\-tors holds
for Albert algebras over any field, and that the same
is true for the Kneser--Tits conjecture for groups of type $\mathrm{E}^{78}_{7,1}$ and $\mathrm{E}^{78}_{8,2}$.
Our proof is of a geometric nature. We carefully analyze   the properties
of the natural action of the structure group ${\bf Str}(A)$ on the corresponding Albert algebra
$A$.
The information
that we need is encoded in the Galois cohomology of the  stabilizers of subalgebras of $A$.
We compute the Galois cohomology of all these stabilizers and using this information, we explicitly construct a system of generators 
of ${\bf Str}(A)(K)$, which we prove is $R$-trivial.  

\smallskip

In the course of the proof of the above theorem we reprove a result due to S.\ Garibaldi and H.\ Petersson
on the weak Skolem--Noether Theorem problem  for isomorphic embeddings (for the terminology,
definitions and the precise statement we refer to \cite{GP}). Note that the more general  
 Skolem--Noether problem for isotopic (as opposed to isomorphic) embeddings is still out of reach.
In their joint paper \cite{GP}, S.\ Garibaldi and H.\ Petersson write:

\smallskip

\noindent
{\it Regrettably, the methodological arsenal assembled in the present paper, consisting 
as it does of rather elementary manipulations  involving the two Tits constructions, does not
seem strong enough to provide an affirmative answer to this question.}

\smallskip

Our proof of the Skolem--Noether theorem for isomorphic embeddings is much shorter and is based 
on torsor techniques. It seems plausible that it can be also used 
for the proof of more difficult Skolem--Noether problem for isotopic embeddings.
For that reason, we include our alternative proof in Section \ref{SN} below.

\smallskip

We would also like to mention that it follows directly from our main theorem that two standard conjectures hold for simple simply connected strongly inner forms
of type $\mathrm{E}_6$: the abelian nature of the group of $R$-equivalence classes,
and the existence of transfers for the functor of $R$-equivalence classes. For these groups, the norm principle holds as well.
For the details we refer to the last section of the paper.

\smallskip

Lastly we mention that when the work on this paper was completed, M.\ Thakur posted a 
preprint \cite{Thakur4} on arXiv where he proves the same result. His proof differs from
our and is based on some explicit formulas for
automorphisms of subalgebras of Albert algebras and extensions of these automorphisms.

\smallskip

\noindent
{\it Acknowledgements.} We are grateful to H.\ Petersson, A.\ Stavrova and R.\ Weiss for fruitful discussions 
and comments.  

\subsection*{Convention} In view of point (2) above, it suffices, in order to prove our main results over arbitrary fields, 
to work over a base field of characteristic zero. In view of the possible independent interest of partial results, we will be less restrictive. We therefore fix a field
$K$ which, unless otherwise stated, is infinite and of characteristic different from 2 and 3.

\section{Preliminaries}\label{preliminaries}
For later use we record some facts about Albert algebras and algebraic groups. 

\subsection{Albert Algebras}
A \emph{Jordan algebra} over $K$ is a unital, commutative, not necessarily associative $K$-algebra $A$ in which the Jordan identity
\[(xy)(xx)=x(y(xx))\]
is satisfied. In particular, $A$ is power associative. Given an associative algebra $B$ with multiplication $\cdot$, the anticommutator $\frac{1}{2}(x\cdot y+y\cdot x)$ 
 defines on $B$ the structure of a Jordan algebra, denoted by 
 $B^+$.
 A Jordan algebra $A$ is said to be \emph{special} if it is isomorphic to a Jordan 
subalgebra of $B^+$
for an associative algebra $B$, and \emph{exceptional} otherwise.
An \emph{Albert algebra} is by definition a simple, exceptional Jordan algebra. 
It is known that any Albert algebra has dimension 27, and, if the field $K$ is separably closed, then
all Albert algebras over $K$ are isomorphic, as follows from \cite[37.11]{KMRT}.
Thus over an arbitrary field $K$ all Albert algebras are twisted forms 
of each other.

The automorphism group ${\bf H}={\bf Aut}(A)$ of an Albert algebra $A$ is a simple algebraic group  over $K$ 
of type ${\rm F}_4$. It is known that any such group arises in this fashion, and that two Albert algebras are isomorphic if and only if their automorphism groups are.
Moreover, $A$ is equipped with a cubic form $N:A\to K$, called the \emph{norm} of $A$. 
Our main object of study is the \emph{structure group} ${\bf Str}(A)$ of $A$, which is an algebraic group. For any $K$-ring $R$ we have
\[{\bf Str}(A)(R)=\{ x\in {\bf GL}(A)(R)\ | \ N_R(x(a))=\nu(x) N_R(a)\ \ \forall \,a\in A\otimes R\ \},\]
where $N_R$ is the base change of $N$ to $A\otimes R$, and the \emph{multiplier} $\nu(x)\in R^\times$ depends only on $x$.

The derived subgroup $${\bf G}=[{\bf Str}(A),{\bf Str}(A)]$$ of ${\bf Str}(A)$ 
is known to be a strongly inner form of a split  simple simply connected algebraic group
of type ${\rm E}_6$. Moreover ${\bf Str}(A)$ is an almost direct product of ${\bf G}_m$ and ${\bf G}$ (the intersection being the centre
of $\bf{G}$). Hence $$\overline{{\bf G}}={\bf Str}(A)/{\bf G}_m$$ is an adjoint group of type ${\rm E}_6$.
Since in the split, and even isotropic, case, ${\bf G}$  and ${\bf Str}(A)$ are 
$R$-trivial (see \cite{ChPl}), we may, in the proof of the main theorem, assume
that $\bf G$ is $K$-anisotropic. This amounts to the Albert algebra $A$ being a division 
algebra, which is equivalent to 
the norm map $N$ being anisotropic, i.e. 
the equation $N(a)=0$ having no non-zero solutions over $K$. 

We denote the group of $K$-points of ${\bf Str}(A)$ (resp. ${\bf G},\, {\overline{\bf G}},\, {\bf H}$) by
${\rm Str}(A)$ (resp. $G,\,\overline{G},\,H$). The group ${\bf H}$ coincides with the stabilizer in ${\bf Str}(A)$ of  $1\in A$; see e.g.\ \cite[5.9.4]{SV}.

\subsection{Subgroups of Type $\mathrm D_4$}
First we recall a statement about groups of type $\mathrm D_4$ 
inside split groups of type $\mathrm{F}_4$, proved in \cite{ACP}.
 
\begin{proposition}\label{splitD_4} Let ${\bf H}$ be a split group of type $\mathrm{F}_4$ over $K$.
Then any $K$-subgroup ${\bf M}\subset {\bf H}$  of type $\mathrm{D}_4$ is quasi-split.
\end{proposition}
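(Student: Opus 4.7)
The plan is to exploit the classical description of $\mathrm{D}_4$-subgroups of split $\mathrm{F}_4$ via triality. First, I would fix a canonical split reference subgroup $\mathbf{M}_0 \subset \mathbf{H}$ of type $\mathrm{D}_4$ — for instance the subgroup $\mathbf{Spin}_8$ generated by the root subgroups associated with the long roots of $\mathrm{F}_4$. A direct root-system analysis, or equivalently an analysis of the action on the Albert algebra, identifies the normalizer $\mathbf{N} = N_\mathbf{H}(\mathbf{M}_0)$ as an (almost) semidirect product $\mathbf{M}_0 \rtimes S_3$, where $S_3$ is realized by elements of $\mathbf{H}(K)$ and acts on $\mathbf{M}_0$ via the triality outer automorphisms.

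Since all subgroups of type $\mathrm{D}_4$ in a split group of type $\mathrm{F}_4$ are conjugate over $\overline{K}$, any $K$-subgroup $\mathbf{M}$ corresponds to a class $\xi \in H^1(K, \mathbf{N})$ whose image in $H^1(K, \mathbf{H})$ is trivial. Using the splitting $S_3 \hookrightarrow \mathbf{N}$, I would decompose the data in $\xi$ into an \emph{outer type} $\bar\xi \in H^1(K, S_3)$ — which corresponds to a cubic étale $K$-algebra $L$ — together with a residual inner class. The outer type alone already determines the quasi-split $K$-form $\mathbf{M}^{qs}$ of $\mathrm{D}_4$ with that étale datum, namely $\mathbf{Spin}$ of a hyperbolic form descended along $L/K$.

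Next I would show that the residual inner twist from $\mathbf{M}^{qs}$ to $\mathbf{M}$ must be trivial. The 27-dimensional Albert representation of $\mathbf{H}$, restricted to $\mathbf{M}_0$, decomposes as a 3-dimensional trivial summand plus three 8-dimensional representations $V_1, V_2, V_3$ cyclically permuted by the triality $S_3$, and the cubic norm on $A$ restricts to the sum of their quadratic forms. After twisting by $\xi$, the space $V_1 \oplus V_2 \oplus V_3$ descends to a $K$-form equipped with a natural 24-dimensional quadratic form determined by $\bar\xi$ and the inner part. Since $\mathbf{H}$ is $K$-split, the twisted Albert algebra is the split one, forcing this descended quadratic form to be hyperbolic; this in turn forces $\mathbf{M} \cong \mathbf{M}^{qs}$, whence $\mathbf{M}$ is quasi-split.

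The main obstacle will be the last step: converting the splitness of $\mathbf{H}$ into the hyperbolicity of the descended quadratic form on the 8-dimensional weight pieces. This requires a careful tracking of how the inner part of $\xi \in H^1(K, \mathbf{N})$ acts on the three triality-related copies of the vector representation, and an argument (via either the explicit action on the split Albert algebra or the triviality of the image in $H^1(K, \mathbf{H})$) showing that no anisotropic twist is consistent with the embedding into a split $\mathrm{F}_4$.
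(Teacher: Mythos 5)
The paper itself gives no proof of this proposition; it simply cites the authors' companion paper [ACP], so I am assessing your sketch on its own merits rather than against an in-text argument. Your framing is reasonable and probably close to the standard setup: fix a split reference $\mathbf{M}_0\subset\mathbf{H}$, use that $\mathbf{N}=N_{\mathbf{H}}(\mathbf{M}_0)$ surjects onto $S_3$ with a lift of the triality realized inside $\mathrm{F}_4$, describe an arbitrary $K$-subgroup of type $\mathrm{D}_4$ by a class in $\ker\bigl[H^1(K,\mathbf{N})\to H^1(K,\mathbf{H})\bigr]$, and relate the outer part to the cubic \'etale subalgebra $L\subset A$ fixed pointwise by $\mathbf{M}$.

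The difficulty is the final step, and it is not merely a gap in exposition but an incorrect claim. You assert that since $\mathbf{H}$ is split, the $24$-dimensional quadratic form on $V=L^{\perp}$ is hyperbolic over $K$. This is false in general. The trace form of the split Albert algebra is $\langle 1,1,1\rangle\perp 12\mathbb{H}$, and its restriction to $L^\perp$ has Witt class $\langle 1,1,1\rangle - T_L$, where $T_L\cong\langle 1,2,2\delta\rangle$ is the trace form of the cubic \'etale algebra $L$ with discriminant $\delta$. Even in the cyclic case ($\delta\in K^{\times 2}$), one gets the class of $\langle 1,1,-2,-2\rangle$, which is anisotropic over, say, $\mathbb{Q}$. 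So the object whose hyperbolicity you would like to exploit is not the $K$-form on $V$; the relevant object is an $8$-dimensional $L$-quadratic form (equivalently, a composition algebra over $L$). And even if the $K$-form on $V$ were hyperbolic, you could not conclude that the $L$-form is, because the transfer map $W(L)\to W(K)$ is not injective. Relatedly, describing the quasi-split group of type $^{3,6}\mathrm{D}_4$ as ``$\mathbf{Spin}$ of a hyperbolic form descended along $L/K$'' is not correct: such a group is not the spin group of any $K$-quadratic form, and quasi-splitness has to be verified over $L$ (or its Galois closure), e.g.\ by showing that the relevant $8$-dimensional form over $L$ is hyperbolic or that the Tits algebras vanish. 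The step you flag as ``the main obstacle'' therefore needs to be recast entirely over $L$; as written, the argument does not close.
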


For later use we need one more fact about groups of type $^{3,6}\mathrm{D}_4$ inside $\mathrm{F}_4$.  Let thus
${\bf M}\subset {\bf H}$ be a split subgroup of type $\mathrm{D}_4$, and consider its normalizer ${\bf N}=N_{\bf H}({\bf M})$. The quotient
group ${\bf N}/{\bf M}$ is isomorphic to the group of outer automorphisms ${\bf Out}({\bf M})$ of ${\bf M}$.
This is the symmetric group $S_3$, which we view as a constant finite group scheme over $K$.

Let $ [\xi]\in H^1(K,N)$ be an arbitrary cohomology class, and consider its image $[\overline{\xi}]\in
H^1 (K,{\bf Out}({\bf M}))$. Since ${\bf Out}({\bf M})$ is a constant group scheme, any cocycle
$\overline{\xi}$ representing it corresponds to a homomorphism $\phi_{\xi}:{\rm Gal}(K^{sep}/K) \to S_3$.
The image ${\rm Im}\,\phi_{\xi}$ is then isomorphic to the Galois group of the minimal Galois extension
$F/K$ over which 
the twisted group $^{\xi}{\bf M}$ becomes a group of inner type. It follows that ${\rm Im}\,\phi_{\xi}$
is generated by the cycle $(123)$ if  $^{\xi}{\bf M}$ has type $^3\mathrm{D}_4$, and is equal to $S_3$ 
 if $^{\xi}{\bf M}$ has type $^6\mathrm{D}_4$. 

\begin{lemma}\label{twistingD_4}
 Assume that $^{\xi}{\bf M}$ has type $^{3,6}\mathrm{D}_4$. Then the natural map 
${\bf ^{\xi}N}(K) \to {^{\overline{\xi}}(S_3)(K)}$ is surjective.
\end{lemma}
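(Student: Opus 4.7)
The plan is to apply the long exact sequence of Galois cohomology to
\[
1 \longrightarrow {}^{\xi}\mathbf{M} \longrightarrow {}^{\xi}\mathbf{N} \longrightarrow {}^{\overline{\xi}}S_3 \longrightarrow 1,
\]
obtaining
\[
{}^{\xi}\mathbf{N}(K) \longrightarrow {}^{\overline{\xi}}S_3(K) \stackrel{\partial}{\longrightarrow} H^1(K, {}^{\xi}\mathbf{M}),
\]
and reducing the claim to the triviality of the connecting map $\partial$.

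The case $^6\mathrm{D}_4$ is immediate: $\Im(\phi_\xi) = S_3$, so ${}^{\overline\xi}S_3(K)$ coincides with the centralizer $Z_{S_3}(S_3) = Z(S_3) = \{e\}$, and there is nothing to lift.

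For the case $^3\mathrm{D}_4$, $\Im(\phi_\xi) = A_3$ and ${}^{\overline\xi}S_3(K) = Z_{S_3}(A_3) = A_3$; it suffices to lift the generator $(123)$. I would exploit that the untwisted sequence $1 \to \mathbf{M} \to \mathbf{N} \to S_3 \to 1$ splits over $K$, via an explicit section $s\colon S_3 \hookrightarrow \mathbf{N}(K)$ coming from the realization of the split $\mathbf{H}$ as the automorphism group of the split Albert algebra $H_3(C_0)$: $\mathbf{M}$ stabilizes the diagonal étale cubic subalgebra $K e_1 \oplus K e_2 \oplus K e_3$, and $S_3$ acts by permuting the three idempotents $e_i$. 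Setting $\tau = s((123))$, its $\mathbf{M}$-centralizer is a split subgroup $\mathbf{G}_2 \subset \mathbf{M}$ of type $\mathrm{G}_2$. Decomposing $\xi_\sigma = m_\sigma \cdot s(\overline\xi_\sigma)$ with $m_\sigma \in \mathbf{M}(K^{\mathrm{sep}})$ and using that $\overline\xi_\sigma \in A_3$ commutes with $(123)$ (so that $s(\overline\xi_\sigma)$ commutes with $\tau$ inside the image of $s$), the obstruction class $\partial((123))$ is represented by the cocycle
\[
c_\sigma \;=\; \tau^{-1}\xi_\sigma\tau\xi_\sigma^{-1} \;=\; \phi^{-1}(m_\sigma)\,m_\sigma^{-1},
\]
where $\phi = \mathrm{Int}(\tau)|_{\mathbf{M}}$ is the triality automorphism.

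The main obstacle is to show $[c] = 0$ in $H^1(K, {}^{\xi}\mathbf{M})$, equivalently to produce $\nu \in \mathbf{M}(K^{\mathrm{sep}})$ such that $\nu\tau$ is invariant under the twisted Galois action, giving a $K$-point of ${}^{\xi}\mathbf{N}$ lifting $(123)$. I expect this to follow by showing that every $\xi \in Z^1(K,\mathbf{N})$ whose image in $S_3$ lies in $A_3$ is cohomologous to a cocycle with values in the centralizer $Z_\mathbf{N}(\tau) = \mathbf{G}_2 \cdot \langle\tau\rangle$: in that situation $\tau$ itself is already a $K$-point of ${}^{\xi}\mathbf{N}$. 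The required reduction is a statement about the homogeneous variety $\mathbf{M}/\mathbf{G}_2$ (together with its Galois action twisted by $\xi$), whose technical content is the triviality of the associated $\mathbf{G}_2$-torsor; I expect this to follow either from a direct geometric analysis exploiting the structure of $\mathbf{M} = \mathrm{Spin}_8$ and its $\mathrm{G}_2$-subgroup, or from structural results on quasi-split groups of type $^3\mathrm{D}_4$ (as guaranteed by Proposition \ref{splitD_4}).
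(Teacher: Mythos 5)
Your reduction is set up correctly: twisting the exact sequence $1 \to \mathbf{M} \to \mathbf{N} \to S_3 \to 1$ by $\xi$ and running the long exact sequence in nonabelian cohomology, the $^6\mathrm{D}_4$ case is indeed vacuous, and in the $^3\mathrm{D}_4$ case the claim becomes the vanishing of the connecting map on the generator $(123)$. Your computation of the obstruction cocycle $c_\sigma = \tau^{-1}\xi_\sigma\tau\xi_\sigma^{-1} = \phi^{-1}(m_\sigma)m_\sigma^{-1}$ (using the split section $s$ coming from the diagonal idempotent frame and the fact that $s(\bar\xi_\sigma)$ commutes with $\tau$) is also correct.

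The problem is the final step, which is exactly where all of the content lies: you do not prove that $[c]=0$ in $H^1(K,{}^\xi\mathbf{M})$, you only say that you ``expect'' it to follow. The suggested route via the triviality of a $\mathbf{G}_2$-torsor on the twisted form of $\mathbf{M}/\mathbf{G}_2$ is not substantiated and is not obviously on the right track: $H^1(K,\mathbf{G}_2)$ is nontrivial in general (it classifies octonion algebras), so ``triviality of the associated $\mathbf{G}_2$-torsor'' is itself a claim that requires proof, and you have not explained which torsor this is or why it should be split. Likewise, Proposition \ref{splitD_4} tells you that ${}^\xi\mathbf{M}$ is quasi-split, but you never actually deploy this (for instance, via the existence of a $\Gamma$-stable pinning, which for a quasi-split group does yield a section of $\mathrm{Aut} \to \mathrm{Out}$ over $K$); and even a rational outer automorphism of ${}^\xi\mathbf{M}$ is not by itself a $K$-point of ${}^\xi\mathbf{N}$, so a further lifting step would still be owed.

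The paper's proof resolves the $^3\mathrm{D}_4$ case by a single invocation of a theorem of Garibaldi and Petersson in \cite{GP}: any (adjoint, hence any simply connected, via the footnote in the paper) group of type $^3\mathrm{D}_4$ over $K$ admits a $K$-rational outer automorphism, and such an automorphism, viewed inside ${}^\xi\mathbf{N}(K)$, maps onto a generator of ${}^{\bar\xi}S_3(K) \cong A_3$. This is precisely the vanishing statement you are trying to re-derive from scratch, and it is not an elementary fact. In short: your reduction is sound and matches the shape of the problem, but the essential step — supplied in the paper by \cite{GP} — is left as an expectation rather than proved.
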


\begin{proof} 
If ${\rm Im}\,\phi_{\xi}=S_3$ we have $^{\overline{\xi}}(S_3)(K)=1$ and there is noting to prove.
Assume next that ${\rm Im}\,\phi_{\xi}=\langle(123)\rangle$. Then the group $^{\overline{\xi}}(S_3)(K)$ consists of 3 elements.
On the other hand, by \cite{GP}, the group $^{\xi}{\bf M}$, being of type $^3\mathrm{D}_4$, has outer automorphisms over $K$. \footnote{Strictly speaking, it is proved 
in \cite{GP} that any adjoint group of type $^3\mathrm{D}_4$ has  
an outer $K$-automorphism. However, it is easy to show that any such $K$-automorphism can be lifted, modulo inner automorphisms, to the corresponding simply connected group.}
 The image of any such automorphism (viewed as an element in ${\bf ^{\xi}N}(K)$)  under
the natural map generates $^{\overline{\xi}}(S_3)(K)$, and the assertion follows.
\end{proof}

\subsection{$R$-equivalence in Algebraic Groups}

Let ${\bf G}$ be an affine algebraic group defined over $K$.
Two $K$-points $x,y\in {\bf G}(K)$ are called \emph{ $R$-equivalent} if there is a path from 
$x$ to $y$, i.e. if there exists a rational 
map $f:\mathbb A^1\dashrightarrow {\bf G}$ defined at $0,1$ and mapping
$0$ to $x$ and $1$ to $y$. One can easily verify that this is indeed an equivalence 
relation on ${\bf G}(K)$, and that moreover, ${\bf G}$ induces a group structure on the set ${\bf G}(K)/R$ of
all $R$-equivalence classes. We will denote the set of elements 
in ${\bf G}(K)$ equivalent to $1$ by $R{\bf G}(K)$ throughout.

\subsection{$R$-triviality of Cohomology Classes and the Norm Principle}
 Let ${\bf G}$ be a semisimple algebraic group over $K$, ${\bf Z}\subset {\bf G}$ a central subgroup and let 
\begin{equation}\label{R-trivial}
[\xi]\in {\rm Ker}\,[H^1(K,{\bf Z})\longrightarrow H^1(K,{\bf G})].
\end{equation}

\smallskip

\noindent
{\bf Definition.}
 We say that $[\xi]$ is \emph{$R$-trivial} if there exists
$$
c\in {\rm Ker}\,[H^1(K(t),{\bf Z})\longrightarrow H^1(K(t),{\bf G})],
$$
with $K(t)$ a purely transcendental extension of $K$, such that $c=[\xi(t)]$ with $\xi(t)$ defined at $t=0, 1$ and satisfying $[\xi(0)]=1$ and
$[\xi(1)]=[\xi]$.

\begin{remark} The above definition requires some clarification. Here and below, if 
${\bf G}$ is an algebraic group over $K$, then an element in 
${\bf G}(K(t))$ (resp.\ a cocycle in $Z^1(K(t),{\bf G})$),  where $t$ is a variable over $K$, 
is said to be \emph{defined at 0 and 1} if it is in the image of
${\bf G}(\mathcal O)$ (resp.\ $Z^1(\mathcal O,{\bf G})$), where $\mathcal O$ is the intersection 
in $K(t)$ of the localizations $K[t]_{(t)}$ and $K[t]_{(t-1)}$.
In particular, via the evaluation maps $\varepsilon_0,\varepsilon_1:\mathcal O\to K$, we can 
evaluate such an element or a cocycle at 0 and at 1.
\end{remark}

\smallskip

\begin{example}\label{typeA_n} 
Let $D$ be a central simple algebra of degree $n$ over $K$ and set ${\bf G}={\bf SL}(1,D)$. The centre
${\bf Z}$ of ${\bf G}$ is isomorphic to $\boldsymbol{\mu}_n$, and thus $H^1(K,{\bf Z})\simeq K^\times/K^{\times n}$. 
Moreover, $H^1(K,{\bf G})\simeq K^\times /{\rm Nrd}\,(D^\times)$.
Hence
$$
{\rm Ker} [H^1(K,{\bf Z})\longrightarrow H^1(K,{\bf G})]\simeq {\rm Nrd}(D^\times)/K^{\times n},
$$
and as $D$ is the affine space ${\mathbb A}^{n^2}$, any element of the above kernel is $R$-trivial. 
\end{example}

\begin{example}\label{Pfisterforms}
Let $f$ be a Pfister form over $K$,
set ${\bf G}={\bf Spin}(f)$ and let again ${\bf Z}\subset 
{\bf G}$ be its centre.
Then all cohomology classes in (\ref{R-trivial}) are $R$-trivial; indeed, by  \cite[Proposition 7]{Merkurjev1}
the group ${\bf G}/{\bf Z}={\bf PGO}^+(f)$ is stably rational, hence $R$-trivial, and since the canonical map
$$
({\bf G}/{\bf Z})(K)\to {\rm Ker}\,[H^1(K,{\bf Z})\to H^1(K,{\bf G})]
$$ 
is surjective, 
any element in the kernel is $R$-trivial.
\end{example}
\begin{example}\label{example} 
Let $f$ be an $n$-fold Pfister form over $K$, 
and let $g$ be a  nondegenerate subform of $f\oplus \mathbb H$
of codimension $2$, where $\mathbb H$ is the hyperbolic plane. If $d$ is the determinant of $g$ then
we have a decomposition
\begin{equation}\label{decomposition}
 g\oplus a\langle 1, -d\rangle \simeq f\oplus \mathbb H
\end{equation}
for some scalar $a\in K^\times$. 
We claim that the group ${\bf PGO}^+(g)$ is $R$-trivial, or equivalently that the multiplier of any similitude with respect to
$g$ is $R$-trivial. In particular, if ${\bf G}={\bf Spin}(g)$ 
 and $\bf{Z}$ is its centre, then arguing as in the previous example, one finds that
every element in the kernel (\ref{R-trivial})
is $R$-trivial.

To compute the group of multipliers of $g$ we first recall that every multiplier with respect to $g$ is contained in 
the set $N_{K(\sqrt{d})/K}(K(\sqrt{d})^\times)$. Therefore, it follows from (\ref{decomposition})
that a multiplier $m$ with respect to $g$ is a multiplier with respect to $f$ as well, hence  is contained in the value group 
$D(f)$ of $f$. Conversely, (\ref{decomposition})
implies that every element 
$$m\in N_{K(\sqrt{d})/K}(K(\sqrt{d})^\times)\cap D(f)$$ is a multiplier of $g$. 
Let now  $U\subset K(\sqrt{d})$ be the open subvariety consisting of all elements with nonzero
norm and let
$X\subset 
U\times \mathbb{A}^{2^n}$ be the $K$-variety
consisting of the elements $(x,y)$ satisfying $N_{K(\sqrt{d})/K}(x)=f(y)$.
Consider  the map $X\to \mathbb G_m$ given by $(x,y) \to N_{K(\sqrt{d})/K}(x)$. 
Then
the group of multipliers
of $g$ is  the image of the $K$-points of $X$. Since $X$ is $K$-rational, the group of multipliers of $g$ is $R$-trivial. 
\end{example}

\smallskip

Let ${\bf G}$ be a semisimple group over $K$, and let ${\bf Z}\subset {\bf G}$ be a central subgroup.  
For any finite separable extension $L/K$ we have the restriction map
$$
res^L_K: H^1(K,{\bf Z}) \to H^1(L,{\bf Z})
$$
and the corestriction map 
$$
cor^L_K: H^1(L,{\bf Z})\to 
H^1(K,{\bf Z}).
$$

\noindent
{\bf Definition.} Let $L/K$ be a finite field extension. 
We say that the \emph{norm principle} holds for a cohomology  class 
$$
[\eta]\in {\rm Ker}\,[H^1(L, {\bf Z})\longrightarrow H^1(L,{\bf G})]
$$
if
$$
cor^L_K([\eta]) \in {\rm Ker}\,[H^1(K,{\bf Z}) \longrightarrow H^1(K,{\bf G})].  
$$
We also say that the norm principle holds for the pair $({\bf Z},{\bf G})$ if it holds for every $[\eta]\in {\rm Ker}\,[H^1(L, {\bf Z})\longrightarrow H^1(L,{\bf G})]$ 
whenever $L/K$ is a finite extension.

\begin{theorem}[P.\ Gille \cite{Gille}]\label{normprinciple} Let $K$ be a field of arbitrary characteristic
and let $L/K$ be a finite field extension. The norm principle
holds for all $R$-trivial elements $[\eta]\in {\rm Ker}\,[H^1(L, {\bf Z})\longrightarrow H^1(L,{\bf G})]$. 
 Moreover, $cor^L_K([\eta])$ is $R$-trivial.
\end{theorem}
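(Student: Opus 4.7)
The plan is to promote the $R$-equivalence witness from $L$ to one over $K$ by applying corestriction along the entire path and exploiting the centrality of ${\bf Z}$ in ${\bf G}$. First I would reduce to the case that $L/K$ is finite separable: the purely inseparable case can be handled separately using fppf cohomology, since the corestriction along a purely inseparable extension of degree $p^r$ amounts to multiplication by $p^r$, which visibly preserves both membership in the kernel and $R$-triviality. By a tower argument it then suffices to treat a single finite separable extension.

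By hypothesis, there is a cocycle $\xi(t)\in Z^1(L(t),{\bf Z})$, defined at $t=0,1$, with $\xi(0)=1$, $[\xi(1)]=[\eta]$, and whose image in $H^1(L(t),{\bf G})$ is trivial. Triviality means $\xi(t)=\partial\,\bar g(t)$ for some $\bar g(t)\in ({\bf G}/{\bf Z})(L(t))$; multiplying by a suitable constant we may arrange $\bar g(0)=1$. I then apply $cor^{L(t)}_{K(t)}\colon H^1(L(t),{\bf Z})\to H^1(K(t),{\bf Z})$ to the entire path and set $\widetilde\xi(t):=cor^{L(t)}_{K(t)}(\xi(t))$. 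By naturality of corestriction with respect to specialization, $\widetilde\xi(0)=1$ and $\widetilde\xi(1)=cor^L_K([\eta])$. Hence, if $\widetilde\xi(t)$ can be shown to lie in $\operatorname{Ker}[H^1(K(t),{\bf Z})\to H^1(K(t),{\bf G})]$, then $\widetilde\xi(t)$ simultaneously certifies the norm principle and the $R$-triviality of $cor^L_K([\eta])$, completing the argument.

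The remaining task --- exhibiting a $K(t)$-rational preimage of $\widetilde\xi(t)$ under the connecting map $({\bf G}/{\bf Z})(K(t))\to H^1(K(t),{\bf Z})$ --- is the main obstacle. Via Weil restriction, $\bar g(t)$ is a $K(t)$-point of $R_{L/K}(({\bf G}/{\bf Z})_L)$, and the corestriction on $H^1(-,{\bf Z})$ is induced by the trace morphism $R_{L/K}{\bf Z}_L\to{\bf Z}$. Since ${\bf Z}$ is central in ${\bf G}$, a suitably ordered product of Galois conjugates of $\bar g(t)$ makes sense modulo inner automorphisms, and a projection-formula argument --- standard in Gille's treatment of norm principles --- assembles from $\bar g(t)$ an element $\bar h(t)\in({\bf G}/{\bf Z})(K(t))$ with $\partial\bar h(t)=\widetilde\xi(t)$, defined at $t=0,1$ and with $\bar h(0)=1$. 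The technical heart is verifying that this non-abelian manipulation is Galois-equivariant and survives specialization at $t=0,1$; this is precisely where centrality of ${\bf Z}$ is essential, since it ensures that the nonabelian coboundary behaves abelianly up to conjugation, making the projection formula available.
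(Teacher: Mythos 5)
This theorem is stated in the paper as a citation to Gille (the reference \cite{Gille}); the paper gives no proof, so there is nothing to compare your argument against on the paper's side. The question is whether your proof sketch is sound on its own.

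There is a genuine gap at the step you yourself identify as ``the technical heart.'' The issue is that for a nonabelian group $\overline{\mathbf G}=\mathbf G/\mathbf Z$ there is simply no norm map $\overline{\mathbf G}(L)\to\overline{\mathbf G}(K)$, and centrality of $\mathbf Z$ does not produce one. Concretely, if $L/K$ is Galois with group $\Gamma$ and $\bar g\in\overline{\mathbf G}(L)$, then for $\tau\in\Gamma$ one has $\tau\bigl(\prod_{\sigma\in\Gamma}\sigma(\bar g)\bigr)=\prod_{\sigma\in\Gamma}(\tau\sigma)(\bar g)$, which is the same factors in a permuted order; since $\overline{\mathbf G}$ is not abelian, this is not the original product, so ``a suitably ordered product of Galois conjugates of $\bar g(t)$'' is not a $K(t)$-point. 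Centrality of $\mathbf Z$ does make the boundary $\partial\colon\overline{\mathbf G}(L)\to H^1(L,\mathbf Z)$ a group homomorphism, and it makes $cor^{L(t)}_{K(t)}(\xi(t))$ a perfectly well-defined class in $H^1(K(t),\mathbf Z)$; what it does not give you is a preimage of that class in $\overline{\mathbf G}(K(t))$. The existence of such a preimage is exactly the assertion of the norm principle, so your argument is circular at this point: the ``projection-formula argument, standard in Gille's treatment'' is not a formal manipulation but is the theorem being proved.

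For the record, Gille's actual proof does not push forward a single $L$-rational path by a norm map. It proceeds by writing the $R$-trivial class as a product of boundaries of rational paths $\phi(1)\phi(0)^{-1}$ in $\overline{\mathbf G}$, and by a specialization/flasque-resolution argument (ultimately reducing to the classical norm principle for tori via Weil restriction and a pushout central extension $1\to\mathbf Z\to\mathbf G'\to R_{L/K}(\overline{\mathbf G}_L)\to 1$) one shows the corestricted class lifts to $R\overline{\mathbf G}(K)$. That extra structure is what makes the nonabelian obstruction disappear; it cannot be replaced by the abelian projection formula. One further small point: your corestriction $cor^{L(t)}_{K(t)}$ along $L(t)/K(t)$ does agree on specialization with $cor^L_K$, but this needs the compatible choice of the ring $\mathcal O=K[t]_{(t)}\cap K[t]_{(t-1)}$ and its extension over $L$, which you should make explicit; that part, however, is routine. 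The real gap is the one described above.
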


\subsection{Conjugacy of Maximal Tori} 
Let ${\bf G}$ be an absolutely simple semisimple algebraic group over
$K$. Let ${\bf T}$ and ${\bf T}'$ be maximal tori in ${\bf G}$ defined over $K$. Since all maximal tori become conjugate upon extension to 
$K^{sep}$ there exists $g\in {\bf G}(K^{sep})$ such that ${\bf T}'=g{\bf T}g^{-1}$. Since ${\bf T}$ and ${\bf T}'$ are defined over $K$, we have
$(g^{-1})^\tau g\in N_{\bf G}({\bf T})(K^{sep})$ for all $\tau\in {\rm Gal}(K^{sep}/K)$. Thus the class of the cocycle $(\xi_\tau)=((g^{-1})^\tau g)$ 
with coefficients in $N_{\bf G}({\bf T})$ is
a cohomological obstruction for the conjugacy of ${\bf T}$ and ${\bf T}'$ over $K$. Note that since ${\bf T}'=g{\bf T}g^{-1}$,
the twisted torus $^{(\xi_\tau)}{\bf T}$ is isomorphic to ${\bf T}'$ over $K$.

Next we will show that under some
additional assumptions, one can choose $g$ such that the cocycle $(\xi_\tau)$ takes values in ${\bf T}(K^{sep})\subset N_{\bf G}({\bf T})(K^{sep})$.
Note that for such a choice of $g$ we have $^{(\xi_\tau)}{\bf T}\simeq {\bf T}$. Therefore a necessary condition for this is that ${\bf T}$ and ${\bf T}'$ be isomorphic over $K$, 
since $^{(\xi_\tau)}{\bf T}\simeq {\bf T}'$. Furthermore, our claim will hold true in ${\bf G}$ if it does in ${\bf G}/{\bf Z}$ for some central subgroup ${\bf Z}$, and it therefore suffices to consider the
adjoint case.

Assume thus that ${\bf T}\simeq {\bf T}'$ and that ${\bf G}$ is adjoint. 
Let $F/K$ be the minimal splitting field of ${\bf T}$ (and hence of ${\bf T}'$) and let
$\Gamma={\rm Gal}(F/K)$. The group $\Gamma$ acts naturally on the character lattices 
$X({\bf T})_{\ast}$ and $X({\bf T}')_{\ast}$, and these actions preserve the root systems
$\Sigma=\Sigma(G,{\bf T})$ and $\Sigma'=\Sigma(G,{\bf T}')$. Thus we have two canonical embeddings
$\rho_1:\Gamma\hookrightarrow {\rm Aut}(\Sigma)$ and  
$\rho_2:\Gamma\hookrightarrow {\rm Aut}(\Sigma')$.
Since ${\bf G}$ is adjoint, $\Sigma$ and $\Sigma'$ generate $X({\bf T})_{\ast}$ and $X({\bf T}')_{\ast}$, respectively.
Since $\Sigma$ and $\Sigma'$ are root systems of the same type
we may identify them,
which in turn
gives rise to an identification $X({\bf T})_{\ast}=X({\bf T}')_{\ast}$. After all these identifications we obtain two actions of
$\Gamma$ on each of $\Sigma$ and $X({\bf T})_{\ast}$ through $\rho_1$ and $\rho_2$. 

\begin{lemma}\label{lattice}
  Assume that there is an inner automorphism $\rho: {\rm Aut}(\Sigma)\to {\rm Aut}(\Sigma)$ such that
$\rho|_{{\rm Im}\,\rho_1}=\rho_2\circ \rho_1^{-1}$.
 Then there is a $\Gamma$-equivariant  
automorphism  $X({\bf T})_{\ast}\to X({\bf T})_{\ast}$ preserving the root system $\Sigma$,
where   $\Gamma$ acts on the domain through $\rho_1$ and on the codomain through $\rho_2$.
\end{lemma}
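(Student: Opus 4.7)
The plan is to set $\phi$ equal to the element of $\mathrm{Aut}(\Sigma)$ that implements the given inner automorphism $\rho$, viewed as a lattice automorphism via the adjointness of ${\bf G}$. First I would rewrite the desired $\Gamma$-equivariance $\phi \circ \rho_1(\gamma) = \rho_2(\gamma) \circ \phi$ in the equivalent form $\phi\, \rho_1(\gamma)\, \phi^{-1} = \rho_2(\gamma)$; this says exactly that conjugation by $\phi$ carries the subgroup $\mathrm{Im}\,\rho_1 \subset \mathrm{Aut}(\Sigma)$ onto $\mathrm{Im}\,\rho_2$ via the map $\rho_2 \circ \rho_1^{-1}$, which is almost tautologically what the hypothesis provides.

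To execute this rigorously I would unpack the phrase ``inner automorphism'': by definition $\rho = \mathrm{Ad}(\eta)$ for some $\eta \in \mathrm{Aut}(\Sigma)$, and the hypothesis $\rho|_{\mathrm{Im}\,\rho_1} = \rho_2 \circ \rho_1^{-1}$ then reads
\[
\eta\, \rho_1(\gamma)\, \eta^{-1} \;=\; \rho_2(\gamma) \qquad \text{for every } \gamma \in \Gamma.
\]
A priori $\eta$ is only a linear automorphism of the ambient $\mathbb{R}$-vector space that permutes $\Sigma$, but the paragraph preceding the lemma notes that, because ${\bf G}$ is adjoint, $\Sigma$ generates the lattice $X({\bf T})_*$. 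Consequently $\eta$ restricts to a $\mathbb{Z}$-linear automorphism $\phi$ of $X({\bf T})_*$ that preserves $\Sigma$: every element of $X({\bf T})_*$ is a $\mathbb{Z}$-combination of roots, and $\phi$ acts on these via the permutation induced by $\eta$. The intertwining relation above, being satisfied on $\Sigma$, then extends to all of $X({\bf T})_*$ by $\mathbb{Z}$-linearity, which is exactly the $\Gamma$-equivariance demanded by the lemma.

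The only non-formal ingredient is the step that upgrades $\eta \in \mathrm{Aut}(\Sigma)$ to a lattice automorphism, and this is precisely where adjointness of ${\bf G}$ is essential; without it, one would in general only obtain an automorphism of $X({\bf T})_* \otimes \mathbb{Q}$ preserving $\Sigma$, not of the lattice itself. This is also why the passage to $\overline{\bf G}$ was performed just before the lemma. Apart from that observation the argument is purely a matter of unwinding definitions, so I do not anticipate any substantive obstacle.
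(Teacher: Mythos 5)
Your proof is correct and takes essentially the same route as the paper: you write the inner automorphism as $\rho = \operatorname{Int}(\eta)$ with $\eta \in \operatorname{Aut}(\Sigma)$, extend $\eta$ to the lattice $X(\mathbf{T})_*$ using the fact that adjointness makes this lattice the root lattice, and read off the equivariance $\phi \circ \rho_1(\gamma) = \rho_2(\gamma) \circ \phi$ directly from the hypothesis $\eta\,\rho_1(\gamma)\,\eta^{-1} = \rho_2(\gamma)$. The paper's proof is a compressed version of exactly this argument; your added remarks on why adjointness is the load-bearing hypothesis are accurate and simply make explicit what the paper leaves as "straightforward to check."
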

\begin{proof} Let $\rho={\rm Int}(a)$ where $a\in {\rm Aut}(\Sigma)$. The map $a:\Sigma\to \Sigma$ can be extended uniquely to an automorphism $a_{X({\bf T})_\ast}:X({\bf T})_\ast\to
X({\bf T})_\ast$ preserving roots. It is straightforward to check that  it is $\Gamma$-equivariant.
\end{proof}

We are now ready to conclude this section with the following theorem. Since we will mainly be 
concerned with outer forms of type $\mathrm A_2$, it is stated for groups of outer type.

\begin{theorem}\label{conjugacy} 
Let ${\bf G}$ be an absolutely simple  semisimple group over $K$ of outer type with 
$|{\rm Out}({\bf G})|=2$, and let ${\bf T}$ and ${\bf T}'$ be two isomorphic maximal tori in ${\bf G}$ defined over $K$, 
with corresponding root systems $\Sigma$ and
$\Sigma'$, respectively. Assume that there is an inner automorphism $\rho: {\rm Aut}(\Sigma)\to 
{\rm Aut}(\Sigma)$ such that
$\rho|_{{\rm Im}\,\rho_1}=\rho_2\circ \rho_1^{-1}$, where $\rho_1$ and $\rho_2$ are the above 
embeddings of $\Gamma$ into ${\rm Aut}(\Sigma)$. If there is $f\in {\rm Aut}({\bf G})(K)\setminus {\rm Int}({\bf G})(K)$ 
such that $f({\bf T})={\bf T}$, then 
there is $g\in {\bf G}(K^{sep})$ such that $g{\bf T}g^{-1}={\bf T}'$ and $(g^{-1})^\tau g\in {\bf T}(K^{sep})$ for all 
$\tau\in {\rm Gal}(K^{sep}/K)$.
\end{theorem}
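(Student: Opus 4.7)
The plan is to reduce the statement to the triviality of a non-abelian cohomology class $[\bar\xi^0]\in H^1(K,W)$, where $W=N_{\bf G}({\bf T})/{\bf T}$ is the Weyl group viewed as a $K$-group scheme, and then to kill this class by combining Lemma~\ref{lattice} with the outer automorphism $f$. Since the statement is invariant under central isogeny, I would first pass to the case of adjoint ${\bf G}$. Then pick any $g_0\in {\bf G}(K^{sep})$ with $g_0{\bf T}g_0^{-1}={\bf T}'$ and form the cocycle $\xi^0_\tau=(g_0^{-1})^\tau g_0\in N_{\bf G}({\bf T})(K^{sep})$. A standard twisting argument shows that if the image $[\bar\xi^0]\in H^1(K,W)$ is trivial, then multiplying $g_0$ on the right by a lift to $N_{\bf G}({\bf T})(K^{sep})$ of a coboundary witness in $W(K^{sep})$ yields an element $g$ that satisfies the theorem. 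So the task reduces to showing $[\bar\xi^0]=1$ in $H^1(K,W)$.

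Introduce the extended Weyl group $\widetilde W:=N_{{\rm Aut}({\bf G})}({\bf T})/{\bf T}$, which fits in an exact sequence $1\to W\to \widetilde W\to D\to 1$ of $K$-group schemes with $D={\rm Out}({\bf G})\simeq \mathbb{Z}/2\mathbb{Z}$ constant. The hypothesis on $f$ provides a $K$-rational section $D(K)\to \widetilde W(K)$; consequently the connecting map $D(K)\to H^1(K,W)$ in the associated cohomology sequence is trivial, so the pointed map $H^1(K,W)\to H^1(K,\widetilde W)$ has trivial kernel. Hence it is enough to prove that the image of $[\bar\xi^0]$ in $H^1(K,\widetilde W)$ is trivial.

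For this, I would use Lemma~\ref{lattice} to produce the $\Gamma$-equivariant automorphism $\psi$ of $X({\bf T})_*$ preserving $\Sigma$, where $\Gamma$ acts via $\rho_1$ on the source and via $\rho_2$ on the target. Dualizing, $\psi$ corresponds to a $K$-rational isomorphism $\alpha:{\bf T}\to{\bf T}'$ respecting the root systems. The isomorphism theorem for reductive groups extends $\alpha$ over $K^{sep}$ to an automorphism $\phi\in{\rm Aut}({\bf G})(K^{sep})$ with $\phi({\bf T})={\bf T}'$. Since $\phi|_{\bf T}=\alpha$ is $K$-rational, the element $(\phi^{-1})^\tau\phi$ fixes ${\bf T}$ pointwise for every $\tau\in\Gamma$, hence lies in $Z_{{\rm Aut}({\bf G})}({\bf T})$; in the adjoint outer-type case no diagram automorphism acts trivially on ${\bf T}$, so this centralizer equals ${\bf T}$. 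Therefore the cocycle of $\phi$ takes values in ${\bf T}(K^{sep})$, and a short comparison of $\xi^0$ and this cocycle inside $N_{{\rm Aut}({\bf G})}({\bf T})$ shows that their common image in $H^1(K,\widetilde W)$ is trivial.

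The most delicate step is this final construction of $\phi$ with controlled Galois cocycle: it rests on the adjoint reduction (which forces $Z_{{\rm Aut}({\bf G})}({\bf T})={\bf T}$) and on a careful application of the isomorphism theorem sensitive enough to pass the $K$-rationality of the torus-level datum $\alpha$ through to control of the full ${\rm Aut}({\bf G})(K^{sep})$-valued cocycle of $\phi$.
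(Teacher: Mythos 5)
Your argument is correct, and it uses the same two essential ingredients as the paper --- Lemma~\ref{lattice} together with the isomorphism theorem to build an automorphism $\phi$ of ${\bf G}$ over $K^{sep}$ extending a $K$-rational isomorphism $\alpha:{\bf T}\to{\bf T}'$ respecting the root systems, and the outer automorphism $f$ to handle the ``outer'' obstruction --- but it packages them into a genuinely different logical structure. The paper's proof is direct and avoids $H^1$ entirely: it observes that $a_{\bf G}$ (your $\phi$) can be replaced by $a_{\bf G}\circ f$ if necessary so as to be inner, say $\mathrm{Int}(g)$, and then the $K$-rationality of $\mathrm{Int}(g)|_{\bf T}=a\circ f|_{\bf T}$ forces $(g^{-1})^\tau g$ to centralize ${\bf T}$, hence to lie in ${\bf T}(K^{sep})$. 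There is no twisting, no $W$, no $\widetilde W$. You instead reformulate the desired conclusion as the vanishing of $[\bar\xi^0]\in H^1(K,W)$, introduce the extension $1\to W\to\widetilde W\to D\to1$, use $f$ only to deduce that $\widetilde W(K)\twoheadrightarrow D(K)$ and hence that $H^1(K,W)\to H^1(K,\widetilde W)$ has trivial kernel, and then use $\phi$ to show $[\bar\xi^0]$ dies in $H^1(K,\widetilde W)$. Both arguments are sound; the paper's is shorter and produces $g$ explicitly, while your formulation isolates exactly which nonabelian cohomology vanishing is at stake and makes the role of the two hypotheses (Lemma~\ref{lattice} feeding the $\widetilde W$-triviality, $f$ feeding the kernel statement) cleanly orthogonal. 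One small remark on bookkeeping: when you write ``multiplying $g_0$ on the right by a lift of a coboundary witness'', the sign conventions require $g=g_0 n^{-1}$ rather than $g_0 n$ with the paper's convention $\xi_\tau=(g_0^{-1})^\tau g_0$; this is harmless but worth making explicit, as is the verification that $\bar\xi^0$ and the (trivial) image of $(\phi^{-1})^\tau\phi$ are cohomologous via $\bar n$ with $n=\mathrm{Int}(g_0)^{-1}\phi\in N_{\mathrm{Aut}({\bf G})}({\bf T})(K^{sep})$.
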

\begin{proof}
 Without loss of generality we may assume that ${\bf G}$ is adjoint. 
The assumptions of Lemma \ref{lattice} are satisfied. Let thus $$a_{X({\bf T})_\ast}:X({\bf T})_\ast\to X({\bf T})_\ast$$ 
be the $\Gamma$-equivariant map constructed in that lemma. 
Using the identification of $X({\bf T})_\ast$ and $X({\bf T}')_\ast$, we obtain a  $\Gamma$-equivariant 
map $X({\bf T})_\ast\to X({\bf T}')_\ast$, which can be extended
to a $K$-isomorphism $a:{\bf T}\to {\bf T}'$ that induces an isomorphism between $\Sigma$ and $\Sigma'$.
By~\cite[Theorem$'$ 32.1]{Humphreys}
the map $a_{\bf T}$ can be further extended to an automorphism $a_{\bf G}:{\bf G} \to {\bf G}$.  Replacing  
$a_{\bf G}$ with $a_{\bf G}\circ f$, if necessary,
we may assume that $a_{\bf G}$ is inner, say $a_{\bf G}={\rm Int}(g)$ where $g\in {\bf G}(K^{sep})$. 
Since ${\rm Int}(g)|_{\bf T}: {\bf T} \to {\bf T}'$ is a $K$-isomorphism and ${\rm Int}((g^{-1})^\tau g)$
fixes $\Sigma$, it follows that $(g^{-1})^\tau g\in {\bf T}(K^{sep})$ for all 
$\tau\in {\rm Gal}(K^{sep}/K)$.
\end{proof}
\begin{example}\label{typeA_2}
We keep the above notation.  Let $E/K$ be a quadratic \'etale extension and let $B$ be a central simple algebra 
of degree $3$ over $E$ equipped with an involution $\sigma$ of the second kind. Consider
two isomorphic cubic subfields $L,L'\subset B_\sigma$ where $B_\sigma\subset B$ is the subset consisting
of $\sigma$-invariant elements. Since the maximal subfields $L\cdot E$ and $L'\cdot E$ of $B$ are $\sigma$-stable,
they give rise to two maximal $K$-tori ${\bf T}$ and ${\bf T}'$ in ${\bf G}={\bf SU}(B,\sigma)$, given by
$$
{\bf T}=\{ x\in L\cdot E \ | \ \sigma(x)x=1,\ \ {\rm Nrd}(x)=1\};
$$
$$
{\bf T}'=\{ x\in L'\cdot E \ | \ \sigma(x)x=1,\ \ {\rm Nrd}(x)=1\}.
$$   
Clearly,  ${\bf T}\simeq {\bf T}'$ and the Galois group $\Gamma$ of the minimal splitting field of ${\bf T}$ (and hence of ${\bf T}'$) 
is of order
divisible by $6$. Since ${\bf SU}(B,\sigma)$ is of type $\mathrm{A}_2$, we have
$W(\mathrm{A}_2)\simeq S_3$, and the automorphism group of its root system $\Sigma$, 
$${\rm Aut}(\Sigma)\simeq W(\mathrm{A}_2)\times \mathbb{Z}/2\simeq S_3\times \mathbb{Z}/2,
$$
is of order $12$. Thus $\Gamma$ has order $6$ or $12$.

\smallskip

\noindent
{\it Case $1$}: $|\Gamma|=12$. Then ${\rm Im}\,\rho_1$ and ${\rm Im}\,\rho_2$ coincide
with ${\rm Aut}(\Sigma)$.  Note that $$\rho_2\circ \rho_1^{-1}:{\rm Aut}(\Sigma)\to {\rm Aut}(\Sigma)$$ 
preserves the Weyl group $W(\mathrm{A}_2)$ since 
$\rho^{-1}_1(W(\mathrm{A}_2))$ (resp. $\rho_2^{-1}(W(\mathrm{A}_2))$) coincides with 
${\rm Gal}(F/E) < {\rm Gal}(F/K)$, where $F/K$
is the Galois closure of $L\cdot E/K$. 
Hence $\rho_2\circ \rho_1^{-1}$
obviously satisfies all the assumptions in Theorem~\ref{conjugacy}, and the map $f(x)=\sigma(x)^{-1}$
is an outer automorphism of ${\bf G}$ over $K$ preserving ${\bf T}$.
   
\smallskip

\noindent
{\it Case $2$}: $|\Gamma|=6$. The automorphism group ${\rm Aut}(\Sigma)$ has 3 subgroups of order
$6$, namely $\Gamma_1=W(\mathrm{A}_2)=S_3\times 0$,  the subgroup $\Gamma_2\subset S_3\times 
\mathbb{Z}/2$ 
generated by the two elements $((123),0)$ and $((12),1)$, where $(123)$ and  $(12)$ are standard
cycles in $S_3$, and the cyclic subgroup $\Gamma_3\subset S_3\times \mathbb{Z}/2$ of order $6$  
generated by the two elements
$((123),0)$ and $({\rm Id},1)$.

Since ${\bf G}$ has outer type, we know from \cite[Lemma 4.1]{PR} that $\Gamma$ does not embed into
$\Gamma_1=W(\mathrm{A}_2)\simeq S_3$.
If $\rho_1(\Gamma)= \Gamma_2\simeq S_3$, then ${\rm Im}\,\rho_1={\rm Im}\,\rho_2$, and since every automorphism of $\Gamma_2$ is obviously inner,
the automorphism $\rho_2\circ \rho_1^{-1}$ of $\Gamma_2$ can be extended to an inner automorphism of ${\rm Aut}(\Sigma)$. If instead
$\rho_1(\Gamma)= \Gamma_3\simeq \mathbb{Z}/3\times \mathbb{Z}/2$,
then again ${\rm Im}\,\rho_1={\rm Im}\,\rho_2$.
The group $\Gamma_3$ has a unique nontrivial automorphism  given by $x\mapsto x^{-1}$, and one 
easily checks that it  is the restriction of an inner automorphism of ${\rm Aut}(\Sigma)$.

Thus in all cases, the hypothesis of Theorem~\ref{conjugacy} are satisfied. 
\end{example}

\section{Subgroups of the Automorphism Group of an Albert Algebra}
In this section, we will study automorphisms of Albert algebras related to 9-dimensional subalgebras. The main result of this section
is the rationality, hence $R$-triviality, of the group of all automorphisms stabilizing a 9-dimensional subalgebra.

\subsection{$9$-dimensional Subalgebras and Their Automorphisms}\label{9-dimensional}
Let $A$ be an arbitrary  division Albert algebra over  $K$.  By~\cite[37.12 (2)]{KMRT}, 
any proper nontrivial subalgebra of $A$ is either 
a cubic field extension $K\subset L\subset A$ or a $9$-dimensional subalgebra $K\subset S \subset A$.
Furthermore, $S$ is of the form $S=D^+$ where $D$ is a central simple algebra of degree $3$ over $K$ or
$S=B_\sigma^+$
where $B$ is a central division algebra of degree $3$ over a quadratic field extension $E/K$ equipped
with an involution $\sigma$ of the second kind. For later use we record some facts related to automorphism 
groups of $D^+,B_\sigma^+$ and their extensions to automorphisms of $A$.

First, let $S=D^+$. By \cite[39.14 (2)]{KMRT}, the algebra $A$ has a presentation $A=D\oplus D\oplus D$ (as a vector space) where the subalgebra $S$ coincides with the first
component. By~\cite[37.7]{KMRT}, we have an exact sequence 
$$
1\longrightarrow {\bf Aut}(D) \longrightarrow {\bf Aut}(D^+) \longrightarrow \mathbb{Z}/2\longrightarrow 1,
$$
where the first term is the automorphism group of $D$ as an associative algebra, and this sequence is 
split  if and only if
$D$ is split.
Since $D$ is a division algebra, any $K$-automorphism of $D^+$ thus comes from ${\bf Aut}(D)(K)={\rm Aut}(D)$, 
i.e.\ is given
by conjugation $x\mapsto dxd^{-1}$ for some $d\in D^\times$. Moreover, such an automorphism can be extended to $A$
by the formula
$$
(x,y,z) \mapsto ( gxg^{-1}, gyg^{-1}, gzg^{-1}).
$$
Thus the sequence implies that $${\bf Aut}(D^+)^\circ\simeq {\bf PGL}(1,D),$$ 
where $^\circ$ denotes the identity connected component.
Note that ${\bf PGL}(1,D)$  is rational, hence in particular $R$-trivial.

Next, let $S=B_\sigma^+$.  Here the situation is completely analogous to that of $S=D^+$. Namely,
by~\cite[39.18 (2)]{KMRT}, the algebra $A$ admits the presentation $A=B_\sigma^+\oplus B$ as a vector space, with the first component a subalgebra. 
By~\cite[37.B]{KMRT},  the algebraic group  ${\bf Aut}(B^+_\sigma)$ is smooth and
$${\bf Aut}(B_\sigma^+)^\circ\simeq {\bf PGU}(B,\sigma).$$
Passing to the quadratic field extension $E/K$ we conclude that 
$$
{\bf Aut}(B^+_\sigma)(K)={\bf Aut}(B^+_\sigma)^\circ(K).
$$
Thus
any $K$-automorphism of $B_\sigma^+$ is given by conjugation $x\mapsto bxb^{-1}$ for some 
$b\in B^\times$ satisfying $b\sigma(b)\in K$. 
Since ${\bf PGU}(B,\sigma)$  has rank 
$2$ it is rational, hence $R$-trivial. In Corollary~\ref{lifting} below we shall see that any $K$-automorphism of $B_\sigma^+$ can be  extended to a $K$-automorphism of $A$.

\subsection{The Group ${\bf Aut}(A/B_\sigma^+)$}\label{Aut(A/B)}
Note that if $E=K\times K$ and $B=D\otimes_K E$ with the flip involution $\sigma$, then $B_\sigma^+$ is equal to $D^+$ embedded diagonally into $B$.
This provides a unified treatment of both kinds of 9-dimensional subalgebras. Therefore, here and everywhere below in this section, we will
let $E/K$ be a quadratic \'etale extension, including the possibility of $E$ being split; doing so, any 9-dimensional subalgebra of $A$ is of the form $B_\sigma^+$.

As above, let $A=B_\sigma^+\oplus B$.
Let $${\bf Aut}(A/B_\sigma^+)\subset {\bf Aut}(A)={\bf H}$$ be the closed subgroup consisting of the automorphisms
of $A$  fixing $B_\sigma^+$
pointwise.  One knows (see~\cite[39.B]{KMRT}) that 
the algebraic group ${\bf Aut}(A/B_\sigma^+)$ is connected simple simply connected of type  $\mathrm{A}_2$. Hence
$${\bf Aut}(A/B_\sigma^+)\simeq {\bf SU}(B,\tau)$$ 
where $\tau$ is some involution of the second kind on $B$, which in general is different from $\sigma$. 
Being an algebraic group of rank $2$ this group is rational, hence $R$-trivial.

\subsection{The Group ${\bf Aut}(A,B_\sigma^+)$}\label{secondgroup}  
Let
$$
{\bf Aut}(A,B_\sigma^+)\subset {\bf Aut}(A)={\bf H}$$ be the closed subgroup consisting of the automorphisms of $A$ stabilizing 
$B_\sigma^+$. By~\cite[Proposition 39.16]{KMRT}, we have an exact sequence
$$
1\longrightarrow {\bf Aut}(A/B_\sigma^+)\longrightarrow {\bf Aut}(A,B_\sigma^+)\longrightarrow {\bf Aut}(B_\sigma^+)
\longrightarrow 1.
$$ 
Moreover, by~\cite[39.12]{KMRT},
$$
{\bf Aut}(A,B_\sigma^+)^\circ(K)={\bf Aut}(A,B_\sigma^+)(K).
$$ 
Furthermore, the above sequence induces the exact sequence
\begin{equation}\label{connected}
1\longrightarrow {\bf Aut}(A/B_\sigma^+)\longrightarrow {\bf Aut}(A,B_\sigma^+)^{\circ}\longrightarrow {\bf Aut}(B_\sigma^+)^{\circ}
\longrightarrow 1.
\end{equation} 
Thus,
$$
{\bf Aut}(A,B_\sigma^+)^\circ/{\bf Aut}(A/B_\sigma^+)\simeq {\bf Aut}(B_\sigma^+)^{\circ}\simeq {\bf PGU}(B,\sigma)\simeq {\bf SU}(B,\sigma)/{\bf Z},
$$
where  ${\bf Z}\subset {\bf SU}(B,\sigma)$ is the
centre.

From the point of view algebraic groups, 
(\ref{connected}) implies that
the algebraic group ${\bf G}:={\bf Aut}(A,B_\sigma^+)^\circ$  is semisimple and is an almost direct product of 
two simple simply connected groups of type $\mathrm{A}_2$: the first is  
$$
{\bf G}_1:={\bf Aut}(A/B_\sigma^+)={\bf SU}(B,\tau)
$$
and the second is isomorphic to ${\bf G}_2:={\bf SU}(B,\sigma)$. The centres ${\bf Z}_1$ and ${\bf Z}_2$ of ${\bf G}_1$ and ${\bf G}_2$, respectively, are both
isomorphic to ${\bf Z}=R_{E/K}^{(1)}(\boldsymbol{\mu}_3)$, and ${\bf G}_1\cap {\bf G}_2={\bf Z}$ (see \cite[39.12]{KMRT}). 
Thus we have the exact sequence
\begin{equation}\label{sequence1}
1\longrightarrow {\bf Z} \longrightarrow {\bf G}_1\times {\bf G}_2 \stackrel{\phi}{\longrightarrow} G \longrightarrow 1,
\end{equation}
where ${\bf Z}$ is embedded \emph{codiagonally}, i.e.\ via $z\mapsto (z,z^{-1})$.
Identifying the image $\phi({\bf G}_1\times 1)\subset {\bf G}$ with ${\bf G}_1$, we recover the sequence~(\ref{connected}) in the form
\begin{equation}\label{sequence2}
1\longrightarrow {\bf G}_1 \longrightarrow  {\bf G} \stackrel{\psi}{\longrightarrow}  {\bf G}/{\bf G}_1 \longrightarrow 1.
\end{equation}     
Note that ${\bf G}/{\bf G}_1\simeq {\bf G}_2/{\bf Z}$.

\subsection{Rationality of ${\bf Aut}(A,B_\sigma^+)^\circ$}

We keep the notation introduced in Section \ref{secondgroup}. 
\begin{proposition}\label{G}
The group ${\bf G}={\bf Aut}(A,B_\sigma^+)^\circ$ is rational over $K$, hence $R$-trivial.
\end{proposition}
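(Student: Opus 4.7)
The plan is to establish the rationality of $\mathbf{G}=\mathbf{Aut}(A,B_\sigma^+)^\circ$ by splitting the exact sequence \eqref{sequence2}
\[
1 \longrightarrow \mathbf{G}_1 \longrightarrow \mathbf{G} \longrightarrow \mathbf{G}/\mathbf{G}_1 \longrightarrow 1
\]
birationally. Both outer terms, $\mathbf{G}_1=\mathbf{SU}(B,\tau)$ and $\mathbf{G}/\mathbf{G}_1\simeq\mathbf{PGU}(B,\sigma)$, are semisimple of outer type $\mathrm{A}_2$ and rank~$2$, and are therefore $K$-rational (a fact already invoked in Sections \ref{9-dimensional} and \ref{Aut(A/B)}). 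If a rational section $s\colon \mathbf{G}/\mathbf{G}_1 \dashrightarrow \mathbf{G}$ of the quotient map exists, the morphism $(h,\bar g)\mapsto h\cdot s(\bar g)$ defines a birational equivalence $\mathbf{G}_1\times(\mathbf{G}/\mathbf{G}_1)\dashrightarrow \mathbf{G}$; the source is rational as a product of rational varieties, and so $\mathbf{G}$ is rational. The entire problem is therefore reduced to producing such a rational section.

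To construct $s$ I would use the codiagonal presentation \eqref{sequence1} to identify the $\mathbf{G}_1$-torsor $\pi\colon \mathbf{G}\to \mathbf{G}/\mathbf{G}_1 \simeq \mathbf{G}_2/\mathbf{Z}$ as the pushout of the tautological $\mathbf{Z}$-torsor $\mathbf{G}_2\to \mathbf{G}_2/\mathbf{Z}$ along the central inclusion $\mathbf{Z}\hookrightarrow \mathbf{G}_1$. Restricting to the generic point $\eta=\operatorname{Spec} K(\mathbf{G}/\mathbf{G}_1)$, the class $[\pi_\eta]$ is then the image of a class in $H^1(\eta,\mathbf{Z})$ under the map $H^1(\eta,\mathbf{Z}) \to H^1(\eta,\mathbf{G}_1)$ induced by that inclusion. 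To show the image vanishes, I would identify its kernel, via the exact sequence $1\to\mathbf{Z}\to\mathbf{SU}(B,\tau)\to \mathbf{PGU}(B,\tau)\to 1$, with the image of the connecting map $\mathbf{PGU}(B,\tau)(\eta)\to H^1(\eta,\mathbf{Z})$ (the outer-type analogue of Example~\ref{typeA_n}), and argue that the generic class lies in this image because the two involutions $\sigma,\tau$ on $B$ differ by an inner automorphism, so that the multiplier maps for $\mathbf{PGU}(B,\sigma)$ and $\mathbf{PGU}(B,\tau)$ have the same image in $H^1(\eta,\mathbf{Z})$.

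The principal obstacle is this last cohomological vanishing: pinning down the generic connecting class and verifying that it belongs to the above kernel. A possibly cleaner alternative route, bypassing cohomology entirely, would exploit the explicit second-Tits-construction description $A=J(B,\sigma,u,\mu)$ to parametrize $\mathbf{G}(R)$ by pairs $(b,\beta)\in (B\otimes R)^\times \times (B\otimes R)^\times$ subject to polynomial compatibility conditions, and then verify directly that this parametrization is a birational equivalence between $\mathbf{G}$ and an open subset of an affine space over $K$.
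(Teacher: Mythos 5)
Your strategy is in essence the same as the paper's: reduce rationality of $\mathbf{G}$ to showing that the quotient map in \eqref{sequence2} has a rational section, observe that the $\mathbf{G}_1$-torsor $\mathbf{G}\to\mathbf{G}/\mathbf{G}_1$ is induced from a $\mathbf{Z}$-torsor via the codiagonal sequence \eqref{sequence1}, and reduce the problem to showing that a class in $H^1(F,\mathbf{Z})$ that dies in $H^1(F,\mathbf{G}_2)$ also dies in $H^1(F,\mathbf{G}_1)$. Up to this point your reduction is correct, and it matches what the paper does when it translates the rational-section problem into the vanishing of $\mathrm{Ker}\,[H^1(F,\mathbf{G}_1)\to H^1(F,\mathbf{G})]$ and then passes to $[\lambda]\in H^1(F,\mathbf{Z})$.

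However, there is a genuine gap at exactly the place you flag as the ``principal obstacle.'' Your proposed resolution --- that since $\sigma$ and $\tau$ differ by an inner automorphism of $B$, the images of the connecting maps $\mathbf{PGU}(B,\sigma)(\eta)\to H^1(\eta,\mathbf{Z})$ and $\mathbf{PGU}(B,\tau)(\eta)\to H^1(\eta,\mathbf{Z})$ coincide --- is not a valid argument as stated. Two involutions of the second kind on the same algebra $B$ always differ by an inner automorphism, yet $\mathbf{SU}(B,\sigma)$ and $\mathbf{SU}(B,\tau)$ are in general \emph{not} isomorphic as $K$-groups (they correspond to inequivalent hermitian forms), and there is no canonical homomorphism between them. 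So one cannot conclude a priori that their kernels $\mathrm{Ker}\,[H^1(F,\mathbf{Z})\to H^1(F,\mathbf{SU}(B,\cdot))]$ agree. This is precisely what the paper has to prove, and it does so by a two-case argument that you do not reproduce: when $E$ is split, both $\mathbf{G}_1$ and $\mathbf{G}_2$ become $\mathbf{SL}(1,D_i)$ with $D_2\in\{D_1,D_1^{\mathrm{op}}\}$, so the kernels are both $\mathrm{Nrd}(D_1^\times)/F^{\times 3}$; when $E$ is a field, the paper passes to $F'=F\cdot E$ (where one is back in the split case), and then descends using Gille's norm principle (Theorem~\ref{normprinciple}) together with the fact that $\mathrm{cor}\circ\mathrm{res}$ is multiplication by $[F':F]=2$, which is invertible mod the exponent $3$ of $\mathbf{Z}$. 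Without this restriction--corestriction/norm-principle step, the equality of images you assert remains unproved. Your suggested alternative (explicit parametrization via the second Tits construction) is a different route and may well be viable, but it is only sketched.
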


\begin{proof}
Consider the exact sequences ~(\ref{sequence1}) and (\ref{sequence2}).
The two groups ${\bf G}_1$ and ${\bf G}_2/{\bf Z}$ in (\ref{sequence2}), being groups of rank $2$, are rational over $K$. Therefore, it
suffices to show that $\psi$ has a rational section.
This is equivalent to proving that
$$
{\rm Ker}\,[H^1(F,{\bf G}_1) \longrightarrow H^1(F,{\bf G})]=1
$$
for all field extensions $F/K$. 

Fix a field extension $F/K$ and let $[\xi]\times 1\in H^1(F,{\bf G}_1\times 1)$ be a class whose image in $H^1(F,{\bf G})$ is trivial. 
From (\ref{sequence1}) it follows that
there is $[\lambda]\in H^1(F,{\bf Z})$ whose image in $H^1(F, {\bf G}_1\times {\bf G}_2)$ is $[\xi]\times 1$. Since ${\bf Z}$ is embedded codiagonally
into ${\bf G}_1\times {\bf G}_2$, the image of $[\lambda]$ under
the natural map $$H^1(F,{\bf Z})\to H^1(F,{\bf G}_2)$$ is trivial. We distinguish two cases.

\smallskip

\noindent
{\it Case $1$}: The quadratic  \'etale extension $E/K$ is split, i.e. $E=K\times K$. Then 
up to $K$-isomorphism we may assume that ${\bf G}_1={\bf SL}(1,D_1)$ and
${\bf G}_2={\bf SL}(1,D_2)$, where $D_1$ and $D_2$ are central simple algebras over $K$ of degree $3$, and either
$D_2=D_1$ or $D_2=D_1^{\rm op}$. In both cases their centres are $\boldsymbol{\mu}_3$, whence
$H^1(F,{\bf Z})\simeq F^\times /F^{\times 3}$, so that the class $[\lambda]\in H^1(F,{\bf Z})$ is represented by some 
$f\in F^\times$. The fact that $$H^1(F,{\bf G}_2)\simeq F^\times/ {\rm Nrd}(D_2^\times)$$ together with the image
of $[\lambda]$ in $H^1(F,{\bf G}_2)$ being trivial then imply that $f$ is a reduced norm in $D_2$,  
and hence also in $D_1$. This implies that the image $[\xi]$ of $[\lambda]$ in $H^1(F,{\bf G}_1)$ is trivial which completes the proof in this case.
\smallskip

\noindent
{\it Case $2$}: $E/K$ is a separable field extension. Let $F'=F\cdot E$, and consider the class $[\lambda]^2\in H^1(F,{\bf Z})$. From Case $1$ we know that
$res_F^{F'}([\lambda]^2)$ is contained in ${\rm Ker}\,[H^1(F',{\bf Z})\to H^1(F',{\bf G}_1)]$. It follows from 
Example~\ref{typeA_n} and the norm
principle (Theorem~\ref{normprinciple}) that 
$$
[\lambda]=[\lambda]^4=cor^{F'}_F(res_F^{F'}([\lambda]^2)
$$ 
is contained
in  ${\rm Ker}\,[H^1(F,{\bf Z})\to H^1(F,{\bf G}_1)]$, and the proof is complete. 
\end{proof}

As a direct consequence of our proof we have the following.
\begin{corollary}\label{lifting}  
For any field extension $F/K$, the canonical map ${\bf G}(F) \to ({\bf G}_2/{\bf Z})(F)$
is surjective. In other words, if $A_F$ is a division Albert algebra, then
$${\bf Aut}(A,B_\sigma^+)(F) \longrightarrow {\bf Aut}(B_\sigma^+)(F)$$
is surjective.
\end{corollary}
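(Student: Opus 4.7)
The plan is to derive the corollary as an essentially formal consequence of the kernel computation carried out inside the proof of Proposition~\ref{G}, by reading off the long exact cohomology sequence attached to (\ref{sequence2}).

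First, I would fix a field extension $F/K$ and apply Galois cohomology to the short exact sequence
\[
1\longrightarrow {\bf G}_1 \longrightarrow {\bf G} \stackrel{\psi}{\longrightarrow} {\bf G}/{\bf G}_1 \longrightarrow 1
\]
of $K$-groups. This yields the standard exact sequence of pointed sets
\[
{\bf G}(F)\xrightarrow{\psi_F}({\bf G}/{\bf G}_1)(F)\xrightarrow{\delta}H^1(F,{\bf G}_1)\longrightarrow H^1(F,{\bf G}).
\]
The image of $\delta$ coincides with the kernel of the last map, i.e. with $\mathrm{Ker}[H^1(F,{\bf G}_1)\to H^1(F,{\bf G})]$.

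Second, I would invoke exactly what was established during the proof of Proposition~\ref{G}: for every field extension $F/K$ one has
\[
\mathrm{Ker}[H^1(F,{\bf G}_1)\longrightarrow H^1(F,{\bf G})]=1.
\]
Thus $\delta$ is the trivial map, and by exactness $\psi_F:{\bf G}(F)\to ({\bf G}/{\bf G}_1)(F)$ is surjective. Using the identification ${\bf G}/{\bf G}_1\simeq {\bf G}_2/{\bf Z}$ recorded in Section~\ref{secondgroup}, this gives the first assertion ${\bf G}(F)\twoheadrightarrow ({\bf G}_2/{\bf Z})(F)$.

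Third, I would translate this back to automorphisms of $A_F$. Under the hypothesis that $A_F$ is a division Albert algebra, the discussion in Section~\ref{9-dimensional} yields ${\bf Aut}(B_\sigma^+)(F)={\bf Aut}(B_\sigma^+)^\circ(F)\simeq {\bf PGU}(B,\sigma)(F)=({\bf G}_2/{\bf Z})(F)$, while by Section~\ref{Aut(A/B)} one also has ${\bf Aut}(A,B_\sigma^+)(F)={\bf Aut}(A,B_\sigma^+)^\circ(F)={\bf G}(F)$. Substituting these identifications into the surjection $\psi_F$ gives the claim that ${\bf Aut}(A,B_\sigma^+)(F)\to {\bf Aut}(B_\sigma^+)(F)$ is onto. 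There is no real obstacle here: the corollary is a direct consequence of the vanishing of the relevant cohomological kernel, which is already the main content of Proposition~\ref{G}.
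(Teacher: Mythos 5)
Your argument is correct and is precisely what the paper means by "As a direct consequence of our proof we have the following": the vanishing of $\mathrm{Ker}[H^1(F,{\bf G}_1)\to H^1(F,{\bf G})]$ established in the proof of Proposition~\ref{G}, read through the cohomology sequence of~(\ref{sequence2}), gives the surjectivity, and the identifications of $F$-points of ${\bf Aut}(A,B_\sigma^+)$ and ${\bf Aut}(B_\sigma^+)$ with their connected components do the rest. (Minor note: the equality ${\bf Aut}(A,B_\sigma^+)(F)={\bf Aut}(A,B_\sigma^+)^\circ(F)$ is recorded in Section~\ref{secondgroup}, not Section~\ref{Aut(A/B)}.)
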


\section{Subgroups of the Structure Group of an Albert Algebra}
We now turn to the structure group and consider, along the same lines as in the previous section, subgroups of it related to 9-dimensional subalgebras. We continue using 
the convention
that the quadratic \'etale algebra $E$ involved in the definition of $B_\sigma^+$ may be split.

\subsection{The Group ${\bf Str}(B_\sigma^+)$}\label{StrB}
The structure group ${\bf Str}(B_\sigma^+)$ of the Jordan algebra $B_\sigma^+$
is the subgroup of ${\bf GL}(B_\sigma^+)$ consisting of all similitudes. More precisely, for any $K$-ring $R$,
$$
{\bf Str}(B_\sigma^+)(R) = \{x \in {\bf GL}(B_\sigma^+)(R) \ | \  {\rm Nrd}(x(b)) = \nu(x){\rm Nrd}(b) \ \   \forall b\in B_\sigma^+\otimes_K R\}
$$
where the \textit{multiplier} $\nu(x)\in R^\times$ depends only on $x$. By \cite[Chap. V, Thm. 5.12.10]{Jacobson}, the group ${\rm Str}(B_\sigma^+)={\bf Str}(B_\sigma^+)(K)$ consists of the linear maps of the form
$$
B_\sigma^+\longrightarrow B_\sigma^+,\ \  x \mapsto \lambda bx\sigma(b),
$$
where $b\in B^\times$ and $\lambda\in K^\times$. It follows that  
 we have a surjective
map of algebraic groups 
$$
{\bf G}_m \times R_{E/K}({\bf GL}(1,B)) \longrightarrow {\bf Str}(B_\sigma^+)^\circ, $$ 
 and that 
${\bf Str}(B_\sigma^+)^\circ(K)={\bf Str}(B_\sigma^+)(K)$. The kernel of this map is the torus $R_{E/K}({\bf G}_{m,E})$, where the embedding 
 $$
 R_{E/K}({\bf G}_{m,E})\hookrightarrow {\bf G}_m \times R_{E/K}({\bf GL}(1,B))
 $$
 is given by $x\mapsto (N_{E/K}(x^{-1}),x)$. Thus, we have the exact sequence
\begin{equation}\label{sequence3}
1\longrightarrow R_{E/K}({\bf G}_{m,E}) \longrightarrow 
{\bf G}_m \times R_{E/K}({\bf GL}(1,B)) \stackrel{\phi}{\longrightarrow} {\bf Str}(B_\sigma^+)^\circ
\longrightarrow 1.
\end{equation}
\begin{lemma}\label{Str(B)} 
The group ${\bf Str}(B_\sigma^+)^\circ$ is rational, hence in particular $R$-trivial.
\end{lemma}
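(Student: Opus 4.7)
The proof I have in mind uses the exact sequence~(\ref{sequence3})
$$1\longrightarrow T\longrightarrow H\stackrel{\phi}{\longrightarrow} {\bf Str}(B_\sigma^+)^\circ\longrightarrow 1,$$
with $T:=R_{E/K}({\bf G}_{m,E})$ quasi-trivial and $H:={\bf G}_m\times R_{E/K}({\bf GL}(1,B))$. First I would verify that $H$ is $K$-rational: the scheme ${\bf GL}(1,B)$ is a Zariski open subvariety of the affine space $B\cong\mathbb{A}^9_E$, so is $E$-rational; hence $R_{E/K}({\bf GL}(1,B))$ is Zariski open in $R_{E/K}(B)\cong\mathbb{A}^{18}_K$, and therefore $K$-rational. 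Multiplying by the rational group ${\bf G}_m$ preserves rationality, so $H$ is $K$-rational of dimension $19$.

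Since $T$ is quasi-trivial, it is special in the sense of Grothendieck--Serre; every $T$-torsor is therefore Zariski-locally trivial. Applied to the principal $T$-bundle $\phi$, this furnishes a Zariski open $U\subset{\bf Str}(B_\sigma^+)^\circ$ and an isomorphism $\phi^{-1}(U)\simeq U\times T$, giving the birational equivalence ${\bf Str}(B_\sigma^+)^\circ\times T\sim H$. As $H$ and $T$ are $K$-rational (the latter being a Zariski open in $\mathbb{A}^2_K$), this already shows that ${\bf Str}(B_\sigma^+)^\circ$ is stably $K$-rational. In particular, the $R$-triviality part of the conclusion follows at once: the surjection $H(F)\twoheadrightarrow {\bf Str}(B_\sigma^+)^\circ(F)$, valid for every field extension $F/K$ thanks to Hilbert 90 for quasi-trivial tori, implies that ${\bf Str}(B_\sigma^+)^\circ(F)/R$ is a quotient of $H(F)/R=1$.

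To obtain genuine $K$-rationality, I would construct an explicit rational section of $\phi$ by inverting the parametrization $(\lambda,b)\mapsto\phi_{\lambda,b}$, $\phi_{\lambda,b}(y)=\lambda b y\sigma(b)$. Extend $\phi\in{\bf Str}(B_\sigma^+)^\circ$ to $B$ via the identification $B_\sigma^+\otimes_K E=B$. For a fixed generic $e\in B$, a direct calculation gives
$$\phi(xe)\phi(e)^{-1}=bxb^{-1}=\mathrm{Int}(b)(x)\qquad (x\in B),$$
defining a rational map ${\bf Str}(B_\sigma^+)^\circ\dashrightarrow R_{E/K}({\bf PGL}(1,B))$. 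Composing with a rational section of the scalar quotient $R_{E/K}({\bf GL}(1,B))\to R_{E/K}({\bf PGL}(1,B))$ (for instance, by normalizing a distinguished coordinate of $b$, using the rationality of $R_{E/K}({\bf GL}(1,B))$) lifts $\mathrm{Int}(b)$ rationally to $b\in R_{E/K}({\bf GL}(1,B))$, and $\lambda\in{\bf G}_m$ is then recovered from $\phi(1)=\lambda b\sigma(b)$ by division in $B$, noting that $\phi(1)$ and $b\sigma(b)$ both lie in $B_\sigma^+$ and differ by a $K$-scalar. The resulting rational section yields the required birational inverse to $\phi$.

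The principal obstacle is precisely this last step: promoting stable rationality to rationality requires the rational elimination of the central $E^\times$-ambiguity in the parametrization $(\lambda,b)\mapsto\phi_{\lambda,b}$, which is accomplished by the coordinate normalization in $R_{E/K}({\bf GL}(1,B))$. The preceding two steps involving rationality of $H$ and quasi-triviality of $T$ are formal.
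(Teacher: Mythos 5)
Your first paragraph is correct and gives a clean, independent argument for stable rationality and hence $R$-triviality (which is in fact all that the paper uses downstream), but it is weaker than the claimed rationality. The paper proves actual rationality by a different decomposition: it observes that $\phi({\bf G}_m\times 1)\cong{\bf G}_m$ sits \emph{inside} ${\bf Str}(B_\sigma^+)^\circ$ as a central torus, and that the quotient is
$$
{\bf Str}(B_\sigma^+)^\circ/{\bf G}_m\;\simeq\; R_{E/K}({\bf GL}(1,B))/R_{E/K}({\bf G}_{m,E})\;=\;R_{E/K}({\bf PGL}(1,B)),
$$
which is rational (being the Weil restriction of an open subvariety of $\mathbb{P}(B)\cong\mathbb{P}^8_E$). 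Since ${\bf G}_m$ is special, the ${\bf G}_m$-torsor ${\bf Str}(B_\sigma^+)^\circ\to{\bf Str}(B_\sigma^+)^\circ/{\bf G}_m$ has a rational section, giving a birational equivalence ${\bf Str}(B_\sigma^+)^\circ\sim {\bf G}_m\times R_{E/K}({\bf PGL}(1,B))$ and hence rationality in one stroke. You instead decompose $H$ (not ${\bf Str}$) by the quasi-trivial kernel $T$, which sits on the wrong side of the extension and only yields a birational product ${\bf Str}\times T\sim H$, i.e.\ stable rationality.

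Your final paragraph contains a genuine logical slip: a rational section of $\phi:H\to{\bf Str}(B_\sigma^+)^\circ$ cannot be ``the required birational inverse to $\phi$,'' since $\phi$ has a two-dimensional kernel and hence no birational inverse at all. What a rational section $s$ gives is an open immersion of a dense open $U\subset{\bf Str}(B_\sigma^+)^\circ$ into $H$, and you cannot deduce rationality of $U$ from rationality of the ambient $H$: the image $s(U)$ is a codimension-two locally closed subvariety, not an open subvariety. The construction you outline can be repaired---your normalization $b_1=1$ forces $s(U)$ into an open subset of ${\bf G}_m\times\{b\in R_{E/K}(B):b_1=1\}\cong{\bf G}_m\times\mathbb{A}^{16}_K$, so that $s$ becomes a birational equivalence of ${\bf Str}(B_\sigma^+)^\circ$ with a rational variety of the correct dimension $17$---but that identification of the image, not the existence of the section, is the step that carries the argument, and it must be stated. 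Note also that your formula $\phi(xe)\phi(e)^{-1}=\mathrm{Int}(b)(x)$ is, under the hood, exactly the paper's quotient map to $R_{E/K}({\bf PGL}(1,B))$, so once you see that you have essentially rediscovered the paper's shorter route and the detour through $H$ and the coordinate normalization becomes unnecessary.
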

\begin{proof} We identify $\phi({\bf G}_m\times 1)\subset {\bf Str}(B_\sigma^+)^\circ$ with ${\bf G}_m$.
Since for any field extension $F/K$ one has $H^1(F,{\bf G}_m)=1$,
  the canonical map
$${\bf Str}(B_\sigma^+)^\circ\to {\bf Str}(B_\sigma^+)^\circ/{\bf G}_m$$ has a rational section. Therefore, it suffices
to establish the rationality of ${\bf Str}(B_\sigma^+)^\circ/{\bf G}_m$. By (\ref{sequence3})
we have
$$
{\bf Str}(B_\sigma^+)^\circ/{\bf G}_m\simeq R_{E/K}({\bf GL}(1,B^\times))/R_{E/K}({\bf G}_{m,E}),
$$
which is clearly rational.
\end{proof}

\subsection{The Group ${\bf Str}(A,B_\sigma^+)$}\label{Str(A,B)} 
Let ${\bf Str}(A,B_\sigma^+)$ (resp.\ ${\bf Str}(A/B_\sigma^+)$)  be the closed subgroup of ${\bf Str}(A)$
consisting of those elements stabilizing $B_\sigma^+$ (resp.\ fixing $B_\sigma^+$ pointwise). Note that since the elements of ${\bf Str}(A/B_\sigma^+)$
fix the identity element of $A$, it follows from \cite[5.9.4]{SV} that 
${\bf Str}(A/B_\sigma^+)={\bf Aut}(A/B_\sigma^+)$. This group is the kernel of the canonical restriction map
$${\bf Str}(A,B_\sigma^+)^\circ \to {\bf Str}(B_\sigma^+)^\circ.$$
Since, by \cite[Proposition 7.2.4]{GP}, this  map is surjective on the level of $K^{sep}$-points, we have the exact sequence 
\begin{equation}\label{firstexact}
 1\longrightarrow {\bf Aut}(A/B_\sigma^+) \longrightarrow {\bf Str}(A,B_\sigma^+)^\circ \stackrel{\phi}{\longrightarrow}
{\bf Str}(B_\sigma^+)^\circ \longrightarrow 1
\end{equation}
of algebraic groups.

\begin{proposition}\label{structuralgroup}
 The group ${\bf Str}(A,B_\sigma^+)^\circ$ is rational, hence
$R$-trivial.
\end{proposition}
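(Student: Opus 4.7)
The plan is to follow the strategy of Proposition~\ref{G}. In the exact sequence~(\ref{firstexact}), the kernel ${\bf Aut}(A/B_\sigma^+) \simeq {\bf SU}(B,\tau)$ is rational (Section~\ref{Aut(A/B)}) and the quotient ${\bf Str}(B_\sigma^+)^\circ$ is rational (Lemma~\ref{Str(B)}). Hence rationality of ${\bf Str}(A,B_\sigma^+)^\circ$ will follow once one exhibits a rational section of $\phi$, equivalently, once one establishes that
\[
\operatorname{Ker}\bigl[H^1(F,{\bf SU}(B,\tau))\longrightarrow H^1(F,{\bf Str}(A,B_\sigma^+)^\circ)\bigr]=1
\]
for every field extension $F/K$.

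To analyze this kernel I would pull back~(\ref{firstexact}) along the presentation~(\ref{sequence3}) of ${\bf Str}(B_\sigma^+)^\circ$, forming the fiber product
\[
\widetilde{\bf G}:={\bf Str}(A,B_\sigma^+)^\circ\times_{{\bf Str}(B_\sigma^+)^\circ}\bigl({\bf G}_m\times R_{E/K}({\bf GL}(1,B))\bigr).
\]
This yields two companion exact sequences: one exhibits $\widetilde{\bf G}$ as an extension of ${\bf Str}(A,B_\sigma^+)^\circ$ by the quasi-trivial torus $R_{E/K}({\bf G}_{m,E})$, and the other as an extension of the rational group ${\bf G}_m\times R_{E/K}({\bf GL}(1,B))$ by ${\bf SU}(B,\tau)$. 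Since $R_{E/K}({\bf G}_{m,E})$ has trivial $H^1$ (Shapiro plus Hilbert~90), a diagram chase reduces the desired vanishing to the analogous cohomological statement for $\widetilde{\bf G}$ in place of ${\bf Str}(A,B_\sigma^+)^\circ$, that is, to the vanishing of the kernel of $H^1(F,{\bf SU}(B,\tau))\to H^1(F,\widetilde{\bf G})$.

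I would then settle this by the same two-case analysis as in Proposition~\ref{G}. In the split case $E=K\times K$, the algebra $B$ decomposes as $D\times D^{\mathrm{op}}$ for a central simple algebra $D$ of degree $3$, and the relevant kernel is controlled by images of reduced norms, which by Example~\ref{typeA_n} are $R$-trivial; the kernel therefore vanishes. In the general case, extending scalars from $F$ to $F'=F\cdot E$ reduces matters to the split case, showing that $[\xi]^2$ vanishes after restriction. A corestriction argument combined with the norm principle (Theorem~\ref{normprinciple}) and the $R$-triviality provided by Example~\ref{typeA_n} then forces $[\xi]$ itself to be trivial.

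The main obstacle I anticipate is the combinatorial bookkeeping introduced by the extra ${\bf G}_m$-factor in ${\bf Str}(B_\sigma^+)^\circ$ that was absent from the quotient ${\bf PGU}(B,\sigma)$ in Proposition~\ref{G}: one must verify that the codiagonal embedding of the relevant center is compatible with all maps in the fiber product, and that the ${\bf G}_m$-factor can be absorbed via the scalar subgroup of ${\bf Str}(A,B_\sigma^+)^\circ$. Once this compatibility is in place, the cohomological arguments from Proposition~\ref{G} transfer essentially unchanged.
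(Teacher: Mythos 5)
Your strategy diverges substantially from the paper's: the paper's proof is essentially a one-liner, invoking \cite[Proposition 7.2.4]{GP} to assert that $\phi_F:{\bf Str}(A,B_\sigma^+)^\circ(F)\to{\bf Str}(B_\sigma^+)^\circ(F)$ is surjective for every field extension $F/K$. That surjectivity (applied with $F$ a function field) immediately yields a rational section of $\phi$, so ${\bf Str}(A,B_\sigma^+)^\circ$ is birational to ${\bf Aut}(A/B_\sigma^+)\times{\bf Str}(B_\sigma^+)^\circ$, both factors being rational. You instead attempt to reprove this surjectivity from scratch via a fiber product, and that is where the difficulties lie.

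The gap is in transplanting the codiagonal argument of Proposition~\ref{G}. There, the crucial sequence $1\to{\bf Z}\to{\bf G}_1\times{\bf G}_2\to{\bf G}\to 1$ lets you lift the offending class to $[\lambda]\in H^1(F,{\bf Z})$ and then exploit the \emph{nontrivial} constraint that $[\lambda]$ dies in $H^1(F,{\bf G}_2)$ — i.e., that it is a reduced norm for $D_2$, hence also for $D_1\in\{D_2,D_2^{\mathrm{op}}\}$. In your setup the complementary factor is $M={\bf G}_m\times R_{E/K}({\bf GL}(1,B))$, which is a special group: $H^1(F,M)=1$ for \emph{all} $F/K$. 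Consequently, the condition "trivial image in $H^1(F,M)$" that would replace "trivial image in $H^1(F,{\bf G}_2)$" is vacuous and gives no information about $[\lambda]$. The desired vanishing $\operatorname{Ker}\bigl[H^1(F,{\bf SU}(B,\tau))\to H^1(F,\widetilde{\bf G})\bigr]=1$ is, by the long exact sequence for $1\to{\bf G}_1\to\widetilde{\bf G}\to M\to 1$, precisely the statement that the connecting map $M(F)\to H^1(F,{\bf G}_1)$ is trivial — which is equivalent to the surjectivity you set out to prove, so without a further idea the reduction is circular. Moreover, your reference to Example~\ref{typeA_n} in the split case is misapplied: that example shows the elements of $\operatorname{Ker}[H^1\to H^1]$ are $R$-\emph{trivial}, not that the kernel \emph{vanishes}, and in Proposition~\ref{G} it is used only in Case~2 via the norm principle, not to conclude Case~1. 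To make your route work you would need to locate inside $\widetilde{\bf G}$ an explicit almost direct product decomposition involving ${\bf SU}(B,\sigma)$ (not merely $R_{E/K}({\bf GL}(1,B))$), identify the center codiagonally, and redo the two-case analysis from there — essentially, re-proving the Garibaldi--Petersson surjectivity. The paper sidesteps all of this by citation.
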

\begin{proof} By \cite{GP}, for any field extension $F/K$ the map
$$
{\bf Str}(A,B_\sigma^+)^\circ(F) \stackrel{\phi_F}{\longrightarrow}
{\bf Str}(B_\sigma^+)^\circ(F)
$$
 is surjective, implying that $\phi$ has a rational section. Thus ${\bf Str}(A,B_\sigma^+)^\circ$
is birationally isomorphic to ${\bf Aut}(A/B_\sigma^+) \times {\bf Str}(B_\sigma^+)^\circ$.
It remains to note that 
being a group of rank $2$ the group  ${\bf Aut}(A/B_\sigma^+)$ is rational, and by Lemma~\ref{Str(B)}
the group ${\bf Str}(B_\sigma^+)^\circ$ is rational. 
\end{proof}

\begin{corollary}\label{surjectivity}
The natural map ${\rm Str}(A,B_\sigma^+)\to {\rm Str}(B_\sigma^+)$ is surjective.
\end{corollary}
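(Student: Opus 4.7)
The plan is to derive the corollary directly from the surjectivity on $F$-points established in the proof of Proposition~\ref{structuralgroup}, combined with the connectedness observation for ${\bf Str}(B_\sigma^+)$ made in Section~\ref{StrB}.

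More precisely, I would proceed as follows. Take an arbitrary element $f \in {\rm Str}(B_\sigma^+) = {\bf Str}(B_\sigma^+)(K)$. From Section~\ref{StrB} we have the identification ${\bf Str}(B_\sigma^+)^\circ(K) = {\bf Str}(B_\sigma^+)(K)$, so $f$ already lies in ${\bf Str}(B_\sigma^+)^\circ(K)$. Now apply the key input to the proof of Proposition~\ref{structuralgroup}: by \cite{GP}, for every field extension $F/K$ the map
\[
\phi_F : {\bf Str}(A,B_\sigma^+)^\circ(F) \longrightarrow {\bf Str}(B_\sigma^+)^\circ(F)
\]
arising from the exact sequence~(\ref{firstexact}) is surjective. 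Specializing to $F=K$ produces a lift $\widetilde f \in {\bf Str}(A,B_\sigma^+)^\circ(K)$ with $\phi(\widetilde f) = f$. Since ${\bf Str}(A,B_\sigma^+)^\circ(K) \subseteq {\bf Str}(A,B_\sigma^+)(K) = {\rm Str}(A,B_\sigma^+)$, the element $\widetilde f$ is a genuine preimage of $f$ under the map in the statement, which establishes the corollary.

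There is no real obstacle here: the corollary is essentially a repackaging of the surjectivity already used (via \cite{GP}) to show that $\phi$ admits a rational section in Proposition~\ref{structuralgroup}. The only point that needs to be kept explicit is the collapse between the $K$-points of ${\bf Str}(B_\sigma^+)$ and those of its identity component, which is precisely what was recorded in Section~\ref{StrB}. In particular, one does not need to invoke any cohomological vanishing or $R$-triviality at this stage.
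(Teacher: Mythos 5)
Your argument is correct and is precisely the paper's own proof: it combines the surjectivity of $\phi_K$ on $K$-points of the identity components (drawn from the proof of Proposition~\ref{structuralgroup}, which rests on \cite{GP}) with the identification ${\bf Str}(B_\sigma^+)^\circ(K) = {\bf Str}(B_\sigma^+)(K)$ recorded in Section~\ref{StrB}. Nothing more is needed, and you correctly note that no cohomological input beyond this is required.
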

\begin{proof}
From the proof of Proposition~\ref{structuralgroup} it follows that the map $${\bf Str}(A,B_\sigma^+)^\circ(K)\to {\bf Str}(B_\sigma^+)^\circ(K)$$ is surjective, and from~\ref{StrB} 
we know that ${\bf Str}(B_\sigma^+)(K)={\bf Str}(B_\sigma^+)^\circ(K)$. The assertion follows.
\end{proof}

\section{The Weak Skolem--Noether Property  for Isomorphic Embeddings}\label{SN}

Let $A$ be an Albert algebra over a field $K$. Let $K\subset L\subset A$ and
$K\subset L'\subset A$ be two isomorphic separable cubic field extensions; one says 
that they are \emph{weakly equivalent} if there exists an element $g\in {\rm Str}(A)$ such that
$g(L)=L'$. In their joint paper \cite{GP}, S.\ Garibaldi and H.\ Petersson prove that this 
property always 
holds, and following their terminology we call  
it the \emph{weak Skolem--Noether property for isomorphic embeddings}. The goal of this section 
is to give an alternative proof of their result which is much shorter and more conceptual. It is
based on the technique of conjugacy of maximal tori, detailed in Section \ref{preliminaries}.

We start with the intermediate step of a $9$-dimensional Jordan algebra $B^+_\sigma$.

\begin{proposition} Let $L$ and $L'$ be two isomorphic separable cubic field extensions of the base field $K$ contained in the subalgebra $B_\sigma^+$. 
Then there exists an element $s\in {\rm Str}(B_\sigma^+)$ such that $s(L)=L'$.
 \end{proposition}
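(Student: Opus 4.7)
The plan is to reduce the problem to the classical Skolem--Noether theorem for the central simple $E$-algebra $B$, and then interpret the resulting inner automorphism of $B$ as an element of ${\bf Str}(B_\sigma^+)$ via the description of Section~\ref{StrB}. First I fix a $K$-algebra isomorphism $\phi\colon L\to L'$ and extend it $E$-linearly to an $E$-algebra isomorphism $\phi_E\colon L_E\to L'_E$, where $L_E=L\otimes_K E$ and $L'_E=L'\otimes_K E$ are maximal étale $E$-subalgebras of the degree-$3$ central simple $E$-algebra $B$. Since $\sigma$ acts on these as $\id\otimes\sigma|_E$ (because $L,L'\subset B_\sigma^+$), the map $\phi_E$ is automatically $\sigma$-equivariant: $\phi_E\circ\sigma=\sigma\circ\phi_E$. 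By Skolem--Noether applied to $B/E$, there is $b\in B^\times$ with $\phi_E(x)=bxb^{-1}$ for all $x\in L_E$, and my candidate is
\[
s(y):=b\,y\,\sigma(b),
\]
which by Section~\ref{StrB} is a $K$-rational element of ${\bf Str}(B_\sigma^+)^\circ$ (it preserves $B_\sigma^+$ because $\sigma$ is an anti-involution, and it has multiplier $N_{E/K}(\Nrd(b))\in K^\times$).

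The key, and only subtle, point is to show that $u:=b\sigma(b)$ lies in $L'$. Applying $\sigma$ to the relation $bxb^{-1}=\phi_E(x)$ and using that $\phi_E$ commutes with $\sigma$ yields $\sigma(b)^{-1}y\sigma(b)=\phi_E(y)$ for all $y\in L_E$; thus $\sigma(b)^{-1}$ also implements $\phi_E$ on $L_E$. The uniqueness clause of Skolem--Noether (modulo the centralizer $C_B(L_E)=L_E$) then forces $m:=\sigma(b)\,b$ to lie in $L_E^\times$, whence $b\sigma(b)=bmb^{-1}=\phi_E(m)\in L'_E$. Since $\sigma(u)=u$ by a direct computation, one obtains $u\in L'_E\cap B_\sigma^+=L'$.

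To conclude: for $x\in L$, the identity $s(x)=(bxb^{-1})(b\sigma(b))=\phi_E(x)\cdot u$ shows $s(x)\in L'\cdot L'=L'$, so $s(L)\subseteq L'$, and since $s$ is invertible with $\dim L=\dim L'=3$, we obtain $s(L)=L'$. The main obstacle, concentrated in the second paragraph, is to combine the Skolem--Noether uniqueness with the anti-homomorphism property of $\sigma$ in just the right way to force the a priori arbitrary element $b\sigma(b)\in B_\sigma^+$ to actually lie in the three-dimensional subspace $L'$.
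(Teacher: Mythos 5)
Your argument is correct and takes a genuinely different route from the paper's. Where the paper reformulates the problem in terms of conjugacy of maximal tori in ${\bf Sim}(B,\sigma)$ and then descends from $K^{\mathrm{sep}}$ to $K$ via a Galois-cohomological cocycle argument (Theorem~\ref{conjugacy} and Example~\ref{typeA_2}), you argue directly at the level of algebra elements: you invoke the classical Skolem--Noether theorem for the central simple $E$-algebra $B$ to produce $b$, and then exploit the anti-automorphism property of $\sigma$ together with the uniqueness clause of Skolem--Noether to force $b\sigma(b)\in L'$. The computations are right: applying $\sigma$ to $bxb^{-1}=\phi_E(x)$ and using $\phi_E\circ\sigma=\sigma\circ\phi_E$ yields that $\sigma(b)^{-1}$ also implements $\phi_E$, so $\sigma(b)b\in C_B(L_E)=L_E$ by the double centralizer theorem; conjugating by $b$ then shows $b\sigma(b)\in L'_E$, and its visible $\sigma$-invariance places it in $L'_E\cap B_\sigma=L'$. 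The passage $s(x)=\phi(x)\cdot u\in L'$ is legitimate because the cubic subfield $L'$, being generated by a single element, is closed under the associative product of $B$, not just the Jordan product. One small caveat: your appeal to Skolem--Noether for ``$B/E$'' tacitly assumes $E$ is a field, so the case $E=K\times K$ (equivalently $B_\sigma^+=D^+$) needs a separate sentence; there it degenerates to classical Skolem--Noether for $L,L'\subset D$ over $K$, and the resulting conjugation is already an automorphism of $D^+$. With that addendum, your proof is complete, shorter, and more elementary. What the paper's tori-and-torsor method buys instead is a technique the authors explicitly hope to push further (e.g., to the harder isotopic Skolem--Noether problem), whereas your Skolem--Noether computation is tailored to the isomorphic case at hand.
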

 \begin{proof}
 Let $E/K$ be the 
 \'etale quadratic extension over which $B$ is defined. 
The two cubic fields $L$ and $L'$ give rise to the two $4$-dimensional (maximal) tori 
$$
{\bf T}=\{ x\in R_{L\cdot E/K}({\bf G}_{m,L\cdot E})\ | \  \sigma(x)x\in {\bf G}_{m,K}\}
$$
and
$$
{\bf T}'=\{ x\in R_{L'\cdot E/K}({\bf G}_{m,L'\cdot E})\ | \ \sigma(x)x\in {\bf G}_{m,K}\}.
$$
over $K$ in ${\bf Sim}(B,\sigma)$. From the point of view of algebraic groups the group ${\bf Sim}(B,\sigma)$ is a reductive group
which is the almost direct product of the central $2$-dimensional torus  ${\bf P}=R_{E/K}({\bf G}_{m,E})$
and the simple simply connected  group ${\bf SU}(B,\sigma)$  of outer type $\mathrm{A}_2$. Note that ${\bf P}$ 
is contained
in both ${\bf T}$ and ${\bf T}'$. Let ${\bf T}_{ss}={\bf T}\cap  {\bf SU}(B,\sigma)$ and ${\bf T}'_{ss}={\bf T}'\cap
{\bf SU}(B,\sigma)$. Then ${\bf T}={\bf P}\cdot {\bf T}_{ss}$ and ${\bf T}'={\bf P}\cdot {\bf T}'_{ss}$.

By Theorem~\ref{conjugacy} and Example~\ref{typeA_2}, there exists $b\in {\bf SU}(B,\sigma)(K^{sep})$
such that $b{\bf T}_{ss}b^{-1}={\bf T}'_{ss}$ and $b^{-\tau+1}
   \in {\bf T}_{ss}(K^{sep})$ for all $\tau\in {\rm Gal}(K^{sep}/K)$. Since ${\bf P}\subset {\bf Sim}(B,\sigma)$ 
is a central torus  we also have
$b{\bf T}b^{-1}={\bf T}'$. 

Consider the following commutative diagram with exact rows:
 \[
\begin{CD}
1  @>>> R_{E/K}({\bf G}_{m,E}) @>>> {\bf Sim}(B,\sigma) @>{\phi_1}>> {\bf Aut}(B_\sigma^+)^\circ  @>>> 1     \\
@. @VV{\mathrm{Id}}V    @VV{\lambda_2}V  @VVV     \\
1  @>>> R_{E/K}({\bf G}_{m,E}) @>>> {\bf J}  @>{\phi_2}>> 
{\bf Str}(B_\sigma^+)^\circ  @>>> 1.   \\
\end{CD}
\]
Here ${\bf J}={\bf G}_{m,K}\times R_{E/K}({\bf GL}(1,B))$, so that the lower sequence is the exact sequence \eqref{sequence3}, $\phi_1$ is given by conjugation, 
$$
\phi_2(x,y): B_\sigma^+\longrightarrow B_\sigma^+,\ \ a\mapsto xya\sigma(y),
$$
and
$$
\lambda_2(x)=(\nu(x^{-1}),x)=(x^{-1}x^{-\sigma},x)$$
where $\nu(x)$ is the multiplier of $x$.
The map $\lambda_2$  takes ${\bf T}$ and ${\bf T}'$ into the quasi-trivial 
tori
$$
\widetilde{{\bf T}}={\bf G}_{m,K}\times R_{L\cdot E/K}({\bf G}_{m,L\cdot E})
$$
and
$$
\widetilde{{\bf T}}'={\bf G}_{m,K}\times R_{L'\cdot E/K}({\bf G}_{m,L'\cdot E}),
$$
respectively. It is easy to see that $\widetilde{{\bf T}}$ (resp.\ $\widetilde{{\bf T}'}$) is the centralizer of ${\bf T}$ (resp.\ ${\bf T}'$)
in ${\bf G}_{m,K}\times R_{E/K}({\bf GL}(1,B))$. Therefore the equality
$b{\bf T}b^{-1}={\bf T}'$ implies $b\widetilde{{\bf T}}b^{-1}=\widetilde{{\bf T}}'$ and
$$
\lambda_2(b^{-\tau+1})=(\lambda_2(b))^{-\tau+1}=(\nu(b^{-1})^{-\tau+1},b^{-\tau+1})
\in \widetilde{{\bf T}}(K^{sep}).$$
Since $H^1(K,R_{L\cdot E/K}({\bf G}_{m,L\cdot E}))=1$ we can pick an element
$$
a\in R_{L\cdot E/K}({\bf G}_{m,L\cdot E})(K^{sep})=((L\cdot E)\otimes_K K^{sep})^\times
$$ 
such that
$b^{-\tau+1}=a^{-\tau+1}$. Clearly, $c:=ba^{-1}$ is ${\rm Gal}(K^{sep}/K)$-invariant, hence
$c\in B^\times$ and, defining $s$ as the map $x\mapsto cx\sigma(c)$, the following claim completes
the proof of the proposition.

\smallskip

\noindent
{\it Claim}: $cL\sigma(c)=L'$.

\smallskip

\noindent
The claim is equivalent to $c(L\cdot E)\sigma(c)=L'\cdot E$. Moreover, it suffices to show that 
$$c((L\cdot E)\otimes_K K^{sep})\sigma(c)=(L'\cdot E)\otimes_K K^{sep}.$$
But $\sigma(c)=\sigma(a^{-1})\sigma(b)$ and both $a^{-1}$ and $\sigma(a^{-1})$ are in 
$(L\cdot E)\otimes_K K^{sep}$. Therefore it suffices to show that
$$b((L\cdot E)\otimes_K K^{sep})\sigma(b)=(L'\cdot E)\otimes_K K^{sep}.$$
Recall that by construction, $b$ is a similitude, hence $\sigma(b)=\nu(b)b^{-1}$, where $\nu(b)$ is the multiplier of $b$.
Since $b{\bf T}b^{-1}={\bf T}'$, the claim follows upon noting that the centralizer of ${\bf T}$ (resp.\ ${\bf T}'$) in $B\otimes_K K^{sep}$
is $(L\cdot E)\otimes_K K^{sep}$ (resp.\ $(L'\cdot E)\otimes_K K^{sep}$).
\end{proof}

The proposition in conjunction 
with Corollary~\ref{surjectivity} yields the following.

\begin{theorem}\label{comingback} 
Let $A$ be an Albert algebra over $K$ and let
$L\subset A$ and $L'\subset A$ be isomorphic separable cubic field extensions of $K$. Then 
there exists a 9-dimensional subalgebra $B_\sigma^+$ of $A$ and an element 
$s\in {\rm Str}(A,B_\sigma^+)$ such that $s(L)=L'=f(L)$.
\end{theorem}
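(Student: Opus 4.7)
My plan is to derive the theorem directly by combining the preceding proposition with Corollary~\ref{surjectivity}. The strategy proceeds in three steps: first, produce a single 9-dimensional subalgebra $B_\sigma^+\subseteq A$ containing both $L$ and $L'$; second, apply the preceding proposition inside $B_\sigma^+$ to obtain $s_0\in {\rm Str}(B_\sigma^+)$ with $s_0(L)=L'$; third, invoke Corollary~\ref{surjectivity}, which asserts the surjectivity of the restriction map ${\rm Str}(A,B_\sigma^+)\to {\rm Str}(B_\sigma^+)$, to lift $s_0$ to an element $s\in {\rm Str}(A,B_\sigma^+)$ with $s|_{B_\sigma^+}=s_0$. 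Then $s(L)=s_0(L)=L'$, which is the required conclusion.

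Once the common subalgebra $B_\sigma^+$ has been produced, the second and third steps are formal and use only results already established, so the entire difficulty is concentrated in the first step, which I expect to be the main obstacle. By the classical structure theory of Albert algebras (\cite[37.12]{KMRT}, and the material reviewed in Subsection~\ref{9-dimensional}) each separable cubic subfield of a division Albert algebra is contained in at least one 9-dimensional subalgebra, so we may write $L\subseteq B_1^+$ and $L'\subseteq B_2^+$ for some such $B_1^+$ and $B_2^+$; the issue is precisely to arrange that $B_1^+$ and $B_2^+$ can be taken equal, since if $s\in {\rm Str}(A,B_\sigma^+)$ satisfies $s(L)=L'$, then both $L$ and $L'$ automatically lie in the common $B_\sigma^+$.

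To carry out the first step I would argue cohomologically. The group ${\rm Str}(A,B_\sigma^+)^\circ$ is rational by Proposition~\ref{structuralgroup}, so its group of $K$-points is large and acts transitively on many configurations of sub-objects inside $B_\sigma^+$; combining this with the conclusion of the preceding proposition (which handles the intra-$B_\sigma^+$ case) one can reduce the task to showing that a suitable $K$-variety parametrizing 9-dimensional subalgebras of $A$ containing both $L$ and $L'$ has a $K$-point. Alternatively, one may proceed concretely via a second Tits construction $A=B_\sigma^+\oplus B$ (for a suitable quadratic \'etale extension $E/K$ and central simple $E$-algebra $(B,\sigma)$ with involution of the second kind) tailored to the pair $(L,L')$, exhibiting both subfields inside the $B_\sigma^+$ summand. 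Either route yields the common $B_\sigma^+$, after which the remaining steps of the argument are mechanical.
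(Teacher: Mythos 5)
You have the right skeleton: find a common $9$-dimensional subalgebra $B_\sigma^+\supseteq L\cup L'$, apply the preceding proposition inside $B_\sigma^+$ to get $s_0\in{\rm Str}(B_\sigma^+)$ with $s_0(L)=L'$, then lift $s_0$ via Corollary~\ref{surjectivity}. Steps two and three coincide with the paper's proof. You also correctly identify that all the real work is in the first step. But there you propose two routes (a cohomological argument via a variety parametrizing $9$-dimensional subalgebras through $L$ and $L'$, or a second Tits construction ``tailored to the pair'') that are both considerably heavier than necessary and neither of which you carry out; the cohomological one in particular leaves open how one would construct such a parametrizing variety, establish that its having a rational point is the obstruction, and then actually produce a $K$-point.

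The missing idea is that step one is essentially automatic. If $L=L'$, any $9$-dimensional subalgebra containing $L$ works and one takes $s=\mathrm{Id}$. If $L\ne L'$, simply take $B_\sigma^+$ to be the subalgebra of $A$ generated by $L$ and $L'$. This is a proper subalgebra of $A$: a unital Jordan algebra generated by two elements is special (Shirshov--Cohn), whereas $A$ is exceptional. By the classification of proper nontrivial subalgebras of a division Albert algebra recalled in Section~\ref{9-dimensional} (from~\cite[37.12(2)]{KMRT}), it is therefore either a cubic field or a $9$-dimensional subalgebra of the form $B_\sigma^+$; since it properly contains the $3$-dimensional $L$, it must be the latter. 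That one-line observation replaces your entire step one, after which your steps two and three go through exactly as you describe. So the gap is not that your plan is unworkable, but that you failed to spot that the generated subalgebra already is the desired $B_\sigma^+$, and instead reached for machinery whose feasibility you did not verify.
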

\begin{proof}
If $L'=L$,  we can take any any $9$-dimensional subalgebra $B^+_\sigma$ containing
$L$ and choose 
$s=\mathrm{Id}$. Otherwise, the cubic subfields $L,L'$ generate a subalgebra in $A$ of the form 
$B_\sigma^+$ and we can take any lift of the element $s$ constructed in the above proposition.
\end{proof}

\section{Reduction to $\mathrm{F}_4$}\label{reductionF_4}

In this section
we will show that an arbitrary element in ${\rm Str}(A)$ can be written as a product of $R$-trivial elements  and elements
in ${\bf H}(K)$, thereby reducing the problem to subgroups of type $\mathrm{F}_4$. To begin with, we recall from \cite{ACP} how to associate, to any $a\in A^\times$, a subgroup of type $\mathrm{D}_4$ in ${\bf G}$ and a $2$-dimensional torus.
Let $L\subset A$ be the $K$-subalgebra generated by $a$, if $a$ is not a scalar multiple of the identity of $A$, and by any element that is not such a scalar multiple, if $a$ is.
Since $A$ is a division algebra, it is known that $L$ is a cubic subfield. Let again ${\bf G}=[{\bf Str}(A),{\bf Str}(A)]$ and let
$$
{\bf G}^L=\{ x\in {\bf G} \ | \ x(l)=l \ \ \forall l\in L\}.
$$
Since ${\bf G}^L$ stabilizes $1\in L\subset A$, we have ${\bf G}^L\subset {\bf H}\subset {\bf G}$.
It is known that over a separable closure of $K$ the group ${\bf G}^L$ is conjugate
to the standard subgroup in ${\bf G}$ of type $\mathrm{D}_4$ generated by roots $\alpha_2,\alpha_3,\alpha_4,
\alpha_5$. Therefore the centralizer ${\bf S}'^L=C_{\bf G}({\bf G}^L)$ of ${\bf G}^L$ in ${\bf G}$ is a $2$-dimensional torus over $K$. 
Using an explicit reduced model of $A$ over a separable closure of $K$ 
one can easily verify that $${\bf Z}^L:={\bf S}'^L\cap {\bf H}={\bf S}'^L\cap 
{\bf G}^L$$ is the centre of ${\bf G}^L$. 

Let ${\bf S}^L\subset {\bf Str}(A)$ be the $3$-dimensional torus in ${\bf Str}(A)$ generated by ${\bf S}'^L$ and ${\bf G}_m$, where the latter is embedded in ${\bf Str}(A)$
as the subgroup of homotheties. The following was proved in \cite{ACP}.

\begin{lemma}\label{LACP} With the notation above, ${\bf S}^L\simeq R_{L/K}({\bf G}_m)$ and ${\bf S}'^L\simeq R_{L/K}^{(1)}({\bf G}_m)$. Moreover,
$$\{ a\in A \ | \ x(a)=a\ \ \forall x\in {\bf G}^L(K)\}=L,$$
and the natural action of ${\bf Str}(A)$ on $A$ induces an action of ${\bf S}^L$ on $L$, which is transitive on the level of the $K^{sep}$-points  of the open subset 
$$
L^\times=\{x\in L\ | \ N(x)\not=0\}
$$ 
 of $A$ and gives rise to an exact sequence
\[1\longrightarrow {\bf Z}^L \longrightarrow {\bf S}^L \longrightarrow L^\times \longrightarrow 1.\]
\end{lemma}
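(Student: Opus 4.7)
The plan is to carry out the main computations after base change to a separable closure $K^\sep$, where $A$ becomes the split reduced Albert algebra and $L\otimes_K K^\sep\cong (K^\sep)^3$ is a split cubic étale subalgebra, and then to descend back to $K$ using $L$ as descent datum. After base change, ${\bf G}^L$ is conjugate to the standard $D_4$-subgroup of split simply connected $\mathrm{E}_6$ generated by the roots $\alpha_2,\alpha_3,\alpha_4,\alpha_5$. A root-system calculation shows that its centralizer ${\bf S}'^L$ in ${\bf G}$ is a $2$-dimensional subtorus of the ambient maximal torus; adjoining the central ${\bf G}_m$ of homotheties yields a $3$-dimensional torus ${\bf S}^L\cong ({\bf G}_m)^3$ over $K^\sep$, acting coordinatewise on $L\otimes K^\sep\cong (K^\sep)^3$ via an explicit description in the reduced model. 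The Galois group $\Gamma=\Gal(K^\sep/K)$ acts through its quotient $\Gal(F/K)$, with $F$ the Galois closure of $L/K$, permuting the three coordinates in accordance with the action on the embeddings $L\hookrightarrow K^\sep$; this is precisely the descent datum defining $R_{L/K}({\bf G}_m)$, and restricting to the norm-one subtorus (i.e.\ to elements of the derived group ${\bf G}$) gives $R_{L/K}^{(1)}({\bf G}_m)$.

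Next, for the fixed-subspace identity, the restriction of the $27$-dimensional minuscule representation of split simply connected $\mathrm{E}_6$ to the above $D_4$ decomposes, by standard weight combinatorics, as three trivial summands plus the vector and two half-spin representations of $D_4$. Hence the geometric fixed subspace of ${\bf G}^L$ is exactly $3$-dimensional and coincides with $L\otimes K^\sep$. Since $K$ is infinite and ${\bf G}^L$ is connected reductive, ${\bf G}^L(K)$ is Zariski dense in ${\bf G}^L$, so the subspace of $A$ fixed pointwise by ${\bf G}^L(K)$ equals $L$.

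Finally, ${\bf S}^L$ stabilizes $L$ since it centralizes ${\bf G}^L$ and hence preserves its fixed subspace. The stabilizer of $1\in L$ in ${\bf S}^L$ equals ${\bf S}^L\cap {\bf H}={\bf S}'^L\cap {\bf H}={\bf Z}^L$, because the homothety ${\bf G}_m$ meets ${\bf H}$ trivially. Over $K^\sep$ the coordinatewise multiplication action of $({\bf G}_m)^3$ on $(K^\sep)^3$ is visibly transitive on triples with all coordinates nonzero, giving transitivity on $L^\times(K^\sep)$; the desired exact sequence $1\to {\bf Z}^L\to {\bf S}^L\to L^\times\to 1$ then appears as the orbit--stabilizer sequence for the action of ${\bf S}^L$ at $1\in L$.

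The hard part will be the explicit reduced-model computation underlying the first paragraph, which must verify simultaneously that the three weights of ${\bf S}^L$ on the $3$-dimensional fixed subspace are linearly independent (so that the action is indeed coordinatewise multiplication up to isomorphism) and that the three factors are permuted by $\Gamma$ exactly as the embeddings $L\hookrightarrow K^\sep$ are permuted. Once these two explicit facts are secured, the Galois descent of the tori, the representation-theoretic decomposition of the fixed subspace, and the orbit-map identification of $L^\times$ are all formal.
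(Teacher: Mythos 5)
The paper does not prove this lemma; it cites \cite{ACP} and merely records, just before the statement, that the relevant facts are obtained ``using an explicit reduced model of $A$ over a separable closure of $K$.'' Your overall strategy --- passing to $K^{sep}$, identifying ${\bf G}^L$ with the standard $D_4$ on $\alpha_2,\alpha_3,\alpha_4,\alpha_5$, using the branching $27|_{D_4}=\mathbf 8_v\oplus\mathbf 8_s\oplus\mathbf 8_c\oplus 3\cdot\mathbf 1$ to see the $3$-dimensional fixed space, invoking Zariski density of ${\bf G}^L(K)$, and then descending the torus via the $\Gamma$-permutation of the three idempotents/embeddings of $L$ --- is the right one and is the kind of explicit-model computation the paper indicates.

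The place where the proposal is genuinely off is the description of the ${\bf S}^L$-action as literal coordinatewise multiplication of $({\bf G}_m)^3$ on $(K^{sep})^3$. That picture gives a \emph{simply} transitive action, i.e.\ trivial stabilizer of $1=(1,1,1)$, which contradicts the very exact sequence you are trying to establish: ${\bf G}^L$ is the simply connected $D_4$ inside simply connected $E_6$, so ${\bf Z}^L=Z({\bf G}^L)$ has order $4$, and ${\bf S}^L\to L^\times$ is a degree-$4$ isogeny, not an isomorphism. Concretely, the three characters $\chi_1,\chi_2,\chi_3$ by which ${\bf S}^L$ acts on the weight lines of $L\otimes K^{sep}$ are $\mathbb Q$-linearly independent but span only an index-$4$ sublattice of $X^*({\bf S}^L)$; it is the quotient ${\bf S}^L/{\bf Z}^L$, not ${\bf S}^L$, that acts coordinatewise. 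Accordingly, ``verify that the weights are linearly independent'' is not the correct check: one must pin down the sublattice they span, equivalently the equality ${\bf S}'^L\cap{\bf H}={\bf Z}^L$, which the paper records as a separate explicit-model computation. For the same reason the isomorphism ${\bf S}^L\simeq R_{L/K}({\bf G}_m)$ cannot be read off the orbit map (that only shows ${\bf S}^L/{\bf Z}^L\simeq L^\times$); you need the $\Gamma$-module structure of $X^*({\bf S}^L)$ itself from the model. A smaller point: ${\bf S}^L\cap{\bf H}={\bf S}'^L\cap{\bf H}$ needs, beyond ${\bf G}_m\cap{\bf H}=1$, the observation that ${\bf G}_m\cap{\bf G}=\mu_3\subset Z({\bf G})\subset{\bf S}'^L$, so that ${\bf S}^L\cap{\bf G}={\bf S}'^L$.
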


From the exact sequence in cohomology associated to the sequence in the lemma we have a map $L^\times \to H^1(K,{\bf Z}^L)$, which is surjective since $H^1(K,{\bf S}^L)$ is trivial.
Using this map we can attach, to any $a\in L^\times$, a class $[\xi_a]=(a_\tau)\in H^1(K,{\bf Z}^L)$. From \cite{ACP} we moreover know that the image of this class in $H^1(K,{\bf H})$ is trivial
 if $a$ is in the ${\rm Str}(A)$-orbit of $1$.

\smallskip

Let now $g\in {\rm Str}(A)$ be an arbitrary element and set 
$a:=g(1)$. As detailed above we attach to $a$ a subfield $L$ in $A$ and with it
the closed subgroups ${\bf G}^L$, ${\bf Z}^L$ and ${\bf S}^L$ of ${\bf Str}(A)$, as well as the class $[\xi_a]=(a_\tau)\in H^1(K,{\bf Z}^L)$. Since the image of this class
in $H^1(K,{\bf H})$ is trivial, there exists $f\in {\bf H}(K^{sep})$  such that
$a_{\tau}=(f^{-1})^\tau f$ for all $\tau\in {\rm Gal}(K^{sep}/K)$.
\begin{lemma} 
The subset $f(L)\subset A_{K^{sep}}=A\otimes_K K^{sep}$ is contained in $A=A\otimes_K K$.
\end{lemma}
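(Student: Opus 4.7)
The plan is to use Galois descent: since $f(L)$ lives a priori in $A_{K^{sep}}$, to show it is contained in $A$ it suffices to verify that $f(l)$ is Galois invariant for every $l\in L$. The key input will be that the cocycle $(a_\tau)$ takes values in $\mathbf{Z}^L(K^{sep})$, and $\mathbf{Z}^L$ acts trivially on $L$.

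First I would unpack the cocycle relation. From $a_\tau=(f^{-1})^\tau f$, taking inverses on both sides, one rewrites this as $f^\tau = f\cdot a_\tau^{-1}$ for every $\tau\in\mathrm{Gal}(K^{sep}/K)$.

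Next I would record the crucial fact that $\mathbf{Z}^L$ fixes $L$ pointwise. This follows directly from the definition $\mathbf{Z}^L=\mathbf{S}'^L\cap \mathbf{G}^L$ together with the fact that, by construction, every element of $\mathbf{G}^L$ fixes $L$ pointwise. In particular $a_\tau\in \mathbf{Z}^L(K^{sep})$ satisfies $a_\tau^{-1}(l)=l$ for all $l\in L$.

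Now fix $l\in L$. Since $l\in A$ is defined over $K$, the Galois action on $f(l)\in A_{K^{sep}}$ is
\[ f(l)^\tau = f^\tau(l^\tau)=f^\tau(l)=\bigl(f\cdot a_\tau^{-1}\bigr)(l)=f\bigl(a_\tau^{-1}(l)\bigr)=f(l), \]
using in the last step that $a_\tau^{-1}$ fixes $l$. Hence $f(l)$ lies in $A^{\mathrm{Gal}(K^{sep}/K)}=A$, and since this holds for every $l\in L$ we conclude $f(L)\subset A$. There is no real obstacle here beyond correctly identifying that $\mathbf{Z}^L\subset \mathbf{G}^L$ acts trivially on $L$; everything else is a direct cocycle computation.
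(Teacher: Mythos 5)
Your proof is correct and is essentially identical to the paper's: both reduce to Galois invariance of $f(l)$, rewrite the cocycle relation as $f^\tau = f\,a_\tau^{-1}$, and invoke the fact that $\mathbf{Z}^L\subset\mathbf{G}^L$ fixes $L$ pointwise.
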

\begin{proof} Let $l\in L$. We need to show that $f(l)$ is ${\rm Gal}(K^{sep}/K)$-invariant.
Take any $\tau\in {\rm Gal}(K^{sep}/K)$. Then
$$
\tau(f(l))=\tau(f)(\tau(l))=\tau(f)(l)=f (f^{-1}f^\tau(l))=f a^{-1}_{\tau}(l)=f(l),
$$
since ${\bf Z}^L\subset {\bf G}^L$ fixes $L$ pointwise, and the statement follows.
\end{proof}
Since ${\bf H}$ is the automorphism group of $A$, the above lemma implies that the map
$L\to L'=f(L)$ given by $l \mapsto f(l)$ is a field isomorphism over $K$. Assume that $L\not=L'$.
Let $B_\sigma^+\subset A$ be
the $9$-dimensional subalgebra generated by $L$ and $L'$.
Recall from \cite{ACP} that $\xi_a$ is constructed explicitly as follows.
Choose $t\in {\bf S}^L(K^{sep})$ such that $t(1)=a=g(1)$; this is possible by Lemma \ref{LACP}.
Then $a_\tau=t^{-\tau+1}$, from which we conclude that $ft^{-1}$ is defined over $K$ and that
$$
ft^{-1}(g(1))=ft^{-1}(a)=f(1)=1,
$$
which implies that $ft^{-1}g\in {\bf H}(K)$. Thus, modulo $K$-points of ${\bf H}$, we may assume 
that $g=ft^{-1}$.

Let now $s\in {\rm Str}(A,B_\sigma^+)$ be the element constructed in Theorem~\ref{comingback}. 
It is $R$-trivial, since so is ${\bf Str}(A,B_\sigma^+)^\circ$, and it satisfies $s(L)=f(L)$. Furthermore,
$$
L=s^{-1}(f(L))=s^{-1}ft^{-1}(L)=s^{-1}g(L),
$$
since $t(L)=L$. It follows that modulo $R$-trivial elements we may assume that $g(L)=L$,
i.e. $g\in {\bf Str}(A,L)(K)$, where ${\bf Str}(A,L)\subset {\bf Str}(A)$ is the subgroup of all elements stabilizing $L$.

Passing to  a separable closure of $K$ one can easily check that the connected
component of ${\bf Str}(A,L)$ is ${\bf S}^L\cdot {\bf G}^L$. Hence
$$
{\bf Str}(A,L)=N_{{\bf Str}(A)}({\bf S}^L\cdot {\bf G}^L)
$$
and
$$
{\bf Str}(A,L)/{\bf S}^L\cdot {\bf G}^L\simeq 
N_{{\bf H}}({\bf G}^L)/{\bf G}^L
\simeq {\bf Out}({\bf G}^L).
$$
By Lemma~\ref{twistingD_4} we can, if necessary, multiply $g$ by an element from $N_{\bf H}({\bf G}^L)(K)$
to obtain an element $g'\in({\bf S}^L\cdot {\bf G}^L)(K)$. To complete our reduction to subgroups of type $\mathrm{F}_4$, it remains to show that $g'$ is $R$-trivial modulo elements from ${\bf H}(K)$. This is the content of the following result.

\begin{proposition}\label{lastreduction}
Let $g\in ({\bf S}^L\cdot {\bf G}^L)(K)$. Then there exists
an $R$-trivial element $j$  in $({\bf S}^L\cdot {\bf G}^L)(K)$ 
such that $gj\in {\bf G}^L(K)\subset {\bf H}(K)$.
\end{proposition}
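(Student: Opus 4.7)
\emph{Reduction to lifting in $L^\times$.} Since ${\bf G}^L$ fixes $L$ pointwise, the restriction to $L$ of the action of ${\bf S}^L\cdot{\bf G}^L$ on $A$ factors through the quotient $\pi\colon {\bf S}^L\cdot{\bf G}^L\to ({\bf S}^L\cdot{\bf G}^L)/{\bf G}^L \simeq {\bf S}^L/{\bf Z}^L \simeq L^\times$ furnished by Lemma~\ref{LACP}, and $L^\times$ acts on itself by multiplication. Put $l := g(1)\in L^\times$. It is enough to exhibit an $R$-trivial $j\in({\bf S}^L\cdot{\bf G}^L)(K)$ with $\pi(j)=l^{-1}$: then $\pi(gj)=l\cdot l^{-1}=1$, so $gj\in(\ker\pi)(K)={\bf G}^L(K)$, as required.

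\emph{Constructing the lift.} The short exact sequence $1\to {\bf Z}^L\to {\bf S}^L\to L^\times\to 1$ provides a boundary $\delta\colon L^\times\to H^1(K,{\bf Z}^L)$ measuring the obstruction to lifting to ${\bf S}^L(K)$. The existence of the $K$-lift $g$ of $l$ forces, by the standard diagram chase comparing this sequence with $1\to {\bf G}^L\to {\bf S}^L\cdot{\bf G}^L\to L^\times\to 1$, that $\delta(l)$ lies in $\ker[H^1(K,{\bf Z}^L)\to H^1(K,{\bf G}^L)]$. Since ${\bf Z}^L$ is central in ${\bf G}^L$, this kernel coincides with the image of the boundary $\partial\colon({\bf G}^L/{\bf Z}^L)(K)\to H^1(K,{\bf Z}^L)$, so one may write $\delta(l^{-1})=\partial(\bar h)$ for some $\bar h\in({\bf G}^L/{\bf Z}^L)(K)$. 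Choosing matched $K^{\sep}$-lifts $\tilde s\in {\bf S}^L(K^{\sep})$ of $l^{-1}$ and $\tilde h\in{\bf G}^L(K^{\sep})$ of $\bar h$ whose Galois cocycles in ${\bf Z}^L$ coincide, I set $j:=\tilde s\tilde h^{-1}$; centrality of ${\bf Z}^L$ inside ${\bf S}^L\cdot{\bf G}^L$ makes $j$ Galois-invariant, giving $j\in({\bf S}^L\cdot{\bf G}^L)(K)$ with $\pi(j)=l^{-1}$.

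\emph{$R$-triviality of $j$.} Since ${\bf S}^L\simeq R_{L/K}(\gm)$ is rational, it suffices to show that $\bar h$ is $R$-trivial in $({\bf G}^L/{\bf Z}^L)(K)$: an $R$-path from $\bar h$ to $1$ over $K(t)$ pulls back, via the construction above, to a $K(t)$-path from $j$ to an element of ${\bf S}^L(K)$, which is itself $R$-trivial by the rationality of ${\bf S}^L$. The $R$-triviality of $\bar h$ in turn follows from the concrete structure of ${\bf G}^L/{\bf Z}^L$: combining Proposition~\ref{splitD_4} with the analysis of \cite{ACP} shows that ${\bf G}^L$ is a quasi-split group of type $\mathrm{D}_4$, and its adjoint quotient ${\bf G}^L/{\bf Z}^L$ is isomorphic to ${\bf PGO}^+$ of an explicit quadratic form of Pfister type naturally arising from the Albert norm $N$. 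Examples~\ref{Pfisterforms} and~\ref{example} then deliver the required $R$-triviality.

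\emph{Main obstacle.} The crux of the argument is the concrete identification of ${\bf G}^L/{\bf Z}^L$ as the projective similitude group of a quadratic form to which the Pfister-form $R$-triviality of Examples~\ref{Pfisterforms}--\ref{example} applies. Once this identification is in hand, both the cohomological lifting of $l^{-1}$ through ${\bf S}^L\cdot{\bf G}^L$ and the descent of the $R$-equivalence from the adjoint side to the full group are essentially formal.
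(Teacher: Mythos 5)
Your reduction to the cohomology class $\delta(l^{-1})\in\ker\bigl[H^1(K,{\bf Z}^L)\to H^1(K,{\bf G}^L)\bigr]$, and the construction of $j=\tilde s\tilde h^{-1}$ via matched cocycles, is a reasonable reformulation of what the paper does with the sequence $1\to {\bf Z}^L\to {\bf S}^L\times{\bf G}^L\to {\bf S}^L\cdot{\bf G}^L\to 1$. The critical step in both arguments is the $R$-triviality of the resulting class in $H^1(K,{\bf Z}^L)$, and this is precisely where your argument breaks down.

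You claim that ${\bf G}^L$ is quasi-split of type $\mathrm{D}_4$ by invoking Proposition~\ref{splitD_4}, and that ${\bf G}^L/{\bf Z}^L$ is ${\bf PGO}^+$ of a quadratic form of Pfister type. Both claims are false in the relevant setting. Proposition~\ref{splitD_4} concerns $\mathrm{D}_4$-subgroups of a \emph{split} group of type $\mathrm{F}_4$; but in the proof of the main theorem $A$ is assumed to be a division Albert algebra, so ${\bf H}={\bf Aut}(A)$ is $K$-anisotropic, and therefore so is ${\bf G}^L\subset{\bf H}$. An anisotropic semisimple group cannot be quasi-split. Moreover, over $K$ the group ${\bf G}^L$ is in general of trialitarian type ${}^3\mathrm{D}_4$ or ${}^6\mathrm{D}_4$, so its adjoint quotient is not isomorphic to ${\bf PGO}^+$ of any quadratic form, and Examples~\ref{Pfisterforms}--\ref{example} do not apply over $K$. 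This is not a presentational issue: the whole difficulty of the proposition is getting around the trialitarian nature of ${\bf G}^L$.

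The missing idea is the norm principle (Theorem~\ref{normprinciple}). Since ${\bf Z}^L$ has exponent $2$ and $[L:K]=3$, the norm principle reduces the $R$-triviality of the class in $H^1(K,{\bf Z}^L)$ to its $R$-triviality after restriction to $L$. Only after base change to $L$ does ${\bf G}^L_L$ become a ${\bf Spin}$-group: if $L/K$ is Galois it is a strongly inner form of ${}^1\mathrm{D}_4$, hence ${\bf Spin}$ of a $3$-fold Pfister form and Example~\ref{Pfisterforms} applies; if $L/K$ is not Galois, ${\bf G}^L_L\simeq{\bf Spin}(h)$ with $h$ satisfying the hypotheses of Example~\ref{example}. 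Without this restriction–corestriction step, there is no way to bring the Pfister-form examples to bear, and the argument does not close.
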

\begin{proof}
Our argument is based on the consideration of the exact sequence
\begin{equation}\label{base}
1\longrightarrow {\bf Z}^L\longrightarrow {\bf S}^L\times {\bf G}^L
\longrightarrow {\bf S}^L\cdot {\bf G}^L \longrightarrow 1.
\end{equation}
In the corresponding exact sequence in cohomology, the element $g$ is mapped to a class $[\eta]$ in $H^1(K,{\bf Z}^L)$. Since $H^1(K,{\bf S}^L)$ is trivial, this class
belong to
$$
{\rm Ker}\,[H^1(K,{\bf Z}^L)\longrightarrow H^1(K,{\bf G}^L)].
$$
We first prove that it is $R$-trivial. Since ${\bf Z}^L$ is a group of exponent $2$, by the norm principle it suffices to prove that $[\eta]$ becomes 
$R$-trivial after extending scalars to $L/K$. Two cases are possible.

If $L/K$ is a Galois extension, then  ${\bf G}^L_L$, being a strongly inner form of type 
$^1\mathrm{D}_4$, is of the form
${\bf G}^L_L\simeq {\bf Spin}(h)$,
where $h$ is a $3$-fold Pfister form. Hence
by Example~\ref{Pfisterforms},
the class ${res}^L_K([\eta])$ is $R$-trivial. 

If
$L/K$ is not a Galois extension, then ${\bf G}^L_L\simeq {\bf Spin}(h)$ for some $h$ satisfying
all conditions in Example~\ref{example}. Therefore ${res}^L_K([\eta])$ is also $R$-trivial.

\smallskip

Now the sequence~(\ref{base}) induces the exact sequence
$$
({\bf S}^L\times {\bf G}^L)(K(x))
\longrightarrow 
({\bf S}^L \cdot {\bf G}^L)(K(x))\longrightarrow
H^1(K(x),{\bf Z}^L)\longrightarrow 
 H^1(K(x),{\bf G}^L),
$$
where $x$ is a variable over $K$.
Let 
$$
\eta(x)\in {\rm Ker}\,[H^1(K(x),{\bf Z}^L) \longrightarrow H^1(K(x),{\bf G}^L)]
$$
be defined at 0 and 1 and such that $\eta(0)=1$ and $\eta(1)=\eta$; such an element exists since $\eta$ is $R$-trivial. Take any element 
$g(x)\in ({\bf S}^L \cdot {\bf G}^L)(K(x))$ that is defined at 0 and 1 and whose image in $H^1(K(x), {\bf Z}^L)$ 
is $\eta(x)$. Note that $g(0)^{-1}g(1)$ is $R$-trivial in 
${\bf S}^L \cdot {\bf G}^L$.
By construction,  $g(0)=u_0$ and  $g(1)=u_1g$ for some
$u_0,u_1\in ({\bf S}^L\times {\bf G}^L)(K)$. Hence
$$
g=u_1^{-1}g(1)=u_1^{-1}g(0)\left(g(0)^{-1}g(1)\right)=u_1^{-1}u_0\left(g(1)g(0)^{-1}\right).
$$
Writing $u_1^{-1}u_0=us$ for some  $u\in{\bf G}^L(K)$ and  $s\in{\bf S}^L(K)$, and noting that ${\bf S}^L$ is a rational torus, 
the proof is complete upon setting $j=\left(sg(1)g(0)^{-1}\right)^{-1}$.
\end{proof}

We have thus altogether proved the following.
\begin{theorem}\label{stabilizer}
Let $L\subset A$ be a cubic subfield and let $g\in {\rm Str}(A)$ be such that $g(L)=L$. Then modulo $R$-trivial elements,
$g$ can be written as a product $g=g_1g_2$ where $g_1(L)=g_2(L)=L$,
$g_1\in N_{\bf H}({\bf G}^L)(K)$, and $g_2\in {\bf G}^L(K)$.
\end{theorem}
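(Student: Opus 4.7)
The plan is to assemble Theorem \ref{stabilizer} from three ingredients established earlier in this section: the structural description of ${\bf Str}(A,L)$, Lemma \ref{twistingD_4}, and Proposition \ref{lastreduction}. Given the hypothesis $g(L)=L$, we have $g \in {\bf Str}(A,L)(K)$, and all the work amounts to decomposing $g$ inside this stabilizer.

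First I would identify the algebraic structure of ${\bf Str}(A,L)$. Working over $K^{\sep}$ and using the conjugacy of ${\bf G}^L$ to the standard $\mathrm{D}_4$-subgroup inside the split form of type $\mathrm{E}_6$, one checks that the connected component of ${\bf Str}(A,L)$ is the almost direct product ${\bf S}^L \cdot {\bf G}^L$ (since ${\bf S}^L$ centralises ${\bf G}^L$ by construction), and that the component group is
\[{\bf Str}(A,L)/{\bf S}^L \cdot {\bf G}^L \;\simeq\; N_{\bf H}({\bf G}^L)/{\bf G}^L \;\simeq\; {\bf Out}({\bf G}^L) \;\simeq\; S_3.\]

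Next I would handle the component group. Since $L$ is a separable cubic field extension of $K$, the twisted group ${\bf G}^L$ has outer type $^{3,6}\mathrm{D}_4$, so Lemma \ref{twistingD_4} applies and yields the surjectivity of $N_{\bf H}({\bf G}^L)(K) \to {\bf Out}({\bf G}^L)(K)$. Picking $g_1 \in N_{\bf H}({\bf G}^L)(K)$ mapping to the same coset as $g$, the element $h := g_1^{-1} g$ lies in $({\bf S}^L \cdot {\bf G}^L)(K)$. Then I would apply Proposition \ref{lastreduction} to $h$ to obtain an $R$-trivial element $j \in ({\bf S}^L \cdot {\bf G}^L)(K)$ with $g_2 := hj \in {\bf G}^L(K)$. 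Rearranging,
\[g \;=\; g_1 h \;=\; g_1 g_2 j^{-1},\]
and since $j^{-1}$ is $R$-trivial, modulo $R$-trivial elements we get $g = g_1 g_2$ with $g_1 \in N_{\bf H}({\bf G}^L)(K)$ and $g_2 \in {\bf G}^L(K)$.

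Finally I would verify that both $g_1$ and $g_2$ stabilise $L$: clearly $g_2$ fixes $L$ pointwise, and for $g_1$, Lemma \ref{LACP} identifies $L$ as the common fixed set of ${\bf G}^L(K)$ acting on $A$, a set preserved by any element normalising ${\bf G}^L(K)$. The argument is essentially a bookkeeping exercise, with all the substantive work packed into the preceding lemmas. The only subtle point to watch is that the cocycle twisting the split $\mathrm{D}_4$-subgroup to ${\bf G}^L$ has image in $S_3$ containing a $3$-cycle, so that Lemma \ref{twistingD_4} truly applies; this is forced by $L/K$ being a cubic field extension.
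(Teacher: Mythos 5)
Your proof follows the paper's argument exactly: identifying the identity component and component group of ${\bf Str}(A,L)$, invoking Lemma~\ref{twistingD_4} to reach $({\bf S}^L\cdot{\bf G}^L)(K)$ modulo $N_{\bf H}({\bf G}^L)(K)$, and then applying Proposition~\ref{lastreduction}. The extra verification that $g_1$ and $g_2$ stabilise $L$ (via Lemma~\ref{LACP}) and the remark on the outer type of ${\bf G}^L$ are correct clarifications of points the paper leaves implicit, but the route is the same.
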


\section{The End of the Proof}

We now finish the proof that ${\bf Str}(A)$ is $R$-trivial. To begin with, we recall the following fact about automorphisms of Albert algebras, including its
easy proof for convenience.

\begin{lemma}\label{cubicsubfield} Let $A$ be a division Albert algebra and let $g\in {\bf H}(K)$. Then $g$ fixes a cubic subfield in $A$ pointwise.
\end{lemma}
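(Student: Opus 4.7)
The plan is to show that the fixed subspace $A^g\subset A$ strictly contains $K\cdot 1$. Once this is done, I take any $a\in A^g\setminus K\cdot 1$; because $A$ is division, the commutative associative subalgebra $K[a]$ generated by $a$ has dimension at most $3$ by power-associativity and the generic cubic relation, and at least $2$ since $a\notin K$. By the classification of nontrivial subalgebras of a division Albert algebra recalled in Section \ref{9-dimensional}, the allowed dimensions are $3$, $9$, or $27$, so $K[a]$ must be a cubic subfield. Since $g$ is an algebra homomorphism fixing $1$ and $a$, it then fixes $K[a]$ pointwise.

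Let $T:A\to K$ be the Jordan trace, and set $A_0=\{a\in A:T(a)=0\}$, the $26$-dimensional trace-zero subspace. As $g$ is an automorphism, it preserves the norm, hence $T$, hence the decomposition $A=K\cdot 1\oplus A_0$. The task thus reduces to exhibiting a nonzero element of $A_0^g$. Because $g$ acts $K$-linearly, $A_0^g\otimes_K K^{\sep}=(A_0\otimes_K K^{\sep})^g$, so it suffices to produce a nonzero fixed vector over $K^{\sep}$.

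Over $K^{\sep}$, write the Jordan decomposition $g=g_sg_u$, with $g_s$ semisimple and $g_u$ unipotent inside ${\bf H}(K^{\sep})$. The element $g_s$ lies in some maximal torus ${\bf T}\subset {\bf H}_{K^{\sep}}$. The representation of ${\bf H}$ on $A_0$ is the $26$-dimensional irreducible fundamental representation of the group of type $\mathrm{F}_4$, whose weights with respect to ${\bf T}$ are the $24$ short roots, each of multiplicity one, together with the zero weight of multiplicity two. Consequently the $1$-eigenspace of $g_s$ on $A_0\otimes K^{\sep}$ contains the $2$-dimensional zero weight space of ${\bf T}$, and in particular is nonzero. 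The unipotent part $g_u$ commutes with $g_s$, hence preserves this eigenspace; since $g_u-1$ is nilpotent on any invariant subspace, $g_u$ has a nonzero fixed vector there. This yields the desired nonzero element of $(A_0\otimes K^{\sep})^g$.

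The only nonformal ingredient is the classical computation of the weight multiplicities of the $26$-dimensional representation of $\mathrm{F}_4$ (the $24$ short roots together with the zero weight of multiplicity two); beyond this, everything reduces to a standard Jordan decomposition and dimension count, so no real obstacle is expected.
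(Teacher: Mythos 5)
Your proof is correct, and it reaches the same conclusion by a closely related but not identical route. Both arguments reduce to producing a nonscalar fixed vector in $A$, passing through $K^{\sep}$ and the action of a maximal torus. The paper takes a shortcut: since $A$ is a division algebra, ${\bf H}$ is $K$-anisotropic, so over a perfect base every $g\in {\bf H}(K)$ is automatically semisimple; this lets the paper put $g$ directly into a maximal $K$-torus and use an explicit reduced model of $A\otimes_K K^{\sep}$ to identify the pointwise-fixed subalgebra $W\simeq K^{\sep}\times K^{\sep}\times K^{\sep}$. You instead run the Jordan decomposition $g=g_sg_u$ over $K^{\sep}$ and invoke the weight-multiplicity data of the $26$-dimensional representation of $\mathrm{F}_4$ (the two-dimensional zero-weight space), then handle $g_u$ by unipotence. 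Your version is marginally more self-contained, since it does not appeal to the fact that anisotropy forces semisimplicity, but the unipotent step is dead weight under the paper's conventions (char.~$0$ or at least a perfect infinite base with ${\bf H}$ anisotropic, where $g_u=1$). Everything else matches: the Galois-descent step from $K^{\sep}$ to $K$, and the classification of proper unital subalgebras of a division Albert algebra, which forces $K[a]$ to be a cubic subfield once $a$ is a nonscalar fixed element.
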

\begin{proof}
Since ${\bf H}$ is $K$-anisotropic, $g$ is semisimple, hence contained
in a maximal $K$-torus ${\bf T}\subset {\bf H}$. Using an explicit reduced model 
of the split Albert algebra $A\otimes_K K^{sep}$, one can easily check that over $K^{sep}$, every element 
in ${\bf T}$ fixes a commutative subalgebra 
$$W\simeq K^{sep}\times K^{\sep}\times K^{sep}$$
of $ A\otimes_K K^{sep}$ pointwise. It follows that $g$ fixes a vector $v\in A$ which is not a scalar multiple of the identity, whence
 it fixes the cubic subfield $K(v)\subset A$ generated by $v$.
\end{proof}

In order to conclude, we need one final ingredient. Let $A$ be an Albert algebra over $K$ and let $p\in A^\times$. Recall that this is equivalent to $N(p)\neq 0$, where $N$ is the
cubic norm of $A$. The \emph{isotope} $A^{(p)}$ of $A$ is the algebra with underlying linear space $A$, and multiplication
\[x\bullet_p y=x(yp)+y(xp)-(xy)p,\]
where juxtaposition denotes the multiplication of $A$. It is known that $A^{(p)}$ is an Albert algebra, and that the cubic norm of $A^{(p)}$ is $\nu N$, where $\nu=N(p)$. From this it follows
that $A^{(p)}$ is a division algebra if and only if $A$ is, and that ${\bf Str}(A)\simeq {\bf Str}(A^{(p)})$.

\begin{proposition}\cite{Thakur3} Let $A$ be a division Albert algebra over a field $K$. 
Then there exists $p\in A^\times$ such that $A^{(p)}$ contains a cubic cyclic subfield.
\end{proposition}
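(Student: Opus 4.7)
My plan is to find $p \in A^\times$ such that the isotope $A^{(p)}$ is a first Tits construction over $K$; that is, such that $A^{(p)}$ contains a $9$-dimensional subalgebra of the form $D^+$ with $D$ a central simple algebra of degree $3$ over $K$. Once this reduction is achieved, I would invoke Wedderburn's classical theorem that every central simple algebra of degree $3$ is cyclic: this produces a cyclic cubic subfield $L \subset D$, and then $L \subset D^+ \subset A^{(p)}$ is the desired cyclic cubic subfield.

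By the classification of $9$-dimensional subalgebras recalled in Section~\ref{9-dimensional}, every such subalgebra of the division Albert algebra $A$ has the form $D^+$ (first Tits type, with $D$ central simple of degree $3$ over $K$) or $B_\sigma^+$ (second Tits type, with $B$ central simple of degree $3$ over a genuine quadratic field extension $E/K$). If $A$ itself already admits a subalgebra of the first form, we simply take $p = 1$, so it suffices to treat the case where every $9$-dimensional subalgebra of $A$ is of the second form.

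In that case, the key flexibility comes from the fact that, although $A$ and $A^{(p)}$ share the same structure group up to $K$-isomorphism, they are typically non-isomorphic as Jordan algebras: their cubic norms differ by the scalar factor $N(p)$, and the collection of $9$-dimensional subalgebras of $A^{(p)}$ genuinely differs from that of $A$. The strategy is to choose $p$ so that in $A^{(p)}$ the quadratic étale algebra arising in the second-Tits-construction datum ``becomes split,'' i.e., so that a first-Tits-type subalgebra emerges.

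The main obstacle is the explicit construction of $p$, or an existence argument via Galois cohomology. The natural approach is to track how the second-Tits-construction datum $(E, B, \sigma, u, \mu)$ realising $A$ transforms under isotopy, and to identify a cohomological obstruction whose vanishing is equivalent to the existence of a first-Tits-type subalgebra of $A^{(p)}$. Analysing how this obstruction depends on $N(p)$, one would argue that it can be trivialised by a suitable choice of $p \in A^\times$; the technical heart lies in identifying this obstruction precisely and verifying that the induced map $p \mapsto \text{obstruction}(A^{(p)})$ hits the required class.
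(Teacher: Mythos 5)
The statement you are trying to prove is not proved in the paper at all: it is quoted verbatim from Thakur's preprint \emph{The cyclicity problem for Albert algebras} (\cite{Thakur3}), and the only additional comment the authors make is the remark that one may take $p=1$ when the base field contains a primitive cube root of unity, by Petersson--Racine \cite{PetR}. So there is no ``paper proof'' to compare against; the question is whether your sketch could in principle succeed.

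It cannot, for a concrete reason. Your plan hinges entirely on choosing $p$ so that $A^{(p)}$ becomes a \emph{first} Tits construction, i.e.\ contains a subalgebra $D^+$ with $D$ central simple of degree $3$ over $K$. But whether an Albert algebra is a first Tits construction is detected by the mod-$2$ cohomological invariant $f_3(A)\in H^3(K,\mathbb{Z}/2)$ (equivalently, by the octonion coordinate algebra of the reduced model): $A$ is a first Tits construction if and only if $f_3(A)=0$. Crucially, $f_3$ (like $f_5$ and $g_3$) depends only on the \emph{isotopy} class of $A$, so $f_3(A^{(p)})=f_3(A)$ for every $p\in A^\times$. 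Therefore, in exactly the case you say ``it suffices to treat'' --- where every $9$-dimensional subalgebra of $A$ is of type $B_\sigma^+$ with $E/K$ a field, which is precisely the case $f_3(A)\neq 0$ --- no isotope $A^{(p)}$ will ever acquire a $D^+$ subalgebra. The ``quadratic \'etale algebra becoming split under isotopy'' that you hope for is obstructed by an isotopy invariant, so the heart of your argument is vacuous.

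The step you do have right is the easy direction: if $D^+\subset A^{(p)}$ with $D$ a degree-$3$ division algebra, then Wedderburn's theorem gives a cyclic maximal subfield $L\subset D$, and $L$ is automatically a (cyclic cubic) Jordan subalgebra of $D^+\subset A^{(p)}$ since the Jordan product restricts to the associative product on a commutative subalgebra. What is missing is the genuinely difficult case: one must produce a cyclic cubic subfield sitting inside a subalgebra of the form $B_\sigma^+$ with $E/K$ a nonsplit quadratic extension --- concretely, a $\sigma$-invariant maximal subfield $L\cdot E\subset B$ with $L/K$ cyclic cubic --- and control how the Tits process data $(B,\sigma,u,\mu)$ and the resulting $\sigma$-stable maximal subfields vary under isotopy. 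That is the content of Thakur's argument, and your sketch does not engage with it.
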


\begin{remark} 
If the ground field $F$ contains a cubic root of unity, then due to
a result of H.\ Petersson and M.\ Racine \cite{PetR}, one can take $p=1$.
\end{remark}

We are now ready to prove our main result.

\begin{theorem}\label{main} Let $A$ be a division Albert algebra over $K$.
Then  ${\bf Str}(A)$ is an $R$-trivial group.
\end{theorem}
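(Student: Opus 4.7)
The plan is to assemble the machinery from the earlier sections: (i) the isotope trick to ensure a cubic cyclic subfield exists in $A$; (ii) the reduction of Section \ref{reductionF_4}, culminating in Theorem \ref{stabilizer}, to reduce an arbitrary $g \in \mathrm{Str}(A)$ to an element of $N_\mathbf{H}(\mathbf{G}^L)(K) \cdot \mathbf{G}^L(K)$ modulo $R$-trivial elements; (iii) the Pfister form analysis of Examples \ref{Pfisterforms}--\ref{example} together with the norm principle (Theorem \ref{normprinciple}) to handle the resulting $\mathrm{D}_4$-part; and (iv) Lemma \ref{twistingD_4} to handle the outer part. It suffices to verify $R$-triviality at the level of $K$-points, since the argument will apply verbatim to every field extension of $K$.

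Concretely, first I would invoke the cited result of Thakur to replace $A$ with an isotope $A^{(p)}$ containing a cubic cyclic subfield $L_0$; this is permissible because $\mathbf{Str}(A) \simeq \mathbf{Str}(A^{(p)})$. Next, for $g \in \mathrm{Str}(A)$, I apply the reduction preceding Theorem \ref{stabilizer}: after multiplying by $R$-trivial elements (the $s$ from weak Skolem--Noether and the torus element $t$), $g$ stabilizes a cubic subfield $L$ generated by $g(1)$, and Theorem \ref{stabilizer} further decomposes $g$ modulo $R$-trivial as $g_1 g_2$ with $g_1 \in N_\mathbf{H}(\mathbf{G}^L)(K)$ and $g_2 \in \mathbf{G}^L(K)$. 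Using weak Skolem--Noether (Theorem \ref{comingback}) to transport isomorphic cubic subfields by $R$-trivial elements of $\mathbf{Str}(A, B_\sigma^+)^\circ$ (Proposition \ref{structuralgroup}), and invoking the cyclic $L_0$ from the isotope step, I would arrange that $L$ is cyclic cubic.

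With $L/K$ cyclic cubic, $\mathbf{G}^L$ is of type $^3\mathrm{D}_4$ and becomes strongly inner $\mathbf{Spin}(h)$ for a $3$-fold Pfister form $h$ after base change to $L$. For $g_2 \in \mathbf{G}^L(K)$: I would track its image through the exact sequence \eqref{base} into $H^1(K,\mathbf{Z}^L)$, use Example \ref{Pfisterforms} to deduce that its restriction to $L$ is $R$-trivial, and descend back to $K$ using the norm principle (Theorem \ref{normprinciple}), mimicking the mechanism in the proof of Proposition \ref{lastreduction}. This yields $g_2 \in R\mathbf{Str}(A)(K)$. For $g_1 \in N_\mathbf{H}(\mathbf{G}^L)(K)$: Lemma \ref{twistingD_4} supplies an explicit $K$-lift of each element of the outer $S_3$-action on $\mathbf{G}^L$, and the cyclic structure of $L/K$ should let me take such a lift to be $R$-trivial. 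Combining the two contributions, $g$ itself is $R$-trivial in $\mathbf{Str}(A)$.

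The main obstacle will be arranging $L$ to be cyclic in the second step: Theorem \ref{comingback} transports only isomorphic cubic subfields, yet $g(1)$ need not a priori generate a cyclic cubic subfield of $A$. Navigating this will likely require either a further application of the isotope trick tailored to the element $g$, or a refinement of the reduction of Section \ref{reductionF_4} permitting the cubic subfield $L$ to be chosen freely within its isomorphism class. Once $L$ is cyclic, the Pfister form analysis combined with Lemma \ref{twistingD_4} yields the conclusion cleanly.
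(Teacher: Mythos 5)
Your proposal correctly assembles the ingredients from the preceding sections and, to your credit, you pinpoint the genuine difficulty: the cubic subfield $L$ generated by $g(1)$ in the reduction of Section~\ref{reductionF_4} need not be cyclic, and Theorem~\ref{comingback} only transports \emph{isomorphic} cubic subfields, so you cannot simply move $L$ onto the cyclic $L_0$ produced by the isotope trick. However, the resolution you sketch does not work, and the way you propose to finish has two further problems.

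First, the reduction to $\mathbf{H}(K)$ does \emph{not} require the $L$ generated by $g(1)$ to be cyclic. Proposition~\ref{lastreduction} is deliberately written to cover both the Galois and non-Galois cases: restricting $[\eta]$ to $L$ gives either a $3$-fold Pfister form (Example~\ref{Pfisterforms}) when $L/K$ is Galois, or the codimension-$2$ subform situation of Example~\ref{example} when it is not, and in both cases the class is $R$-trivial. So there is no need to ``arrange $L$ to be cyclic'' at this stage, and pursuing that would be a dead end.

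Second, the real missing step is what happens \emph{after} the reduction, once $g \in \mathbf{H}(K)$. Your plan to show $g_2 \in \mathbf{G}^L(K)$ is $R$-trivial by tracking its class in $H^1(K,\mathbf{Z}^L)$ fails, because an element of $\mathbf{G}^L(K)$ lifts trivially to $(\mathbf{S}^L\times\mathbf{G}^L)(K)$ and so has \emph{trivial} image in $H^1(K,\mathbf{Z}^L)$; the Pfister form plus norm principle machinery of Proposition~\ref{lastreduction} only converts an element of $(\mathbf{S}^L\cdot\mathbf{G}^L)(K)$ into one of $\mathbf{G}^L(K)$ modulo $R$-trivial elements, it says nothing about $R$-triviality of $\mathbf{G}^L(K)$ itself (which is anisotropic). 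Similarly, ``the cyclic structure of $L/K$ should let me take such a lift to be $R$-trivial'' is wishful: Lemma~\ref{twistingD_4} gives a $K$-point of $N_\mathbf{H}(\mathbf{G}^L)$ inducing a nontrivial outer automorphism, but provides no $R$-triviality. The paper's actual mechanism, which you are missing, is a fresh case analysis once $g\in\mathbf{H}(K)$: fix the cyclic cubic $L$ from the isotope step (unrelated to $g(1)$) and distinguish whether $g$ stabilizes $L$ nontrivially, fixes $L$ pointwise, or moves $L$. The key auxiliary input is Lemma~\ref{cubicsubfield}, which says that every $g\in\mathbf{H}(K)$ fixes \emph{some} cubic subfield $F$ pointwise; when $g$ stabilizes $L$ but not pointwise, $L$ and $F$ generate a $9$-dimensional subalgebra $B_\sigma^+$ stabilized by $g$, placing $g$ in the $R$-trivial group $\mathbf{Str}(A,B_\sigma^+)^\circ(K)$ of Proposition~\ref{structuralgroup}. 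That case is then bootstrapped, via the outer automorphism from Lemma~\ref{twistingD_4} (here is where cyclicity of $L$ enters), to handle $g$ fixing $L$ pointwise, and Theorem~\ref{comingback} together with Theorem~\ref{stabilizer} reduces the remaining case to the first two. Without Lemma~\ref{cubicsubfield} and this three-way split, the argument does not close.
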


\begin{proof}
By the above discussion, we may replace $A$ by any isotope $A^{(p)}$. Thus by the preceding proposition we may assume that $A$ contains a cubic cyclic subfield $L$.
Let $g\in {\rm Str}(A)$. By the results of Section \ref{reductionF_4}, summarized in Theorem \ref{stabilizer},  we may assume that
$g\in {\bf H}(K)$. Three cases are possible.
\smallskip

\noindent
{\it Case $1$}: $g$ fixes $L$, but not pointwise.  Let $F\subset A$ be a cubic field extension 
of $K$ pointwise fixed by $g$; such a field exists by Lemma \ref{cubicsubfield}.
Since $F\not=L$, $L$ and $F$ generate a 9-dimensional subalgebra $B_\sigma^+\subset A$ that is stabilized by $g$. 
In the course of the proof of Corollary~\ref{surjectivity}
we saw that $${\bf Str}(A,B_\sigma^+)(K)={\bf Str}(A,B_\sigma^+)^\circ(K).$$
By Proposition~\ref{structuralgroup}, the latter group is $R$-trivial, and therefore $g$ is $R$-trivial.

\smallskip

\noindent
{\it Case $2$}:
 $g$ fixes $L$ pointwise. Hence $g$ is in the group
$$
{\bf Str}(A/L)=\{x\in {\bf Str}(A)\ | \ x(l)=l \ \ \mbox{for all}\ l\in L\}={\bf G}^L.
$$
Since $L$ is cyclic, by Lemma~\ref{twistingD_4} there is an element $h_1\in N_{\bf H}({\bf G}^L)(K)$
that stabilizes $L$, but does not fix it pointwise. The same is true for $h_2:=h_1^{-1}g$, since $g$ fixes $L$ pointwise. 
From Case (1) we know that $h_1$ and $h_2$ are $R$-trivial, and hence so is $g$.

\smallskip

\noindent
{\it Case 3}: $g(L)\not=L$.  Since $g\in {\rm Aut}(A)$, the field $g(L)$ is a cubic cyclic subfield 
of $A$ isomorphic to $L$.  The subfields $L$ and $g(L)$ generate a 9-dimensional subalgebra $B_\sigma^+$ of $A$.
 By Theorem~\ref{comingback} there exists $h_1\in
 {\rm Str}(A,B_\sigma^+)$ such that $h_1(g(L))=L$, and by Proposition~\ref{structuralgroup}, $h_1$ is $R$-trivial. Let
 $h_2:=h_1g$. By construction it belongs to
 $$
 {\bf Str}(A,L)(K)=\{ x\in {\rm Str}(A)\ | \ x(L)=L\}.
 $$
 By Theorem~\ref{stabilizer}, $h_2$ can be written, modulo $R$-trivial elements, as a product 
 $h_2=h_3h_4$ with $h_3\in {\bf G}^L(K)$ and $h_4\in N_{\bf H}({\bf G}^L)(K)$.
 In particular $h_3$ and $h_4$ are in ${\bf H}(K)$ and stabilize $L$. By Cases (1) and (2),  the elements $h_3$ and $h_4$
 are $R$-trivial, and hence so is $h_2$ and $g$. This completes the proof.
\end{proof}

\begin{theorem}\label{adjointcase} The group $\overline{\bf G}$ is $R$-trivial.
\end{theorem}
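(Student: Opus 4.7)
The plan is to deduce this directly from Theorem \ref{main} by transporting $R$-equivalence through the canonical projection $\pi : {\bf Str}(A) \to \overline{\bf G}$. Recall from the introduction that $\overline{\bf G} = {\bf Str}(A)/{\bf G}_m$, where the ${\bf G}_m$ in question is the central subgroup of homotheties. In particular, we have a central extension
\[
1 \longrightarrow {\bf G}_m \longrightarrow {\bf Str}(A) \stackrel{\pi}{\longrightarrow} \overline{\bf G} \longrightarrow 1
\]
of algebraic groups over $K$.

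Fix an arbitrary field extension $F/K$ and an element $\bar{g} \in \overline{\bf G}(F)$; I aim to show that $\bar{g}$ is $R$-equivalent to $1$. The long exact cohomology sequence attached to the displayed extension yields
\[
{\bf Str}(A)(F) \stackrel{\pi_F}{\longrightarrow} \overline{\bf G}(F) \longrightarrow H^1(F, {\bf G}_m),
\]
and $H^1(F, {\bf G}_m) = 1$ by Hilbert 90. Hence $\pi_F$ is surjective and one can choose a lift $g \in {\bf Str}(A)(F)$ of $\bar{g}$.

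Now I would invoke Theorem \ref{main}, which (after the standard reduction of \cite{ChPl} to handle the isotropic case, so that one may assume $A$ is a division algebra) asserts that ${\bf Str}(A)(F)/R = 1$. Thus there is a rational map $f : \mathbb{A}^1_F \dashrightarrow {\bf Str}(A)$ defined at $0$ and $1$ with $f(0) = 1$ and $f(1) = g$. Composing with $\pi$ yields a rational map $\pi \circ f : \mathbb{A}^1_F \dashrightarrow \overline{\bf G}$, still defined at $0$ and $1$, with $(\pi \circ f)(0) = 1$ and $(\pi \circ f)(1) = \bar{g}$. This exhibits $\bar{g}$ as $R$-equivalent to $1$ in $\overline{\bf G}(F)$, and since $F/K$ was arbitrary, $\overline{\bf G}$ is $R$-trivial.

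There is essentially no obstacle here: the whole argument is the elementary observation that $R$-triviality descends along a surjection of $F$-points for every $F$, and the surjectivity is secured by Hilbert 90 since the kernel is a split torus. The substantive input is entirely contained in Theorem \ref{main}.
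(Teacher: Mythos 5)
Your proof is correct and is essentially the paper's own argument: both use Hilbert 90 for the central $\gm$ kernel to transfer $R$-triviality from ${\bf Str}(A)$ (Theorem \ref{main}) down to $\overline{\bf G}$. The paper phrases the consequence of $H^1(F,\gm)=1$ as the existence of a rational section of $\pi$, while you state the surjectivity of $\pi_F$ and push paths forward, but these are two ways of expressing the same elementary observation.
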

\begin{proof} Indeed, the canonical map
${\bf Str}(A) \to \overline{\bf G}$ has a rational section, since its kernel  ${\bf G}_m$
has trivial Galois cohomology in dimension 1. We conclude with the above theorem.
\end{proof}

\section{Applications}

\subsection{The Kneser--Tits Problem for $\mathrm{E}_{7,1}^{78}$ and $\mathrm{E}_{8,2}^{78}$}

\begin{theorem} Let ${\bf G}$ be an algebraic group of type 
$\mathrm{E}_{7,1}^{78}$ or $\mathrm{E}_{8,2}^{78}$ defined over a field $K$ of an 
arbitrary characteristic. Then the Kneser--Tits conjecture for ${\bf G}$ holds.
\end{theorem}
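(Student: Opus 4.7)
The plan is to deduce this theorem directly from the main theorem (Theorem \ref{main}) together with the two reductions recorded in the introduction, both due to P.~Gille. Concretely, I would invoke the following two facts in sequence. First, by point (2) of the introduction, the Kneser--Tits conjecture for groups of type $\mathrm{E}_{7,1}^{78}$ and $\mathrm{E}_{8,2}^{78}$ in arbitrary characteristic follows from the characteristic zero case, so I may assume $\mathrm{char}\,K=0$. Second, by point (1) together with Gille's analysis of Whitehead groups of exceptional groups in \cite{gille}, the Whitehead group $\mathbf{G}(K)/\mathbf{G}(K)^+$ of such a group $\mathbf{G}$ is isomorphic to ${\bf Str}(A)(K)/R$ for a suitable Albert algebra $A$ (up to passage to the anisotropic kernel of $\mathbf{G}$, which is of type ${}^1\mathrm{E}_6$).

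Putting these two ingredients together, the theorem reduces to the statement that ${\bf Str}(A)$ is $R$-trivial, which is precisely the content of Theorem \ref{main}. Since that theorem is valid over an arbitrary field extension of $K$, in particular it yields ${\bf Str}(A)(K)/R=1$, so $\mathbf{G}(K)=\mathbf{G}(K)^+$, as desired.

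The only non-formal step here is the translation between $\mathbf{G}(K)/\mathbf{G}(K)^+$ and ${\bf Str}(A)(K)/R$. In the anisotropic case of type ${}^1\mathrm{E}_6$ this is the content of Gille's computation in \cite{gille}, which identifies the Whitehead group of such a $\mathbf{G}$ with the $R$-equivalence classes of its simply connected anisotropic kernel, together with the identification of that anisotropic kernel (up to isogeny and central factor $\mathbf{G}_m$) with the derived group of ${\bf Str}(A)$ for an Albert division algebra $A$. I would state the application in this form, citing \cite{gille} for the two reductions, and close by noting that the same argument (combined with Theorem \ref{adjointcase}) handles the adjoint and simply connected incarnations uniformly.
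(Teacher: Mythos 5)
Your proposal is correct and follows essentially the same route as the paper: reduce to characteristic zero via Gille, translate the Kneser--Tits problem into the triviality of $R$-equivalence classes of the (adjoint form of the) anisotropic kernel of type ${}^1\mathrm{E}_6$ via Gille's Théorème 7.2, and conclude from Theorem~\ref{main} (equivalently Theorem~\ref{adjointcase}). The paper spells out the intermediate isomorphisms $\mathbf{G}(K)/R \simeq C_{\bf S}(\mathbf{G})/R \simeq \overline{\bf H}/R$ explicitly using the Bruhat--Tits decomposition, and phrases the final step in terms of the adjoint group $\overline{\bf G}={\bf Str}(A)/\mathbf{G}_m$ rather than ${\bf Str}(A)$ itself; these differ only by the central $\mathbf{G}_m$, which has a rational section, so the two formulations are equivalent and you note this.
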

\begin{proof} According to \cite[Remarque 7.4]{gille} we may, without loss of generality, assume that ${\rm char}(K)=0$. The Tits index of ${\bf G}$ is of the form
$$
\begin{array}{ll}
\begin{picture}(125,55)
\put(00,02){\line(1,0){100}}
\put(60,02){\line(0,1){20}}
\put(00,02){\circle*{3}}
\put(20,02){\circle*{3}}
\put(40,02){\circle*{3}}
\put(60,02){\circle*{3}}
\put(80,02){\circle*{3}}
\put(100,02){\circle*{3}}
\put(60,22){\circle*{3}}
\put(00,02){\circle{10}}
\put(-5,-10){$\alpha_7$}
\put(15,-10){$\alpha_6$}
\put(35,-10){$\alpha_5$}
\put(55,-10){$\alpha_4$}
\put(75,-10){$\alpha_3$}
\put(95,-10){$\alpha_1$}
\put(55,30){$\alpha_2$}
\end{picture}\\
\end{array}
$$
or
$$
\begin{array}{ll}
\begin{picture}(125,55)
\put(00,02){\line(1,0){120}}
\put(80,02){\line(0,1){20}}
\put(00,02){\circle*{3}}
\put(20,02){\circle*{3}}
\put(40,02){\circle*{3}}
\put(60,02){\circle*{3}}
\put(80,02){\circle*{3}}
\put(100,02){\circle*{3}}
\put(120,02){\circle*{3}}
\put(80,22){\circle*{3}}
\put(00,02){\circle{10}}
\put(20,02){\circle{10}}
\put(-5,-10){$\alpha_8$}
\put(15,-10){$\alpha_7$}
\put(35,-10){$\alpha_6$}
\put(55,-10){$\alpha_5$}
\put(75,-10){$\alpha_4$}
\put(95,-10){$\alpha_3$}
\put(115,-10){$\alpha_1$}
\put(75,30){$\alpha_2$}
\end{picture}.
\end{array}
$$

\bigskip

\medskip

\noindent
In both cases the semisimple anisotropic kernel ${\bf H}$ of ${\bf G}$ is a strongly
inner form of type $\mathrm{E}_6$. If ${\bf S}\subset {\bf G}$ is a maximal split torus 
whose centralizer is ${\bf H}$, then arguing as in \cite{ChPl} 
one easily verifies that over an algebraic closure
of $K$ the intersection ${\bf S}\cap {\bf H}$ is the centre of ${\bf H}$. 

Furthermore, by \cite[Th\'eor\`eme 7.2]{gille},
the Kneser--Tits problem has an affirmative answer if and only if
${\bf G}(K)/R=1$. It follows from the Bruhat-Tits decomposition that
$$
{\bf G}(K)/R\simeq C_{\bf S}({\bf G})/R \simeq ({\bf H}/(C_{\bf S}({\bf G})\cap {\bf H}))/R
\simeq \overline{{\bf H}}/R
$$
where $\overline{{\bf H}}$ is the corresponding adjoint group. It remains to note
that by Theorem~\ref{adjointcase}, the group $\overline{{\bf H}}$ is $R$-trivial.
\end{proof}

\subsection{The Tits--Weiss Conjecture}

\begin{theorem} 
Let $A$ be an Albert algebra defined over a field $K$ of an arbitrary 
characteristic. Then the group ${\bf Str}(A)(K)$ is generated by the
$U$-operators $U_a$ with $a\in A^\times$, and the scalar homotethies.
\end{theorem}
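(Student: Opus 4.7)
The plan is to deduce the Tits--Weiss conjecture from the preceding Kneser--Tits theorem via the equivalence between these two conjectures established in the Appendix by R.\ Weiss.

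First, I would invoke this equivalence, which asserts that the Tits--Weiss conjecture for an Albert algebra $A$ is equivalent to the Kneser--Tits conjecture for the corresponding isotropic simple simply connected algebraic group of type $\mathrm{E}^{78}_{7,1}$ (or $\mathrm{E}^{78}_{8,2}$), whose semisimple anisotropic kernel is a strongly inner form of type $\mathrm{E}_6$ built from the structure group of $A$. As emphasized in the introduction, this equivalence is proved in a characteristic-free manner, so it applies over $K$ of arbitrary characteristic, and no separate reduction-to-characteristic-zero step is needed at this stage.

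Second, I would apply the preceding theorem in this section, which establishes the Kneser--Tits conjecture for groups of type $\mathrm{E}^{78}_{7,1}$ and $\mathrm{E}^{78}_{8,2}$ over an arbitrary base field. Combining the equivalence with this theorem immediately yields the desired statement that $\mathbf{Str}(A)(K)$ is generated by the $U$-operators $U_a$ with $a\in A^\times$, together with the scalar homotheties.

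The substantive content of the result has already been carried out in the earlier parts of the paper: specifically, the $R$-triviality of $\mathbf{Str}(A)$ established in Theorem~\ref{main} (which required the detailed analysis of 9-dimensional subalgebras, conjugacy of maximal tori, and the stabilizer subgroups in Sections~\ref{preliminaries}--\ref{reductionF_4}), together with P.\ Gille's results on Whitehead groups from \cite{gille} that bridge $R$-triviality to Kneser--Tits, and the characteristic-free equivalence proved in the Appendix. The present theorem is a formal consequence once all three ingredients are in place, so there is no genuinely new obstacle to overcome here; the only ``main obstacle'' was hidden in Theorem~\ref{main}, whose proof occupied most of the paper.
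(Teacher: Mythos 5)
Your proposal matches the paper's proof exactly: invoke the characteristic-free equivalence between the Tits--Weiss and Kneser--Tits conjectures established in the Appendix, and combine it with the Kneser--Tits theorem for $\mathrm{E}^{78}_{7,1}$ and $\mathrm{E}^{78}_{8,2}$ proved just above. The surrounding commentary on where the real work lies is accurate but not part of the formal argument.
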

\begin{proof}
By the main result of the Appendix the Tits--Weiss conjecture holds
if and only if the Kneser--Tits conjecture holds for groups of type
$\mathrm{E}_{7,1}^{78}$ and $\mathrm{E}_{8,2}^{78}$. The result follows.
\end{proof}

\begin{remark} From this theorem, which we have now established in arbitrary characteristic, the $R$-triviality of ${\bf Str}(A)$ is immediate. Thus Theorem \ref{main} holds
in arbitrary characteristic as well.
\end{remark}

\subsection{Properties of the Functor of $R$-Equivalence Classes for Strongly Inner
Forms of Type $^1\mathrm{E}_6$} To a reductive algebraic group ${\bf G}$ over a field $K$
one can attach the functor of $R$-equivalence classes
$$
\mathcal{G}/R: Fields/K \longrightarrow Groups,\ \ \ F/K \to {\bf G}(F)/R
$$
where $Fields/K$ is the category of field extensions of $K$ and $Groups$ is
the category of abstract groups. The experts expect that the following conjectures
hold.

\medskip

\noindent
{\it Conjecture $1$. The functor $\mathcal{G}/R$ factors through the subcategory
of abelian groups of the category $Groups$, i.e.\ for all field extensions $F/K$
the group ${\bf G}(F)/R$ is abelian.}

\smallskip

\noindent
{\it Conjecture $2$. If $F$ is a finitely generated field over its prime subfield,
then ${\bf G}(F)/R$ is finite.}

\smallskip

\noindent
{\it Conjecture $3$. The functor $\mathcal{G}/R$ has transfers, i.e.\
there is a functorial collection of maps ${\rm tr}^F_K: {\bf G}(F)/R \to {\bf G}(K)/K$ 
for all finite field extensions $F/K$.}

\medskip

Furthermore, one expect that the norm principle holds for all semisimple
algebraic groups. 
Of course, all these conjectures are obviously true for $R$-trivial groups.
In particular they hold for rational groups. For instance, this is the case
for groups of type $\mathrm{G}_2$. In \cite{ACP} we investigated the case of groups
of type $\mathrm{F}_4$ arising from the first Tits construction. Here we consider
the next case of simple simply connected strongly inner forms  of type $\mathrm{E}_6$.

\begin{theorem} Let ${\bf G}$ be a simple simply connected strongly inner form of type $\mathrm{E}_6$
over a field $K$ of arbitrary characteristic. Then ${\bf G}(K)/R$ is an abelian group.
\end{theorem}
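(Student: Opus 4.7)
The plan is to realize ${\bf G}$ as the derived subgroup of the structure group of an Albert algebra, and then extract abelianness from the $R$-triviality of the full structure group (Theorem~\ref{main}) together with the elementary observation that the quotient ${\bf Str}(A)/{\bf G}$ is abelian.

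First I would invoke the classical fact that every simple simply connected strongly inner form of type $\mathrm{E}_6$ over $K$ arises as ${\bf G} = [{\bf Str}(A),{\bf Str}(A)]$ for some Albert algebra $A$ over $K$; equivalently, the natural map $H^1(K,{\bf F}_4) \to H^1(K,\mathrm{E}_6^{sc})$ induced by the inclusion of the automorphism group of the split Albert algebra into its structure group is surjective. By the remark immediately preceding this theorem, Theorem~\ref{main} is available in arbitrary characteristic, so this reduction makes the $R$-triviality of ${\bf Str}(A)$ applicable here as well.

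Next, given any pair $g,h \in {\bf G}(K)$, I would use the $R$-triviality of ${\bf Str}(A)$ to produce rational maps $\alpha,\beta \colon \mathbb{A}^1 \dashrightarrow {\bf Str}(A)$, each defined at $0$ and $1$, with $\alpha(0) = \beta(0) = 1$, $\alpha(1) = g$, and $\beta(1) = h$. Form the commutator $c(t) := \alpha(t)\beta(t)\alpha(t)^{-1}\beta(t)^{-1}$, which is a rational map $\mathbb{A}^1 \dashrightarrow {\bf Str}(A)$ defined at $0$ and $1$, with $c(0) = 1$ and $c(1) = [g,h]$. The crux of the argument is that $c$ actually factors through ${\bf G}$: the multiplier $\nu \colon {\bf Str}(A) \to {\bf G}_m$ is a homomorphism of algebraic groups with kernel ${\bf G}$, and since ${\bf G}_m$ is abelian we have $\nu(c(t)) = [\nu(\alpha(t)),\nu(\beta(t))] = 1$, so $c$ takes values in ${\bf G}$. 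This exhibits a rational path in ${\bf G}$ from $1$ to $[g,h]$, whence $[g,h] \in R{\bf G}(K)$. As $g,h$ were arbitrary, the commutator subgroup of ${\bf G}(K)$ lies in $R{\bf G}(K)$ and ${\bf G}(K)/R$ is abelian.

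There is no serious obstacle beyond Theorem~\ref{main} itself; the argument is a formal consequence of the $R$-triviality of ${\bf Str}(A)$ combined with the abelianness of ${\bf Str}(A)/{\bf G} \simeq {\bf G}_m$. The only external input is the standard classification statement realizing every simply connected strongly inner form of type $\mathrm{E}_6$ as the derived subgroup of the structure group of some Albert algebra.
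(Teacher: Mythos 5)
Your proof is correct and takes essentially the same route as the paper, which merely asserts that $[{\rm Str}(A),{\rm Str}(A)]\subset R{\bf G}(K)$ "follows" from the $R$-triviality of ${\bf Str}(A)$; you supply the detail it elides, namely the commutator-of-paths construction and the observation that the multiplier map $\nu:{\bf Str}(A)\to{\bf G}_m$ has kernel ${\bf G}$, which forces the commutator path into ${\bf G}$.
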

\begin{proof} 
The group ${\bf G}$ is the derived subgroup of the $R$-trivial group ${\bf Str}(A)$ for an Albert algebra $A$.
It follows that $[{\rm Str}(A),{\rm Str}(A)]\subset R{\bf G}(K)$, implying that
${\bf G}(K)/R$ is abelian.
\end{proof}  

\begin{theorem}
Let ${\bf G}$ be a simple simply connected strongly inner form of type $\mathrm{E}_6$
over a field $K$ of arbitrary characteristic. Then the functor $\mathcal{G}/R$
has transfers.
\end{theorem}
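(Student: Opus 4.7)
The plan is to construct functorial transfer maps ${\rm tr}^F_K : {\bf G}(F)/R \to {\bf G}(K)/R$ using Gille's norm principle (Theorem \ref{normprinciple}) together with the $R$-triviality of ${\bf Str}(A)$ (Theorem \ref{main}). The key input is the central isogeny
\[
 1 \longrightarrow {\bf Z} \longrightarrow {\bf G}_m \times {\bf G} \longrightarrow {\bf Str}(A) \longrightarrow 1,
\]
where ${\bf Z} \simeq \boldsymbol{\mu}_3$ is the centre of the strongly inner form ${\bf G}$, together with the analogous sequence $1 \to {\bf Z} \to {\bf G} \to \overline{\bf G} \to 1$ whose adjoint quotient is $R$-trivial by Theorem \ref{adjointcase}.

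Given $g \in {\bf G}(F)$ for a finite separable extension $F/K$, view $g$ inside ${\bf Str}(A)(F)$. By Theorem \ref{main}, there is a rational path $\psi : \mathbb{A}^1_F \dashrightarrow {\bf Str}(A)$ defined at $0, 1$ with $\psi(0)=1$ and $\psi(1) = g$. Writing $\psi$ as the image of a section over $F(t)^{\rm sep}$ of the first isogeny above produces an obstruction cocycle $[\xi(t)] \in H^1(F(t), {\bf Z})$ lying in
\[
U(F(t)) := {\rm Ker}[H^1(F(t),{\bf Z}) \to H^1(F(t),{\bf G})],
\]
whose specializations at $t=0$ and $t=1$ vanish since $(1, 1)$ lifts $\psi(0) = 1$ and $(1, g) \in K^\times \times {\bf G}(F)$ lifts $\psi(1) = g$. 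This yields an $R$-trivial element in the sense of the definition preceding Example \ref{typeA_n}. By Theorem \ref{normprinciple}, the corestriction $[\eta(t)] := {\rm cor}^{F(t)}_{K(t)}([\xi(t)])$ lies in $U(K(t))$ and is again $R$-trivial. Realize $[\eta(t)]$ as the obstruction of some path $\psi' : \mathbb{A}^1_K \dashrightarrow {\bf Str}(A)$ with $\psi'(0) = 1$; the specialization $\psi'(1) \in {\bf Str}(A)(K)$ then lifts to some $(a, h) \in K^\times \times {\bf G}(K)$. Set ${\rm tr}^F_K(g) := h$ modulo $R$-equivalence.

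Functoriality in towers $L/F/K$ follows from the transitivity ${\rm cor}^L_K = {\rm cor}^F_K \circ {\rm cor}^L_F$ in Galois cohomology. The main technical obstacle is well-definedness of ${\rm tr}^F_K$. The ambiguity in the final lift $(a, h)$ is governed by ${\bf Z}(K) = \boldsymbol{\mu}_3(K)$, and one must check that its image in ${\bf G}(K)/R$ is trivial; this is expected to follow from the reduction of Section \ref{reductionF_4}, by realizing central cube roots of unity as $R$-trivial products inside subgroups of type $\mathrm{F}_4$. Independence from the choices of path $\psi$ and its downward realization $\psi'$ reduces to a loop argument: two choices of $\psi$ differ by a loop at $1 \in {\bf Str}(A)$, whose corestricted obstruction belongs to the image of $({\bf G}_m \times {\bf G})(K(t))$ and thus produces $R$-equivalent endpoints. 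Finally, invariance under $R$-equivalence of $g$ itself is established by concatenating an $F$-rational homotopy from $g$ to an $R$-equivalent $g'$ with $\psi$, and invoking Gille's theorem once more to track the change in the corestricted cocycle.
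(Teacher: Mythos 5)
The paper's proof of this theorem is a single line: it applies Theorem 3.4 of \cite{CM} to the Weil restriction ${\bf G}' = R_{F/K}({\bf G})$, which yields the existence of transfers as an immediate consequence of the $R$-triviality results already established. Your proposal takes a genuinely different route, attempting to build the transfer maps explicitly from the isogeny $1 \to {\bf Z} \to {\bf G}_m \times {\bf G} \to {\bf Str}(A) \to 1$ together with Gille's norm principle.

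The difficulty is that this explicit construction, as written, does not close up. You acknowledge the well-definedness issue but propose resolving only the smallest part of it: the ambiguity of the final lift $(a,h)$, governed by ${\bf Z}(K)=\boldsymbol{\mu}_3(K)$. Even there, the claim $\boldsymbol{\mu}_3(K)\subset R{\bf G}(K)$ does not follow from Section \ref{reductionF_4}, which concerns $R$-triviality in ${\bf Str}(A)$ rather than in ${\bf G}$: a path in ${\bf Str}(A)$ from $1$ to a central cube root of unity passes through ${\bf G}_m$ and does not remain in ${\bf G}$. Worse, there is a much larger unaddressed ambiguity: the cocycle $[\eta(t)]=\mathrm{cor}^{F(t)}_{K(t)}([\xi(t)])$ determines the lift $\psi'$ only modulo the full image of $({\bf G}_m\times{\bf G})(K(t))$ in ${\bf Str}(A)(K(t))$, so $\psi'(1)$ is far from canonical. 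Nor is it shown that the proposed recipe is a group homomorphism on $R$-equivalence classes, which is what "transfer" requires, and the transitivity of corestriction alone does not give functoriality in towers once these ambiguities are present. In effect, your construction would require reproving the content of \cite[Theorem 3.4]{CM} from scratch, and the hard part of that proof is precisely the careful control of all these choices. Citing the existing result, as the paper does, is the shorter and safer path.
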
 
\begin{proof} Apply Theorem $3.4$ in \cite{CM} to ${\bf G}'=R_{F/K}({\bf G})$.
\end{proof}

\begin{theorem}
Let ${\bf G}$ be a simple simply connected strongly inner form of type $\mathrm{E}_6$
over a field $K$ of arbitrary characteristic, and let ${\bf Z}\subset {\bf G}$ be its centre.
Then the norm principle holds for $({\bf Z},{\bf G})$.
\end{theorem}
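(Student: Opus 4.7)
The plan is to reduce to Gille's norm principle (Theorem~\ref{normprinciple}) by showing that every class in $\mathrm{Ker}[H^1(L,{\bf Z})\to H^1(L,{\bf G})]$ is $R$-trivial in the sense defined in Section~\ref{preliminaries}, and then invoke Theorem~\ref{adjointcase} to conclude. So the entire heavy lifting has already been done; what remains is a short deduction.

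First I would use the central isogeny
\[
1 \longrightarrow {\bf Z} \longrightarrow {\bf G} \longrightarrow \overline{\bf G} \longrightarrow 1
\]
and the resulting connecting map $\delta_F : \overline{\bf G}(F) \to H^1(F,{\bf Z})$ for any field extension $F/K$. A standard cohomological argument, exact in the middle, identifies the image of $\delta_F$ with $\mathrm{Ker}[H^1(F,{\bf Z}) \to H^1(F,{\bf G})]$. Hence, given any finite extension $L/K$ and any $[\eta]\in \mathrm{Ker}[H^1(L,{\bf Z})\to H^1(L,{\bf G})]$, I may lift $[\eta]$ to an element $\bar g \in \overline{\bf G}(L)$ with $\delta_L(\bar g) = [\eta]$.

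Next I would invoke Theorem~\ref{adjointcase}, which asserts that $\overline{\bf G}$ is $R$-trivial over every extension of $K$. In particular $\overline{\bf G}(L)/R = 1$, so there exists a rational map $f: \mathbb{A}^1_L \dashrightarrow \overline{\bf G}_L$ defined at $0$ and $1$ with $f(0) = 1$ and $f(1) = \bar g$. Applying the connecting map on the generic point $L(t)$ produces a cocycle $\eta(t) \in Z^1(L(t),{\bf Z})$, defined at $0$ and $1$, with $\eta(0) = 1$ and $\eta(1) = \eta$. By construction $\eta(t)$ lifts to $f(t) \in \overline{\bf G}(L(t))$, so it belongs to $\mathrm{Ker}[H^1(L(t),{\bf Z}) \to H^1(L(t),{\bf G})]$. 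This verifies the definition of $R$-triviality of $[\eta]$ given before Example~\ref{typeA_n}.

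Having established that every element of the relevant kernel is $R$-trivial, Theorem~\ref{normprinciple} applies directly and yields $\mathrm{cor}^L_K([\eta]) \in \mathrm{Ker}[H^1(K,{\bf Z}) \to H^1(K,{\bf G})]$, and even that this corestriction class is itself $R$-trivial. Since $L/K$ and $[\eta]$ were arbitrary, the norm principle for the pair $({\bf Z},{\bf G})$ follows. There is no real obstacle: the only nontrivial input is the $R$-triviality of $\overline{\bf G}$, already proved as Theorem~\ref{adjointcase}, and the rest is a routine translation between $R$-equivalence in the adjoint group and $R$-triviality of cohomology classes via the connecting map.
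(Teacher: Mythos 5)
Your proposal is correct and follows the same route as the paper, which simply states that $R$-triviality of $\overline{\bf G}$ plus Gille's norm principle gives the result; you have merely spelled out the standard deduction, via the connecting map $\delta_F : \overline{\bf G}(F) \to H^1(F,{\bf Z})$, that $R$-triviality of the adjoint group translates into $R$-triviality of every class in $\mathrm{Ker}[H^1(L,{\bf Z})\to H^1(L,{\bf G})]$. The elaboration is sound and no gaps are present.
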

\begin{proof} Since the corresponding adjoint group
$\overline{\bf G}$ is $R$-trivial, the result follows from Theorem~\ref{normprinciple}.
\end{proof}

\end{bibunit}

\newpage

\begin{bibunit}

\appendix
\section{\\ By Richard M.\ Weiss\\ \medskip {\rm \small Department of Mathematics\\
         Tufts University \\
         Medford, MA 02155, USA\\ rweiss@tufts.edu}}
In this appendix we examine the connection between groups $G(k)$ with index
$$
\begin{array}{ll}
\begin{picture}(125,25)
\put(00,02){\line(1,0){120}}
\put(80,02){\line(0,1){20}}
\put(00,02){\circle*{3}}
\put(20,02){\circle*{3}}
\put(40,02){\circle*{3}}
\put(60,02){\circle*{3}}
\put(80,02){\circle*{3}}
\put(100,02){\circle*{3}}
\put(120,02){\circle*{3}}
\put(80,22){\circle*{3}}
\put(00,02){\circle{10}}
\put(20,02){\circle{10}}
\end{picture}.
\end{array}
$$
(that is to say, $E_{8,2}^{78}$) and anisotropic exceptional cubic norm structures.
Our main goal is show the equivalence of Theorems~\ref{hex4} and~\ref{hex2}
below.

\begin{notation}\label{hex21}
Let $k$ be a field, 
let $G$ be a semi-simple
simply connected algebraic group of absolute type $E_8$ defined over $k$ such that 
the index of $G(k)$ is ${\mathscr I}:=E_{8,2}^{78}$,
let $S$ be a maximal $k$-split torus of $G$, let $T$ be
a maximal torus containing $S$ and defined over $k$, and 
let $\Phi$ be the root system of $G$ with respect to $T$.
The nodes of ${\mathscr I}$ form a root basis
of $\Phi$. Let $\tilde\alpha$ denote the highest
root with respect to this basis.
\end{notation}

\begin{notation}\label{hex3}
Let $\Phi_k$ denote the relative root system of $G$ with respect to $S$;
it is a root system of type~$G_2$. For each
$\alpha\in\,\Phi_k$, let $U_{(\alpha)}$ denote the unipotent $k$-subgroup defined in \cite[5.2]{BT}.
We call the groups $U_{(\alpha)}$ for $\alpha\in\,\Phi_k$ the {\it relative root groups} of $G$.
\end{notation}

Here now are the two assertions whose equivalence we want to demonstrate:

\begin{theorem}\label{hex4}
$G(k)=\langle U_{(\alpha)}(k)\mid\alpha\in\,\Phi_k\rangle$ for all $G(k)$ as in {\rm\ref{hex21}}.
\end{theorem}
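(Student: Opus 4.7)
The plan is to reduce the statement, via the standard Kneser--Tits formalism, to the $R$-triviality of the adjoint anisotropic kernel of $G$, which has already been established in the body of the paper as Theorem~\ref{adjointcase}.

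First I would identify the right-hand side
\[ G(k)^+ := \langle U_{(\alpha)}(k) \mid \alpha \in \Phi_k \rangle \]
with the normal subgroup of $G(k)$ generated by the $k$-points of the unipotent radicals of the minimal parabolic $k$-subgroups of $G$. This is a routine consequence of Borel--Tits structure theory: for $\alpha \in \Phi_k$ a simple relative root, $U_{(\alpha)}$ lies in the unipotent radical of a minimal parabolic $k$-subgroup, and since $\Phi_k$ is of type $\mathrm{G}_2$ and generated by its simple roots, conjugation by representatives of the relative Weyl group produces the remaining $U_{(\alpha)}$'s. Thus the assertion $G(k) = G(k)^+$ is literally the Kneser--Tits conjecture for $G$.

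Next, by Gille's theorem \cite[Th\'eor\`eme 7.2]{gille}, for our isotropic absolutely simple simply connected group $G$ the Kneser--Tits conjecture is equivalent to $G(k)/R = 1$, so it suffices to prove the latter. I would then invoke the short Bruhat decomposition argument already used in the body of the paper for the proof of the Kneser--Tits theorem for $\mathrm{E}_{7,1}^{78}$ and $\mathrm{E}_{8,2}^{78}$: the open big cell shows that $G(k)/R \simeq C_S(G)(k)/R$. Because the index of $G(k)$ is $\mathrm{E}_{8,2}^{78}$, the semisimple anisotropic kernel $H$ of $G$ is a strongly inner form of type $\mathrm{E}_6$, and an easy inspection of the index (done over an algebraic closure) shows that $S \cap H$ is the centre of $H$. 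Consequently $C_S(G)(k)/R \simeq \overline{H}(k)/R$, where $\overline{H} = H/Z(H)$ is the adjoint group of $H$.

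Finally, $\overline{H}$ is an adjoint strongly inner form of type $\mathrm{E}_6$, hence is of the form $\overline{\bf G}$ attached in the main text to some division Albert algebra. Theorem~\ref{adjointcase} then gives $\overline{H}(k)/R = 1$, whence $G(k)/R = 1$ and $G(k) = G(k)^+$. The only substantive ingredient is Theorem~\ref{adjointcase}; the remainder is bookkeeping. If there is a potential obstacle, it lies in the intermediate step $C_S(G)(k)/R \simeq \overline{H}(k)/R$, which requires one to verify that the split torus $S$ contributes trivially to $R$-equivalence and that the intersection $S \cap H$ is indeed the centre of the anisotropic kernel --- both are standard but need a short case-by-case verification using the Tits index $\mathrm{E}_{8,2}^{78}$.
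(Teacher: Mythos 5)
Your proposal is correct and follows essentially the same route as the paper's proof of the Kneser--Tits theorem for $\mathrm{E}_{8,2}^{78}$ in the Applications section: Gille's equivalence between Kneser--Tits and $R$-triviality of $G(k)/R$, the Bruhat decomposition to reduce to the adjoint anisotropic kernel $\overline{H}$, and Theorem~\ref{adjointcase}. The only step you gloss over that the paper makes explicit is the preliminary reduction to characteristic zero via \cite[Remarque~7.4]{gille}, which is needed because the main $R$-triviality theorem for $\mathrm{Str}(A)$ is first established under the standing convention $\operatorname{char}K\neq2,3$.
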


\begin{theorem}\label{hex2}
Let $\Xi=(J,k,N,\#,T,\times,1)$ be an exceptional 
cubic norm structure and suppose that $\Xi$ is anisotropic.
Then the structure group ${\rm Str}(\Xi)$ of $\Xi$ is generated by the set 
\begin{equation}\label{hex2a}
\{U_a\mid a\in J^*\}\cup\{b\mapsto tb\mid t\in k^*\},
\end{equation}
where $U_a$ is as in {\rm\cite[(15.42)]{TW}}.
\end{theorem}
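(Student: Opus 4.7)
The plan is to prove Theorem \ref{hex2} by establishing its equivalence with Theorem \ref{hex4}; the bridge between the two statements is the theory of Moufang hexagons as developed in \cite{TW}.

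First I would associate to a group $G$ of index $\mathrm{E}_{8,2}^{78}$ its relative spherical building $\Delta$. Because $G$ is isotropic and the relative root system $\Phi_k$ is of type $\mathrm{G}_2$, the building $\Delta$ is a Moufang generalized hexagon. The Dynkin type $\mathrm{E}_8$ together with the fact that the anisotropic kernel is of (strongly inner) type $\mathrm{E}_6$ forces $\Delta$ to be the hexagon associated to an exceptional cubic norm structure $\Xi=(J,k,N,\#,T,\times,1)$; the two $G(k)$-orbits of root groups of $\Delta$ are then parametrized respectively by $(J,+)$ and $(k,+)$, and the assumption that $\Xi$ be anisotropic corresponds to the two circled nodes in $\mathscr{I}$ being the only $k$-split ones.

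Next I would identify $G(k)^+:=\langle U_{(\alpha)}(k)\mid\alpha\in\Phi_k\rangle$ with the subgroup of ${\rm Aut}(\Delta)$ generated by the root groups of $\Delta$, i.e.\ with the little projective group of $\Delta$. The Hua subgroup $H$ of $\Delta$ (the stabilizer of a chamber acting faithfully on the neighbouring root groups) is then naturally isomorphic to ${\rm Str}(\Xi)$; moreover, \cite[(15.42)]{TW} shows that each $U_a$, with $a\in J^*$, and each scalar homothety $b\mapsto tb$, with $t\in k^*$, is realized as a Hua element arising as a product of two elements in opposite root groups of $\Delta$. It follows that the subgroup of ${\rm Str}(\Xi)$ generated by the set (\ref{hex2a}) is exactly $H\cap G(k)^+$.

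Finally, I would invoke the Bruhat decomposition of $G(k)$ with respect to a minimal parabolic to reduce the question $G(k)=G(k)^+$ to the analogous question inside the chamber stabilizer; in this reduction the quotient $G(k)/G(k)^+$ becomes isomorphic to the quotient of ${\rm Str}(\Xi)$ by the subgroup generated by (\ref{hex2a}), yielding the equivalence of Theorems \ref{hex4} and \ref{hex2}. The main obstacle will be making the Hua identification fully explicit, i.e.\ writing $U_a$ (and each homothety) as a concrete product of two elements in opposite root groups of $\Delta$; this requires the detailed hexagon commutator formulas of \cite[Chapter 15]{TW} together with careful bookkeeping of the $\mathrm{G}_2$-type relations among the relative roots in $\Phi_k$.
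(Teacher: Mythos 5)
Your proposal reproduces, in outline, the argument of the Appendix (culminating in Theorem~\ref{hex16}), which shows that the two assertions Theorem~\ref{hex4} ($G(k)=G(k)^+$ for $\mathrm{E}_{8,2}^{78}$) and Theorem~\ref{hex2} (Tits--Weiss for ${\rm Str}(\Xi)$) are \emph{equivalent}. The dictionary you describe --- passing to the relative building of $G$, recognizing it as the Moufang hexagon ${\mathscr H}(\Xi)$ of an anisotropic exceptional cubic norm structure, identifying $G(k)^+$ with the little projective group $G_0^\dagger$, and reducing the comparison of $G(k)$ with $G_0^\dagger$ to a comparison of apartment-pointwise stabilizers which, via the $\mu$-maps and the commutator formulas of~\cite{TW}, becomes a comparison between ${\rm Str}(\Xi)$ and the subgroup generated by~\eqref{hex2a} --- is a fair sketch of Weiss's chain of Propositions~\ref{hex8}, \ref{hex11}, \ref{hex15}, \ref{hex13}, \ref{hex20}.

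There is, however, a genuine gap: establishing the equivalence of two statements does not prove either one. Theorem~\ref{hex4} is the Kneser--Tits conjecture for $\mathrm{E}_{8,2}^{78}$, which was not known prior to this work; your plan ``prove Theorem~\ref{hex2} by establishing its equivalence with Theorem~\ref{hex4}'' only succeeds if Theorem~\ref{hex4} is supplied independently. In the paper, that is precisely the hard part: the main body reduces Kneser--Tits for $\mathrm{E}_{7,1}^{78}$ and $\mathrm{E}_{8,2}^{78}$, via Gille's results on Whitehead groups, to the $R$-triviality of ${\bf Str}(A)$, and then proves $R$-triviality in Theorem~\ref{main} by a long analysis of the stabilizers of cubic subfields and $9$-dimensional subalgebras, the weak Skolem--Noether theorem for isomorphic embeddings, and the reduction to $\mathrm{F}_4$. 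None of this appears in your proposal, so as written it only proves the conditional statement ``Theorem~\ref{hex4} $\Longleftrightarrow$ Theorem~\ref{hex2},'' not Theorem~\ref{hex2} itself. You need to either cite the Kneser--Tits result as input or reproduce the $R$-triviality argument.

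A secondary, more minor, point: you describe the Hua subgroup of the hexagon as being isomorphic to ${\rm Str}(\Xi)$, but in the Appendix it is the centralizer $C_J(U_6)$ inside the \emph{full} pointwise apartment stabilizer $J=G(k)\cap H_0$ that $\varphi$ carries onto ${\rm Str}(\Xi)$ (Proposition~\ref{hex15}), while the analogous centralizer inside the Hua subgroup, $C_{H_0^\dagger}(U_6)$, is sent to the subgroup generated by~\eqref{hex2a} (Proposition~\ref{hex13}). The content of the equivalence is exactly the comparison of these two centralizers, via Propositions~\ref{hex8} and~\ref{hex11}; identifying ${\rm Str}(\Xi)$ with ``the Hua subgroup'' blurs the two sides of the comparison the argument is trying to make.
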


\begin{remark}\label{hex34}
All anisotropic cubic norm structures arise from
one of two constructions of Tits. A proof of this result, due to Racine and Petersson, can be found in
\cite[Chapters~15 and~30 and (17.6)]{TW}.
\end{remark}

Let $k$, $G$, ${\mathscr I}$, $S$, $\Phi$ and $\tilde\alpha$ be as in \ref{hex21}.
We begin our demonstration.

\begin{proposition}\label{hex1}
The  following hold:
\begin{enumerate}[\rm(i)]
\item The root group $U_{\tilde\alpha}$ of $G$ is defined over $k$.
\item The quotient $N_{G(k)}(U_{\tilde\alpha}(k))/C_{G(k)}(U_{\tilde\alpha}(k))$ acts freely
on the set of non-trivial elements of $U_{\tilde\alpha}(k)$.
\end{enumerate}
\end{proposition}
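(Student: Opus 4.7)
\smallskip

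\noindent
\textbf{Plan.} For (i), the key observation is that $\tilde\alpha$ is characterized uniquely as the maximal element of $\Phi^+$ under the standard partial order $\beta \leq \gamma \Leftrightarrow \gamma - \beta \in \mathbb{Z}_{\geq 0}\Pi$, where $\Pi$ is the simple system. The Tits index comes equipped with a positive system $\Phi^+$ that is stable under $\Gamma = \Gal(k^{\mathrm{sep}}/k)$, arising from a minimal $k$-parabolic containing $T$, so $\Gamma$ sends $\Pi$ into $\Phi^+ = \mathbb{Z}_{\geq 0}\Pi \cap \Phi$, preserves the partial order, and therefore fixes its unique maximum $\tilde\alpha$. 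Hence $U_{\tilde\alpha}$ is $\Gamma$-stable and defined over $k$.

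For (ii), I would first note that $U_{\tilde\alpha}$ is a smooth connected one-dimensional unipotent $k$-group, hence $k$-isomorphic to $\mathbb{G}_{a,k}$. The conjugation action of $N_G(U_{\tilde\alpha})$ on $U_{\tilde\alpha}$ factors through the smooth connected automorphism group of $\mathbb{G}_{a,k}$, which is $\mathbb{G}_m$ acting by scalar multiplication; explicitly, $T$ acts via the nonzero character $\tilde\alpha$, and lifts of Weyl elements stabilizing $\tilde\alpha$ act by $\pm 1 \in \mathbb{G}_m$. The kernel of the resulting map $N_G(U_{\tilde\alpha}) \to \mathbb{G}_m$ is precisely $C_G(U_{\tilde\alpha})$.

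To bridge the algebraic groups and the set-theoretic normalizers and centralizers inside $G(k)$, I would invoke Zariski density: since $k$ is infinite and $U_{\tilde\alpha}$ is smooth and connected, $U_{\tilde\alpha}(k)$ is Zariski dense in $U_{\tilde\alpha}$, so conjugation by any $g \in G(k)$ preserves $U_{\tilde\alpha}(k)$ if and only if it preserves $U_{\tilde\alpha}$; hence $N_{G(k)}(U_{\tilde\alpha}(k)) = N_G(U_{\tilde\alpha})(k)$, and similarly for centralizers. The quotient $N_{G(k)}(U_{\tilde\alpha}(k))/C_{G(k)}(U_{\tilde\alpha}(k))$ therefore embeds into $k^{\times}$, acting on $U_{\tilde\alpha}(k) \simeq k$ by scalar multiplication, which is free on $k \setminus \{0\}$. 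The main subtle step is this last passage between algebraic and set-theoretic normalizers via density; beyond that, both parts reduce to standard facts about the Tits index and the structure of absolute root subgroups, with no calculation specific to the index $E_{8,2}^{78}$ required.
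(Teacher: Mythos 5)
Your argument for part (i) has a gap: you assert that $\Gamma=\Gal(k^{\mathrm{sep}}/k)$ preserves the positive system $\Phi^+$, but if that were so the Borel subgroup corresponding to $\Phi^+$ would be $\Gamma$-stable, hence defined over $k$, and $G$ would be quasi-split, which it is not since its anisotropic kernel $H$ has type $E_6$. A minimal $k$-parabolic $P\supset T$ only gives $\Gamma$-stability of the set of roots of $\operatorname{Lie}(P)$, namely $\Phi^+\cup\Phi_0$ with $\Phi_0$ the root system of $H$; the Galois action on $\Phi$ is the $*$-action (trivial for $E_8$) twisted by a cocycle valued in $W_0=W(\Phi_0)$, and $W_0$ certainly does not preserve $\Phi^+$. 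The idea the paper's proof actually invokes, and which your proposal never uses, is that $\tilde\alpha$ is \emph{orthogonal} to the nodes in the anisotropic part of the index. Since $W_0$ is generated by reflections in those nodes, orthogonality forces $W_0$ to fix $\tilde\alpha$, so the Galois action fixes $\tilde\alpha$ and $U_{\tilde\alpha}$ is defined over $k$. This orthogonality is a genuinely index-specific fact: it holds here because in the extended Dynkin diagram of $E_8$ the added node is attached to one of the two distinguished (circled) nodes. So your closing remark that no calculation specific to $E_{8,2}^{78}$ is required is not accurate.

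Part (ii) is essentially fine and usefully makes explicit what the paper leaves implicit, in particular the identification of the set-theoretic normalizer $N_{G(k)}(U_{\tilde\alpha}(k))$ with $N_G(U_{\tilde\alpha})(k)$ via Zariski density of $U_{\tilde\alpha}(k)$ in $U_{\tilde\alpha}$, and the resulting embedding of the normalizer-modulo-centralizer into $k^{\times}$ acting by scalars. Two caveats: this part relies on (i), so the gap above has to be closed first; and in positive characteristic $\operatorname{Aut}(\mathbb{G}_a)$ is strictly larger than $\mathbb{G}_m$, so one should justify that $N_G(U_{\tilde\alpha})$ acts through $\mathbb{G}_m$, e.g.\ by noting that its action on the one-dimensional space $\operatorname{Lie}U_{\tilde\alpha}$ is linear, hence scalar, given that the appendix is intended to be characteristic-free.
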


\begin{proof}
The two claims follow from the observation that the root $\tilde\alpha$ is orthogonal
to the subspace spanned by the nodes in the anisotropic part of ${\mathscr I}$.
\end{proof}

\begin{remark}\label{hex33}
Let $\Phi_k$ and $U_{(\alpha)}$ for $\alpha\in\Phi_k$ be as \ref{hex3}.
By \ref{hex1}(i), $U_{\tilde\alpha}$ is a relative root group of $G$ for some
long root of $\Phi_k$. Thus, in particular, $\dim_kU_{(\alpha)}=1$ 
for all long roots $\alpha$ of $\Phi_k$. 
\end{remark}

\begin{remark}\label{hex50}
We have
$$\sum_{\alpha\in\Phi_k}\dim_kU_{(\alpha)}=n_8-n_6,$$
where $n_\ell$ denotes the cardinality of a root system of type $E_\ell$ for $\ell=6$ and 8. 
It follows that $\dim_kU_{(\alpha)}=27$ for all the short roots $\alpha$ of $\Phi_k$.
\end{remark}

Now let ${\rm BldSc}(G(k))$ be the spherical building attached to $G(k)$ 
as described in \cite[42.3.6]{TW} and let $X$ denote the
Moufang hexagon associated with ${\rm BldSc}(G(k))$. Thus $X$ is the bipartite graph whose vertices
are the non-minimal parabolic subgroups of $G$ defined over $k$, 
where two of these parabolic subgroups are
adjacent whenever their intersection is also a parabolic subgroup defined over $k$.
Since the cocenter of $E_8$ is trivial, the center of $G$ is trivial. Thus 
$G(k)$ acts faithfully on ${\rm BldSc}(G(k))$ and hence
on $X$. From now on, we identify
$G(k)$ with its image in ${\rm Aut}(X)$.
For each $\alpha\in\Phi_k$, let $U_\alpha$ denote the subgroup $U_{(\alpha)}(k)$ of $G(k)$.

Let $\alpha_0,\alpha_1,\ldots,\alpha_{11}$ be a labeling of the twelve roots in $\Phi_k$ 
with subscripts in ${\mathbb Z}_{12}$ such that the angle between $\alpha_{i-1}$ and $\alpha_i$
is $\pi/6$ for each $i$ and $\alpha_i$ is long if and only if $i$ is even.
An apartment of $X$ is a circuit of length~$12$. 

\begin{proposition}\label{hex40}
There is a unique apartment $\Sigma$ of $X$
for which there is a labeling $x_0,x_1,\ldots,x_{11}$ of the vertices of $\Sigma$ with subscripts
in ${\mathbb Z}_{12}$ 
such that for each $i$, $x_{i-1}$ is adjacent to $x_i$ and 
$U_{\alpha_i}$ is the root group of $X$ 
corresponding to the root $(x_i,x_{i+1},\ldots,x_{i+6})$ of $\Sigma$ as defined in 
{\rm\cite[(4.1)]{TW}}.
\end{proposition}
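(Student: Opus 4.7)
The plan is to construct $\Sigma$ as the apartment of $X$ canonically associated to the maximal $k$-split torus $S$, use the relative Weyl group to produce the cyclic labeling from the chosen ordering of $\Phi_k$, and identify the building root groups with the $U_{\alpha_i}$ by invoking the relationship between Bruhat--Tits theory and the Moufang structure on $X$.

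First I would let $\Sigma$ be the subcomplex of $X$ whose vertices are the proper non-minimal parabolic $k$-subgroups that are normalized by $S$. Since $\Phi_k$ is of type $G_2$ and $X$ is the relative spherical building of $G$, standard Bruhat--Tits theory (e.g.\ \cite{BT}) shows that $\Sigma$ is an apartment of $X$, i.e.\ a circuit of length $12$ in the bipartite graph, whose vertices alternate between the two $S$-conjugacy classes of non-minimal parabolics. A choice of positive system in $\Phi_k$ singles out a minimal $k$-parabolic $P_0\supset S$, and the two maximal $k$-parabolics containing $P_0$ are adjacent in $\Sigma$; take these as $x_0$ and $x_1$, choosing the assignment so that $x_0$ corresponds to the node of the relative Dynkin diagram associated with the long relative simple root. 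Walking around $\Sigma$ by repeated application of the simple reflections of the relative Weyl group $W(G_2)$ then produces the remaining vertices $x_2,\ldots,x_{11}$, consistently labeled by ${\mathbb Z}_{12}$.

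Next I would verify the root group identification. By the cited labeling convention, $\alpha_i$ is long for $i$ even and short for $i$ odd, with $\alpha_{i+6}=-\alpha_i$. For each $i$, the relative root $\alpha_i$ cuts the apartment $\Sigma$ into two half-apartments, and the one consisting of $(x_i,x_{i+1},\ldots,x_{i+6})$ is the root of the building corresponding to $\alpha_i$; this is the standard identification between relative roots and building roots in an apartment attached to a maximal split torus. The Moufang structure on $X$ defined in \cite[(4.1)]{TW} assigns to each such building root a root group fixing that half-apartment pointwise and acting simply transitively on the apartments containing it. By the construction of the Moufang structure on buildings of isotropic algebraic groups (\cite[Chapter 42]{TW}), this root group is exactly $U_{(\alpha_i)}(k)=U_{\alpha_i}$. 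The dimension check in \ref{hex33} and \ref{hex50} is consistent with this assignment: a $1$-dimensional $U_{(\alpha_i)}$ matches a long root of the hexagon and a $27$-dimensional one matches a short root.

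Finally I would argue uniqueness. Suppose $\Sigma'$ is any apartment of $X$ admitting a labeling $(x_i')$ with the stated property. Then the building root $(x_0',x_1',\ldots,x_6')$ has root group $U_{\alpha_0}$, and likewise for $(x_1',\ldots,x_7')$ with root group $U_{\alpha_1}$. In a Moufang polygon, the root group of a root acts simply transitively on the apartments containing that root, and hence a non-trivial root group determines its root uniquely; this applies to both $U_{\alpha_0}$ and $U_{\alpha_1}$, which are non-trivial by \ref{hex1}(ii) and \ref{hex50} respectively. Thus $(x_i',\ldots,x_{i+6}')=(x_i,\ldots,x_{i+6})$ as ordered building roots for $i=0,1$, and two such consecutive roots already cover $8$ vertices of any apartment containing them, forcing $\Sigma'=\Sigma$ and $x_i'=x_i$ for all $i$. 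The main technical point is the translation between the combinatorial Moufang root groups of $X$ and the algebraic relative root groups $U_{(\alpha)}(k)$ of $G$, which is a standard but non-trivial feature of the Bruhat--Tits formalism; everything else is bookkeeping once this correspondence is accepted.
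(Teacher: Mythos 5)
Your argument is correct and reconstructs, in self-contained form, the content that the paper disposes of with a one-line citation to \cite[Prop.~42.3.6]{TW}. Both routes rest on the same underlying fact — the Bruhat--Tits correspondence between the relative root datum of $(G,S)$ and the Moufang structure on ${\rm BldSc}(G(k))$ — so the approaches are essentially identical in substance; yours simply unpacks what the citation encapsulates, which is useful if one wants the argument to be self-contained. Two small remarks on your uniqueness paragraph: the implication ``acts simply transitively on apartments containing the root, \emph{hence} a non-trivial root group determines its root'' is not quite immediate as stated — what one actually uses is the standard fact that in a thick Moufang polygon distinct roots have distinct root groups (equivalently, the root can be read off from the fixed-point structure of $U_\alpha$), and it would be cleaner to invoke that directly. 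Also, all the $U_{\alpha_i}$ are automatically non-trivial in a thick Moufang building, so the appeal to \ref{hex1}(ii) and \ref{hex50} there, while harmless, is more than is needed. Neither point affects the validity of the argument.
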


\begin{proof}
This holds by \cite[Prop.~42.3.6]{TW}.
\end{proof}

\begin{proposition}\label{hex14}
Let $U_i=U_{\alpha_i}$ for each $i$. Then there
exists an anisotropic cubic norm structure
$$\Xi=(J,F,N,\#,T,\times,1)$$
and isomorphisms $x_i$ from the additive group of $F$ to $U_i$ for
$i=2$, $4$ and $6$ and isomorphisms $x_i$ from the additive group of $J$ to $U_i$
for $i=1$, $3$ and $5$ such that the commutator relations in \cite[(16.8)]{TW} 
hold and $[U_i,U_j]=1$ for all index pairs $(i,j)$ of indices with $1\le i<j\le 6$
that do not appear in this list of relations. 
\end{proposition}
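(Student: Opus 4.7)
The plan is to identify $X$ as a Moufang hexagon, invoke the Tits--Weiss classification of such hexagons to obtain a cubic norm (hexagonal) structure parametrizing the root groups, and then verify from the dimension count and the Tits index of $G$ that this structure is exceptional and anisotropic.

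First, since the relative root system $\Phi_k$ of $G$ is of type $G_2$, the building ${\rm BldSc}(G(k))$ is of type $G_2$, so $X$ is a generalized hexagon. Proposition~\ref{hex40} identifies each $U_i = U_{\alpha_i}(k)$ with the root group of $X$ attached to the root $(x_i, x_{i+1}, \ldots, x_{i+6})$ of $\Sigma$. Using the Bruhat--Tits structure of $G$ and the fact that the $k$-points of the unipotent radicals of the minimal parabolics act transitively on the apartments through a given root, one verifies that $X$ satisfies the Moufang condition with precisely these root groups.

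Second, I would apply the classification of Moufang hexagons (\cite{TW}, Chapter~17), which asserts that every Moufang hexagon is, up to isomorphism, the hexagon attached to a hexagonal system, namely a cubic norm structure $\Xi = (J, F, N, \#, T, \times, 1)$. The classification delivers canonical parametrizations $x_i$ from the additive group of $F$ onto $U_i$ for the long-root indices $i \in \{2,4,6\}$ and from the additive group of $J$ onto $U_i$ for the short-root indices $i \in \{1,3,5\}$, satisfying the commutator relations of \cite[(16.8)]{TW}; the remaining pairs commute by the same classification. Comparing dimensions against Remark~\ref{hex33} and Remark~\ref{hex50} pins down $F = k$ and $\dim_k J = 27$.

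Third, I would verify that $\Xi$ is both exceptional and anisotropic. The dimension equality $\dim_F J = 27$ rules out all non-exceptional hexagonal systems in the Petersson--Racine classification (\cite{TW}, Chapter~15 together with Remark~\ref{hex34}), so $J$ must be an Albert algebra. Anisotropy of $N$ is forced by the Tits index ${\mathscr I} = E_{8,2}^{78}$: the semisimple anisotropic kernel of $G$ is a strongly inner form of type $E_6^{78}$, which is $k$-anisotropic, and a nontrivial zero of $N$ would produce either an idempotent in $J$ or an isotropic vector for the trace form. This would allow one to exhibit a rank-$3$ $k$-split torus inside $G$, contradicting the maximality of $S$.

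The main obstacle is the last step: making rigorous the passage from the hexagonal-system classification back to the algebraic group. This requires matching the ``anisotropic kernel of $X$'' in the sense of \cite{TW} (essentially the structure group of $\Xi$ modulo its center) with the semisimple anisotropic kernel of $G$ in the sense of the Tits index, and tracking how idempotents or isotropic vectors in $\Xi$ correspond to further splittings of $G$. Once this dictionary is in place, the hypothesis on the index of $G$ translates directly into the anisotropy of $N$.
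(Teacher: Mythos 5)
Your overall strategy (apply the Tits--Weiss classification of Moufang hexagons to $X$) is the same as the paper's, but you have substantially overcomplicated one part and overlooked another.

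The anisotropy of $\Xi$ is not a separate thing to prove by appealing to the Tits index; it is built into the classification theorem you invoke. Every thick Moufang hexagon is isomorphic to $\mathscr H(\Xi)$ for some \emph{anisotropic} cubic norm structure $\Xi$, and $X$ is a thick generalized hexagon because it is (the incidence graph of) the rank-2 spherical building ${\rm BldSc}(G(k))$. So once the classification applies, anisotropy of $N$ is automatic. Your proposed detour --- constructing a rank-$3$ split torus from an isotropic vector of $N$ and contradicting maximality of $S$ --- is not needed for Proposition~\ref{hex14}, and indeed you flag it yourself as the shakiest part of your plan. That ``main obstacle'' is simply not an obstacle in the paper's route. (Your arguments about $F\cong k$, $\dim_F J=27$, and exceptionality are also not part of Proposition~\ref{hex14}: those are Propositions~\ref{hex31} and~\ref{hex15}, proved later from the way $S(k)$ sits inside $\langle U_0,U_6\rangle$ and from the Tits-index identification in \cite[42.5.6(d), 42.6]{TW}. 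Importing them here conflates several steps that the appendix deliberately separates.)

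On the other hand, you missed the one genuine thing that does need to be checked before citing \cite[(29.1)--(29.35)]{TW}: Tits--Weiss's derivation there assumes a non-degeneracy condition, namely that the groups $V_i$ are nontrivial for all even $i$ (top of page~303 of \cite{TW}). The paper verifies this by observing that $[U_{(\alpha)},U_{(\beta)}]=1$ for all long roots $\alpha,\beta\in\Phi_k$ at angle $\pi/3$, which follows from the structure of the relative root groups of $G$. Without verifying this hypothesis, the citation to the classification is not yet justified. So the correct proof is shorter than yours in one place (anisotropy is free) and requires a small additional check you did not make (the $V_i\ne1$ hypothesis).
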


\begin{proof}
This holds by \cite[(29.1)--(29.35)]{TW}. (Note that since $[U_{(\alpha)},U_{(\beta)}]=1$ for all 
long roots $\alpha,\beta$ of $\Phi_k$ at an angle of $\pi/3$,
the assumption that $V_i\ne1$ for all even $i$ at the top of \cite[page~303]{TW} is valid.)
\end{proof}

\begin{remark}\label{hex35}
By \cite[(7.5)]{TW}, $X$ is uniquely determined by $\Xi$. We can thus set
$X={\mathscr H}(\Xi)$ as in \cite[16.8]{TW}. By \cite[(35.13)]{TW}, 
${\mathscr H}(\Xi)\cong{\mathscr H}(\Xi')$
for two anisotropic cubic norm structures $\Xi$ and $\Xi'$ if and only if
$\Xi$ and $\Xi'$ are isotopes of each other.
\end{remark}

\begin{notation}\label{hex7}
Let $G_0={\rm Aut}(X)$, let $H_0$ denote the pointwise stabilizer of $\Sigma$ in $G_0$,
let $G_0^\dagger=\langle U_i\mid i\in{\mathbb Z}_{12}\rangle$,
let $H_0^\dagger=G_0^\dagger\cap H_0$ and let $J=G(k)\cap H_0$. 
\end{notation}
\noindent
Since $G_0^\dagger\subset
G(k)$, we have $H_0^\dagger\subset J$. In fact, we have:

\begin{proposition}\label{hex8}
$H_0^\dagger=J$ if and only if $G(k)=G_0^\dagger$.
\end{proposition}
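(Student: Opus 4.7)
The implication $G(k)=G_0^\dagger\Longrightarrow H_0^\dagger=J$ is immediate: intersecting both sides of the former equality with $H_0$ gives $J=G(k)\cap H_0=G_0^\dagger\cap H_0=H_0^\dagger$. The plan is therefore to establish the converse. Fix $g\in G(k)$; the strategy is to produce elements $u,n\in G_0^\dagger$ such that $n^{-1}ug\in H_0$. Once this is accomplished, $n^{-1}ug\in G(k)\cap H_0=J$, and under the hypothesis $J=H_0^\dagger\subset G_0^\dagger$ we conclude $g\in G_0^\dagger$. Combined with the inclusion $G_0^\dagger\subset G(k)$ recorded just before the statement, this yields $G(k)=G_0^\dagger$.

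To produce $u$, I invoke the standard fact that the little projective group of a Moufang polygon acts transitively on pairs of opposite chambers (equivalently, on ordered apartments); see \cite[Chapter 4]{TW}. Fix two opposite chambers $c_1,c_2$ of $\Sigma$. Since $g$ acts on $X$ as a type-preserving automorphism (being an element of the connected algebraic group $G$), the images $gc_1,gc_2$ are again opposite chambers of $X$, so there exists $u\in G_0^\dagger$ with $ugc_i=c_i$ for $i=1,2$. Because in a thick spherical building any two opposite chambers are contained in a unique apartment, $ug$ stabilizes $\Sigma$ setwise. The element $ug$ therefore acts on $\Sigma$ by some element $w$ of the Weyl group $W$ of type $G_2$ (of order $12$). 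By the Moufang condition, every element of $W$ is represented by a Weyl element coming from the root groups $U_\alpha$, and hence lying in $G_0^\dagger$ (cf.\ \cite[(6.1)--(6.2)]{TW}). Picking such an $n\in G_0^\dagger$ inducing $w$ on $\Sigma$, the product $n^{-1}ug$ acts trivially on $\Sigma$, i.e.\ $n^{-1}ug\in H_0$, as required.

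What the construction in fact establishes is the Bruhat-type decomposition $G(k)=G_0^\dagger\cdot J$, from which the proposition follows at once. The argument is essentially building-theoretic bookkeeping; the only non-trivial inputs are the two transitivity properties invoked above — transitivity of $G_0^\dagger$ on pairs of opposite chambers of $X$ and representability of every element of $W$ by an element of $G_0^\dagger$ — both of which are defining features of Moufang polygons and are available in \cite{TW}. No serious obstacle is therefore expected; the one point requiring a modicum of care is the passage from $ug$ stabilizing $\Sigma$ setwise to the selection of a Weyl-element lift $n$, which must be chosen inside $G_0^\dagger$ rather than merely inside $G_0$.
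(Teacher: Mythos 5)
Your proposal is correct and takes essentially the same approach as the paper: reduce an arbitrary $g\in G(k)$ modulo $G_0^\dagger$ to an element fixing $\Sigma$ pointwise, then invoke $H_0^\dagger = J$. The paper accomplishes the reduction in one stroke by citing \cite[(4.12)]{TW} (transitivity of $G_0^\dagger$ on pairs (apartment, edge)); your two-step version is slightly redundant, since once $ug$ fixes two opposite chambers of a thick type-preserving action it already fixes $\Sigma$ pointwise, so $w=1$ and the Weyl-element lift is not actually needed.
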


\begin{proof}
Suppose that $H_0^\dagger=J$ and 
let $g\in G(k)$. By \cite[(4.12)]{TW}, there exists $a\in G_0^\dagger$ such that
$ga$ fixes both $\Sigma$ and the edge $\{x_0,x_1\}$. Hence $ga\in H_0$.
Since $G_0^\dagger\subset G(k)$, it follows that $ga\in J$. Hence $ga\in H_0^\dagger$
and thus $g\in G_0^\dagger$. Therefore $G(k)=G_0^\dagger$.
\end{proof}

\begin{proposition}\label{hex9}
Let $X_i=\langle\mu_i(a)\mu_i(b)\mid a,b\in U_i^*\rangle$ for $i=1$ and $6$, 
where $\mu_i$ is as defined in {\rm\cite[(6.1)]{TW}}. Then $H_0^\dagger=X_1X_6$.
\end{proposition}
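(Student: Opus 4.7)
The plan is to prove the two inclusions separately. For the containment $X_1 X_6 \subseteq H_0^\dagger$, recall that for each $i \in \mathbb{Z}_{12}$ and each $a \in U_i^*$, the element $\mu_i(a)$ defined in \cite[(6.1)]{TW} is a product of elements of $U_i$ and $U_{i+6}$, hence lies in $G_0^\dagger$. Moreover, $\mu_i(a)$ acts on the apartment $\Sigma$ as the reflection exchanging the root $(x_i,\ldots,x_{i+6})$ with its opposite. Consequently, each product $\mu_i(a)\mu_i(b)$ with $a,b \in U_i^*$ fixes $\Sigma$ pointwise, so $X_i \subseteq H_0 \cap G_0^\dagger = H_0^\dagger$ for every $i$. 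In particular, $X_1, X_6 \subseteq H_0^\dagger$, and therefore $X_1 X_6 \subseteq H_0^\dagger$.

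For the reverse inclusion, I would invoke the standard result in Moufang polygon theory (see \cite[Chapter 6]{TW}) that $H_0^\dagger$ is generated by $\bigcup_{i \in \mathbb{Z}_{12}} X_i$. It therefore suffices to show (a) each $X_j$ with $j \neq 1,6$ lies in $X_1 X_6$, and (b) $X_1 X_6$ is closed under multiplication. For (a) I would use the Hua-type identities of \cite[Chapter 6]{TW}, which describe conjugation of $\mu$-elements by other $\mu$-elements, together with the explicit commutator relations \cite[(16.8)]{TW} of the cubic norm structure $\Xi$ supplied by Proposition~\ref{hex14}. Since the indexing of root groups is cyclic modulo $12$ and the $X_j$ for $j$ of fixed parity act on $\Sigma$-adjacent root groups in the same manner, systematic conjugation by generators of $X_6$ transports each $X_j$ with $j$ even into $X_6$, while conjugation by generators of $X_1$ transports each $X_j$ with $j$ odd into $X_1$; the conjugating elements themselves are absorbed into the product $X_1 X_6$.

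For (b), the main point is that $X_6$ normalizes $X_1$. Under the identification of Proposition~\ref{hex14}, generators of $X_6$ act on $U_1 \cong J$ as homotheties $x \mapsto tx$ with $t \in k^*$, while generators $\mu_1(a)\mu_1(b)$ of $X_1$ act on $U_1$ via $U$-operators on $J$. The cubic norm structure identity $U_{ta} = t^2 U_a$ then guarantees that the $X_6$-conjugate of an $X_1$-generator is again a $U$-operator (with rescaled arguments), hence lies in $X_1$. Thus $X_6 X_1 \subseteq X_1 X_6$, so $X_1 X_6$ is a subgroup, and combined with (a) this completes the argument.

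The main obstacle is step (a): making the reduction of the four subgroups $X_2, X_3, X_4, X_5$ into $X_1 X_6$ fully explicit, which requires careful bookkeeping with the commutator and Hua identities in the Moufang hexagon, and a case distinction based on the parity of the index (i.e., whether the root is long or short). The computations are mechanical once one exploits the cubic norm structure formulas, but must be done carefully to ensure that the auxiliary $\mu$-elements used for conjugation cancel correctly modulo $X_1 X_6$.
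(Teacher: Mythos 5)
The paper's proof is a one-line citation to Tits--Weiss \cite[(33.9)]{TW}, so the real question is whether your sketch would actually reproduce that result. Your forward inclusion $X_1X_6\subseteq H_0^\dagger$ is fine: each $\mu_i(a)$ lies in $G_0^\dagger$ and acts on $\Sigma$ as a reflection, so the products $\mu_i(a)\mu_i(b)$ fix $\Sigma$ pointwise and lie in $H_0\cap G_0^\dagger$. Step (b) is also sound in outline: by the results used later in the appendix (compare \ref{hex10} and \ref{hex31}), $X_6$ acts on $U_1\cong J$ by $F$-homotheties, so the quadraticity $U_{ta}=t^2U_a$ does show that $X_6$ normalizes $X_1$, making $X_1X_6$ a subgroup.

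The gap is in step (a). First, you invoke as a ``standard result'' from Chapter~6 of \cite{TW} that $H_0^\dagger=\langle X_i\mid i\in\mathbb{Z}_{12}\rangle$; that chapter does not state such a generation theorem for general Moufang polygons in the form you need, and the structure of $H^\dagger$ is in fact established case by case in the later chapters, of which (33.9) is the hexagonal instance --- so you are implicitly assuming part of what is to be proved. Second, and more concretely, the mechanism you describe for absorbing the other $X_j$ does not do what you say: conjugation by $\mu_6(c)$ maps $U_j$ to $U_{12-j}$ and hence maps $X_j$ to $X_{12-j}$, not into $X_6$; conjugation by an element $\mu_6(a)\mu_6(b)\in X_6$ maps $X_j$ back to itself. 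Thus ``systematic conjugation by generators of $X_6$ transports each even $X_j$ into $X_6$'' is not correct as stated. The argument in \cite{TW} goes in a different direction, roughly: given $h\in H_0^\dagger$, use the explicit action of $H_0^\dagger$ on $U_6$ (scalar multiplication, as recorded in \ref{hex10}) to find $x\in X_6$ with $hx^{-1}\in C_{H_0^\dagger}(U_6)$, and then use the faithfulness of the action on $U_1$ (\ref{hex12}) together with the identification of the $U$-operator action to place $hx^{-1}$ in $X_1$. You would need to replace your reduction step with an argument of that kind.
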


\begin{proof}
This holds by \cite[(33.9)]{TW}.
\end{proof}

\begin{proposition}\label{hex10}
The image of $H_0^\dagger$
in ${\rm Aut}(U_6)$ is $\{x_6(t)\mapsto x_6(st)\mid s\in F^*\}$, where $x_6$ is as in 
{\rm\ref{hex14}}. In particular, the derived group of $H^\dagger_0$
centralizes $U_6$.
\end{proposition}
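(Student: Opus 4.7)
The plan is to use Proposition~\ref{hex9}, which writes $H_0^\dagger = X_1 X_6$, and then compute separately the actions of $X_1$ and of $X_6$ on the root group $U_6$ via the explicit Hua-map formulas for the Moufang hexagon $\mathscr{H}(\Xi)$ recorded in \cite[Chapter~16]{TW}.

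First I would treat the factor $X_6$. Since $U_6 \cong F^+$ is parameterized by the commutative field $F$, the standard Hua-map identity for Moufang hexagons gives
$$\mu_6(a)\, x_6(t)\, \mu_6(a)^{-1} = x_6(-t/a^2)$$
for all $a \in F^*$ and $t \in F$. Hence each generator $\mu_6(a)\mu_6(b)$ of $X_6$ acts on $U_6$ as scalar multiplication by $(b/a)^2$, so the image of $X_6$ in ${\rm Aut}(U_6)$ is exactly the scaling subgroup $\{t \mapsto st : s \in F^{*2}\}$.

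Next I would treat $X_1$. The Hua formula at $U_6$ for the hexagon arising from $\Xi$ reads
$$\mu_1(u)\, x_6(t)\, \mu_1(u)^{-1} = x_6(N(u)\, t)$$
(up to the sign convention of \cite{TW}) for $u \in J^\times$, where $N$ is the cubic norm of $\Xi$. Consequently $\mu_1(u)\mu_1(v)$ acts on $U_6$ as multiplication by $N(v)/N(u)$, and so $X_1$ contributes the subgroup of scalar multiplications by elements of $N(J^\times)\cdot N(J^\times)^{-1} \subset F^*$.

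Combining these two calculations, the image of $H_0^\dagger = X_1 X_6$ in ${\rm Aut}(U_6)$ is contained in the abelian scalar-multiplication subgroup $\{t \mapsto st : s \in F^*\}$. The scalars so realized contain both $F^{*2}$ (from $X_6$) and $F^{*3}$ (from $X_1$, using the fact $N(\lambda \cdot 1)=\lambda^3$ for $\lambda \in F^\times$, so that $F^{*3} \subset N(J^\times)\cdot N(J^\times)^{-1}$). Since $\gcd(2,3)=1$, we have $F^{*2}\cdot F^{*3} = F^*$, giving the desired equality of the image with the full scalar group. The second assertion is then immediate: the image is abelian (isomorphic to $F^*$), so the derived group $[H_0^\dagger, H_0^\dagger]$ lies in the kernel of the restriction to ${\rm Aut}(U_6)$, i.e., it centralizes $U_6$.

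The main obstacle will be extracting the precise Hua formulas with correct signs and normalizations from \cite[Chapter~16]{TW} for the hexagon $\mathscr{H}(\Xi)$, and ensuring that the generation argument $F^{*2}\cdot F^{*3}=F^*$ is obtained without any hidden dependency on the very conjectures whose equivalence the appendix is establishing.
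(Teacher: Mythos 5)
The paper's proof here is a direct citation to \cite[(33.16)]{TW}; your proposal instead attempts to re-derive that statement from the $\mu$-map formulas. The strategy of splitting $H_0^\dagger=X_1X_6$ (via Proposition~\ref{hex9}) and combining the two images using $\gcd(2,3)=1$ is sound, and is presumably close to what lies behind \cite[(33.16)]{TW}. However the two displayed formulas you rely on are not correct, and the error is structural rather than one of ``signs and normalizations.'' By the very construction in \cite[(6.1)]{TW}, $\mu_6(a)\in U_0^*\,a\,U_0^*$ is a reflection of the apartment fixing the two opposite vertices terminating the root $\alpha_6$; as such it \emph{interchanges} $U_6$ with its opposite root group $U_0$, it does not normalize $U_6$. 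So $\mu_6(a)\,x_6(t)\,\mu_6(a)^{-1}$ lies in $U_0$, not in $U_6$, and the identity $\mu_6(a)\,x_6(t)\,\mu_6(a)^{-1}=x_6(-t/a^2)$ cannot hold. The same objection applies to $\mu_1(u)\,x_6(t)\,\mu_1(u)^{-1}=x_6(N(u)t)$: the reflection $\mu_1(u)$ fixes the vertices $x_1$ and $x_7$ and sends $U_6$ to $U_2$, so again the right-hand side is not in $U_6$. Only the \emph{products} $\mu_i(a)\mu_i(b)$ (the generators of $X_i$) normalize every root group, and it is those that one must compute.

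There is also an internal inconsistency that flags the problem: iterating your first formula literally would give $\bigl(\mu_6(a)\mu_6(b)\bigr)x_6(t)\bigl(\mu_6(a)\mu_6(b)\bigr)^{-1}=x_6\bigl(t/(a^2b^2)\bigr)$, not $x_6\bigl((b/a)^2t\bigr)$ as you assert in the very next sentence. (The correct scalar for the product $\mu_6(a)\mu_6(b)$ acting on $U_6$ is of the form $(a/b)^{\pm2}$, as the $\mathrm{SL}_2$ computation inside the rank-one group $\langle U_0,U_6\rangle$ shows, so your \emph{conclusion} about $X_6$ contributing $F^{*2}$ is right even though the derivation is not.) If you redo the calculation with the correct two-step conjugations $\mu_6(b)\colon U_6\to U_0$, $\mu_6(a)\colon U_0\to U_6$, and $\mu_1(v)\colon U_6\to U_2$, $\mu_1(u)\colon U_2\to U_6$, reading off the scalar factors from the explicit formulas in \cite[Ch.~32--33]{TW}, your argument does go through: $X_6$ contributes $F^{*2}$, $X_1$ contributes a subgroup containing $N(J^\times)$ and hence $F^{*3}$ (using $N(\lambda\cdot 1)=\lambda^3$), and $F^{*2}\cdot F^{*3}=F^*$. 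The ``in particular'' clause then follows exactly as you say, since $F^*$ is abelian. So the plan is viable, but as written it rests on incorrect single-$\mu$-map identities; the paper avoids all of this simply by citing \cite[(33.16)]{TW}.
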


\begin{proof}
This holds by \cite[(33.16)]{TW}.
\end{proof}

\begin{proposition}\label{hex30}
The groups $J$ and $H_0^\dagger$ have the same image in ${\rm Aut}(U_6)$.
\end{proposition}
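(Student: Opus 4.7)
The plan is as follows. By construction $H_0^\dagger \subseteq J$, so the image of $H_0^\dagger$ in ${\rm Aut}(U_6)$ is automatically contained in the image of $J$. The content of the proposition is the reverse inclusion: every conjugation automorphism of $U_6$ coming from an element of $J$ must already lie in the $k^\times$ of scalar multiplications appearing in Proposition \ref{hex10}. For this I would exploit that $U_6$ is one-dimensional over the base field and that $J$ consists of $k$-rational points of $G$.

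First, I would record that any $g \in J$ normalizes each relative root group $U_i$. Indeed, $g$ fixes every vertex of $\Sigma$, hence fixes setwise each root of $\Sigma$, and therefore stabilizes the root group $U_i$ of the Moufang hexagon attached to that root (via the identification in Proposition \ref{hex40}). In particular, $g$ normalizes $U_6$, so conjugation defines a homomorphism $J \to {\rm Aut}(U_6)$.

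Second, I would use the fact that $\alpha_6$ is a long root of $\Phi_k$, and thus by Remark \ref{hex33} the relative root group $U_{(\alpha_6)}$ is one-dimensional over $k$, i.e. $k$-isomorphic to $\mathbb{G}_a$, with $U_6 = U_{(\alpha_6)}(k)$ parametrized additively by $x_6 : k \iso U_6$ as in Proposition \ref{hex14}. Since $g \in J \subset G(k)$ is a $k$-rational point, the induced conjugation automorphism of $U_{(\alpha_6)}$ is a $k$-algebraic group automorphism of $\mathbb{G}_a$, and the only such automorphisms are the scalar multiplications by elements of $k^\times$. Hence the induced automorphism of $U_6$ has the form $x_6(t) \mapsto x_6(s\, t)$ for some $s \in k^\times$, showing that the image of $J$ in ${\rm Aut}(U_6)$ is contained in $\{x_6(t) \mapsto x_6(st) \mid s \in k^\times\}$.

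Finally, combining with Proposition \ref{hex10}, this last set is exactly the image of $H_0^\dagger$, so the two images in ${\rm Aut}(U_6)$ coincide. The only real subtlety is the passage from an abstract conjugation automorphism to a $k$-algebraic one, which is why it is essential that $J$ is defined as $G(k) \cap H_0$ rather than as a subgroup of the (potentially larger) group ${\rm Aut}(X)$; elements of $G_0$ that are not $k$-rational could induce extra automorphisms of $U_6$ such as Galois-semilinear ones, and ruling this out for $J$ is the main—but quite easy—point.
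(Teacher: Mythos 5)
Your argument takes a genuinely different route from the paper's, and the central idea is sound: since $J\subset G(k)$, conjugation by $g\in J$ induces a $k$-algebraic automorphism of the one-dimensional root group $U_{(\alpha_6)}\cong\mathbb{G}_{a,k}$, and ${\rm Aut}_k(\mathbb{G}_a)=k^\times$. However, the final sentence has a gap. You silently identify the $\mathbb{G}_a$-parametrization of $U_6$ over $k$ with the parametrization $x_6\colon F\iso U_6$ of Proposition~\ref{hex14}, where $F$ is the field of the cubic norm structure, and you then read Proposition~\ref{hex10} as saying that the image of $H_0^\dagger$ is exactly the set of $k^\times$-scalar multiplications. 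But Proposition~\ref{hex10} describes that image as $\{x_6(t)\mapsto x_6(st)\mid s\in F^*\}$, and at this stage neither $F\cong k$ nor the compatibility of the two one-dimensional vector-space structures on $U_6$ has been established; that is part of the content of Proposition~\ref{hex31}, which comes later and is not purely formal.

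The gap can be closed by an observation you are implicitly making: both groups of scalar multiplications act sharply transitively on $U_6^*$, and the image of $H_0^\dagger$ lies inside the image of $J$, hence inside your $k^\times$ group; two sharply transitive groups with one contained in the other must coincide, so the image of $J$ equals that of $H_0^\dagger$. Making this explicit is precisely what the paper does, with no reference to algebraic automorphisms of $\mathbb{G}_a$ at all: it records that $H_0^\dagger$ acts transitively on $U_6^*$ (Proposition~\ref{hex10}) while the image of $J$ acts freely (Proposition~\ref{hex1}(ii) together with Remark~\ref{hex33}), so that the inclusion $H_0^\dagger\subset J$ already forces the two images to coincide. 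Your route replaces the free-action input of Proposition~\ref{hex1}(ii) by the $k$-rationality of $J$ and the structure of ${\rm Aut}_k(\mathbb{G}_a)$, which is a legitimate alternative, but you should then still invoke the sharp-transitivity comparison rather than appeal to an identification of $F$ with $k$ that is not yet available.
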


\begin{proof}
By \ref{hex10}, $H_0^\dagger$ acts transitively on $U_6^*$. 
By \ref{hex1}(ii) and \ref{hex33}, the image
of $J$ in ${\rm Aut}(U_6)$ acts freely on $U_6^*$. Since
$H_0^\dagger\subset J$, the claim follows.
\end{proof}

\begin{proposition}\label{hex11}
$H_0^\dagger=J$ if and only if $C_J(U_6)=C_{H_0^\dagger}(U_6)$.
\end{proposition}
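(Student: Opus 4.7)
The forward implication is immediate from $H_0^\dagger \subset J$, which is noted right after Notation \ref{hex7}: if $H_0^\dagger = J$, then trivially $C_{H_0^\dagger}(U_6) = C_J(U_6)$. So the content of the proposition lies entirely in the reverse implication, and my plan is to derive it directly from Proposition \ref{hex30}.

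The plan is as follows. Assume $C_J(U_6) = C_{H_0^\dagger}(U_6)$, and let $g \in J$ be arbitrary. By Proposition \ref{hex30}, the images of $J$ and $H_0^\dagger$ in $\mathrm{Aut}(U_6)$ coincide, so there exists $h \in H_0^\dagger$ inducing the same automorphism of $U_6$ as $g$. Then $h^{-1}g$ lies in $C_J(U_6)$ (using that $H_0^\dagger \subset J$, so $h^{-1}g \in J$). By hypothesis $C_J(U_6) = C_{H_0^\dagger}(U_6) \subset H_0^\dagger$, hence $h^{-1}g \in H_0^\dagger$, which gives $g \in H_0^\dagger$. This proves $J \subset H_0^\dagger$, and combined with the reverse containment, $J = H_0^\dagger$.

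There is essentially no obstacle here: the proposition is a formal consequence of Proposition \ref{hex30} together with the elementary observation that a subgroup inclusion $H \subset J$ is an equality whenever $H$ and $J$ induce the same automorphism group on a set while sharing the same kernel of that action. The only thing one must be slightly careful about is that Proposition \ref{hex30} is invoked exactly in the form stating equality of images (not merely that one image contains the other), and that $H_0^\dagger \subset J$ so that $h^{-1}g$ really lies in $J$ and we may apply the centralizer hypothesis. No further results from the paper are needed.
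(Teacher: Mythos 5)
Your proof is correct and follows exactly the argument given in the paper: use Proposition~\ref{hex30} to find $h\in H_0^\dagger$ with the same image in $\mathrm{Aut}(U_6)$ as $g$, observe that $h^{-1}g\in C_J(U_6)=C_{H_0^\dagger}(U_6)$, and conclude $g\in H_0^\dagger$. The only cosmetic difference is that the paper writes $gh\in C_J(U_6)$ where you write $h^{-1}g$; otherwise the two proofs are identical.
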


\begin{proof}
Suppose that $C_J(U_6)=C_{H_0^\dagger}(U_6)$ and let $g\in J$.
By \ref{hex30}, there exists $h\in H_0^\dagger$ such that $gh\in C_J(U_6)$.
Hence $gh\in H_0^\dagger$ and thus $g\in H_0^\dagger$. Therefore $H_0^\dagger=J$.
\end{proof}

\begin{proposition}\label{hex12}
$C_{H_0}(U_6)$ acts faithfully on $U_1$.
\end{proposition}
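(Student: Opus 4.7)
The plan is to show that any $h\in C_{H_0}(U_6)$ acting trivially on $U_1$ must be the identity of $\mathrm{Aut}(X)$. Since $h\in H_0$ fixes the apartment $\Sigma$ pointwise, conjugation by $h$ stabilises each of the twelve root groups $U_i:=U_{\alpha_i}$ ($i\in\Z_{12}$) and induces an automorphism $h_i\colon U_i\to U_i$; by hypothesis $h_1=h_6=\mathrm{id}$. I will prove that $h_i=\mathrm{id}$ for every $i\in\Z_{12}$, after which the standard chamber-connectivity property of spherical Moufang buildings (every chamber of $X$ is reachable from a chamber of $\Sigma$ by a finite product of root-group elements) forces $h=1$, as $G_0=\mathrm{Aut}(X)$ acts faithfully on chambers.

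First I will invoke the ``long'' commutator relation of \cite[(16.8)]{TW} for the pair $(U_1,U_6)$, which gives an explicit expansion
\[
[x_1(a),x_6(t)^{-1}]=x_2(f_2(a,t))\,x_3(f_3(a,t))\,x_4(f_4(a,t))\,x_5(f_5(a,t))
\]
with each coordinate map $f_i$ polynomial in the operations of the cubic norm structure $\Xi$. Since $h$ conjugates this identity to itself (using $h_1=h_6=\mathrm{id}$) and $U_2\cdot U_3\cdot U_4\cdot U_5$ is a set-theoretic direct product, I obtain $h_i(f_i(a,t))=f_i(a,t)$ for all $(a,t)\in J\times F^*$ and $i\in\{2,3,4,5\}$. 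A direct inspection of the extremal components of \cite[(16.8)]{TW} shows they sweep out all of $U_2$ and $U_5$ as $(a,t)$ varies, giving $h_2=h_5=\mathrm{id}$; then the shorter commutator relations from the same list, in particular $[x_1(a),x_3(b)^{-1}]=x_2(T(a,b))$, combined with the non-degeneracy of the trace bilinear form $T$ on $J$ and of scalar multiplication, force $h_3=h_4=\mathrm{id}$.

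Next I will propagate around the apartment. With $h_i=\mathrm{id}$ for $i=1,\ldots,6$ secured, the same commutator trick applied successively to the cyclic-distance-$5$ pairs $(U_2,U_7),(U_3,U_8),(U_4,U_9),(U_5,U_{10}),(U_6,U_{11})$ and then to the short pair $(U_3,U_{12})$ settles $h_7,\ldots,h_{12}$ in turn: in each case the commutator lies in a product of root groups on which $h$ is already known to act trivially, so conjugation by $h$ produces an identity forcing $h_j=\mathrm{id}$ for the targeted new index via the non-degeneracy of the relevant coordinate map in its remaining argument.

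The main obstacle will be the case-by-case bookkeeping of verifying, from the explicit formulas of \cite[(16.8)]{TW} for an exceptional cubic norm structure, that each coordinate map invoked at each step is surjective (or at least has additive image generating the target root group) in its remaining argument. This reduces to the non-degeneracy of the standard operations of a cubic norm structure — the trace form $T$, the adjoint $\#$, the cross product $\times$, the norm $N$, and scalar multiplication — and is in the spirit of the explicit calculations already carried out in \cite[Chapter 33]{TW}.
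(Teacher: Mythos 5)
Your overall strategy---conjugate the Moufang-hexagon commutator relations of \cite[(16.8)]{TW} by $h$, read off $h_i=\mathrm{id}$ on one root group at a time, and finish via transitivity of the root groups on chambers---is in substance the argument behind the result \cite[(33.5)]{TW} that the paper cites here, so you have reconstructed the argument rather than found a genuinely different route. Two details deserve comment.

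First, the pair $(U_3,U_{12})$ cannot be used to settle $h_{12}$: the roots $\alpha_3$ and $\alpha_{12}=\alpha_0$ are orthogonal in $G_2$ and of different lengths, and for such a short--long orthogonal pair no positive integer combination $i\alpha_3+j\alpha_0$ is a root (one checks $|i\alpha_3+j\alpha_0|^2 = 2i^2+6j^2$ is never $2$ or $6$ for $i,j\ge 1$). Hence $[U_3,U_{12}]=1$, and conjugating this identity by $h$ carries no information. Replace it by a genuine cyclic-distance-$5$ pair such as $(U_7,U_{12})$; this has the same shape as $(U_3,U_8)$, and it is precisely at such steps that the anisotropy of $N$ is used to make the relevant coordinate map injective in the new argument.

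Second, and more substantively, the entire propagation to $h_7,\ldots,h_{12}$ is unnecessary. Once $h_1=\cdots=h_6=\mathrm{id}$, the element $h$ centralizes the group $U_1\cdots U_6$, which by the Moufang condition acts simply transitively on the set of apartments of $X$ containing the edge $\{x_6,x_7\}$. Since $h$ also fixes $\Sigma$ pointwise, it fixes each such apartment pointwise, and as every chamber of $X$ lies in one of them, $h=1$. Thus only the first part of your argument---the bookkeeping around the relations $[U_1,U_j]$ and their companions for $j\le 6$---is actually needed.
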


\begin{proof}
This holds by \cite[(33.5)]{TW}.
\end{proof}

\begin{notation}\label{hex23}
Let $H$ be the anisotropic kernel of $G$. Thus $H$ is the derived group of the centralizer $C(S)$.
\end{notation}

\begin{proposition}\label{hex22}
$H(k)\subset C_J(U_6)$.
\end{proposition}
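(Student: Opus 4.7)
The plan is to prove the containment by splitting it into two independent verifications: $H(k) \subset J$ and $H(k) \subset C_{G(k)}(U_6)$.

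First I would verify $H(k) \subset J$. This should follow from the standard identification of the pointwise stabilizer of the apartment $\Sigma$ in $G(k)$ with $Z_G(S)(k)$. Since $H$ is by definition the derived group of $C(S) = Z_G(S)$, we have $H(k) \subset Z_G(S)(k) = J$. If this identification is not considered established in the appendix, I would spend a line recalling that the vertices of $\Sigma$ correspond to the proper parabolic $k$-subgroups of $G$ containing $S$, so a $k$-point fixing every vertex of $\Sigma$ lies in every such parabolic; the intersection of two opposite minimal parabolic $k$-subgroups containing $S$ already equals $Z_G(S)$.

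Next I would show that $H$ centralizes the relative root group $U_{(\alpha_6)}$, whose group of $k$-points is $U_6$. The input here is twofold. On the one hand, $H \subset C(S)$ normalizes $U_{(\alpha_6)}$ because each relative root group $U_{(\alpha)}$ is $S$-stable (the torus $S$ acts on it via the character $\alpha$), and $C(S)$ normalizes every $S$-stable subgroup of $G$. On the other hand, by Remark \ref{hex33} the group $U_{(\alpha_6)}$ is one-dimensional, since $\alpha_6$ is a long root of $\Phi_k$. Now $G$ is simply connected semisimple of type $E_8$, so $C(S)$ is connected reductive and $H = [C(S), C(S)]$ is a connected semisimple $k$-group, which therefore admits no non-trivial rational characters. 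Any action of $H$ on the one-dimensional vector $k$-group $U_{(\alpha_6)}$ factors through such a character and so must be trivial. Hence $H$ centralizes $U_{(\alpha_6)}$, and passing to $k$-points, $H(k)$ centralizes $U_6$.

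I do not expect a serious obstacle here; the only subtlety is the correct identification of $J$ with $Z_G(S)(k)$ and of $U_6$ with $U_{(\alpha_6)}(k)$, both of which are direct consequences of the setup in \ref{hex21}--\ref{hex40}. Combining the two inclusions gives $H(k) \subset J \cap C_{G(k)}(U_6) = C_J(U_6)$, as asserted.
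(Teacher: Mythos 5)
Your proof is correct in substance and reaches the same conclusion via the same underlying idea: use $H\subset C(S)$ to get $H(k)\subset J$, then kill the action of $H$ on $U_6$ by a character argument. The paper's own proof is terser: it deduces $C(S)(k)\subset J$ from the fixed-point description of $\Sigma$ and then appeals to Propositions~\ref{hex10} and~\ref{hex30} (which identify the image of $J$ in $\operatorname{Aut}(U_6)$ with $F^\times$), leaving implicit precisely the fact you make explicit, namely that the resulting homomorphism from the semisimple group $H$ to a one-dimensional torus must be trivial. So your version is a legitimate and slightly more self-contained unpacking of the same argument, using \ref{hex33} to get one-dimensionality of $U_{(\alpha_6)}$ directly rather than routing through the building-theoretic statements \ref{hex10}, \ref{hex30}.

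One inaccuracy to repair: the claim that ``$C(S)$ normalizes every $S$-stable subgroup of $G$'' is false in general. (For instance, with $G=\mathrm{SL}_2\times\mathrm{SL}_2$ and $S$ the diagonal torus of the first factor, $C(S)$ contains the full second factor, which certainly does not normalize an arbitrary $S$-stable subgroup of that factor.) What you actually need, and what is true, is the standard Borel--Tits fact that $C(S)$ normalizes each relative root group $U_{(\alpha)}$; this follows because $U_{(\alpha)}$ is built from the $S$-weight spaces for $\alpha$ and $2\alpha$, and conjugation by $C(S)$ preserves each $S$-weight space. With that substitution, the rest of your argument — $U_{(\alpha_6)}$ is one-dimensional by \ref{hex33} since $\alpha_6$ is long, hence the conjugation action of $H$ on it is given by a character of the connected semisimple group $H$, hence trivial — goes through, and combined with $H(k)\subset C(S)(k)\subset J$ yields $H(k)\subset C_J(U_6)$.
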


\begin{proof}
The fixed point set of $S(k)$ in $X$ is the apartment $\Sigma$. Hence 
$C(S)(k)\subset J$. By \ref{hex10} and \ref{hex30}, it follows that $H(k)\subset C_J(U_6)$.
\end{proof}

\begin{proposition}\label{hex31}
$F\cong k$ and $\dim_FJ=27$, where $F$ is as in {\rm\ref{hex14}}.
\end{proposition}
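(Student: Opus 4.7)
The plan is to identify $F$ with $k$ by comparing, on the one-dimensional long-root group $U_6$, the scalar action of $F^*$ coming from the hexagon parametrization $x_6$ with the scalar action of $k^*$ coming from the identification $U_6\cong \mathbb{G}_{a,k}$, and then to deduce $\dim_F J=27$.

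By Remark \ref{hex33}, $U_6=U_{(\alpha_6)}$ is a one-dimensional, connected, unipotent $k$-subgroup of $G$, hence $k$-isomorphic to $\mathbb{G}_{a,k}$; consequently $U_6(k)\cong (k,+)$, and the $k$-algebraic automorphisms of $U_6$ induce precisely the scalar action of $k^*$ on $U_6(k)$. I view both $F^*$ and $k^*$ as subgroups of the abelian-group automorphism group $\mathrm{Aut}(U_6(k))$ and aim to show $F^*=k^*$. For $F^*\subseteq k^*$: any $g\in J=G(k)\cap H_0$ fixes $\Sigma$ pointwise, so it normalizes $U_6$, and conjugation by $g$ acts on $U_6$ by a $k$-algebraic automorphism; its effect on $U_6(k)$ is therefore multiplication by some element of $k^*$. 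By Propositions \ref{hex30} and \ref{hex10}, the image of $J$ in $\mathrm{Aut}(U_6(k))$ is precisely $F^*$ (acting via $x_6$), whence $F^*\subseteq k^*$. For the reverse inclusion: since $\alpha_6$ is a relative root, $\alpha_6|_S\colon S\to \mathbb{G}_{m,k}$ is a nontrivial, hence surjective, morphism of split $k$-tori, so $\alpha_6|_S(S(k))=k^*$. Because $S$ fixes $\Sigma$ pointwise, $S(k)\subset J$, and the image of $S(k)$ in $\mathrm{Aut}(U_6(k))$ is simultaneously $k^*$ and a subset of $F^*$. Combining, $F^*=k^*$; adjoining the zero endomorphism recovers the two subfields of $\mathrm{End}(U_6(k))$, yielding $F\cong k$.

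For $\dim_F J=27$, I would proceed analogously on the short-root side. By Remark \ref{hex50}, $U_1=U_{(\alpha_1)}$ has $k$-algebraic dimension $27$, so $U_1(k)$ carries a natural $k$-vector space structure of dimension $27$ coming from its $k$-scheme structure. Transporting the $F^*$-action on $J$ through $x_1\colon J\to U_1(k)$ and repeating the comparison of scalar actions used for $U_6$ identifies this $F^*$-action with the natural $k^*$-action on $U_1(k)$ under the identification $F=k$; hence $\dim_F J=\dim_k U_1(k)=27$. (Equivalently, one may simply invoke Remark \ref{hex34}, which exhibits $\Xi$ as a division Albert algebra via a Tits construction over $F\cong k$ and therefore forces $\dim_F J=27$.)

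The main obstacle lies in this last identification: one must verify that the $F^*$-action on $U_1$ induced by the Hua subgroup (see \cite[(33.16)]{TW}) genuinely coincides with the natural $k^*$-scalar action on $U_1(k)$ coming from the $k$-scheme structure on the root group $U_{(\alpha_1)}$, and is not a nontrivial twist thereof. This amounts to tracking how the Hua elements $\mu_1(a)\mu_1(b)$ act simultaneously on $U_6$ (where the action is known to be by field multiplication in $F$) and on $U_1$, and confirming compatibility of the two normalizations; for the cleaner statement $\dim_F J=27$, however, the Tits-construction route via Remark \ref{hex34} fully suffices.
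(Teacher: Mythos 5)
Your argument that $F\cong k$ is correct, and it differs mildly in route from the paper's: you compare the two scalar actions on the one-dimensional group $U_6$, whereas the paper performs a single comparison on $U_1$. Concretely, the paper equips $U_1$ with the $F$-vector space structure $t\cdot x_1(a)=x_1(ta)$, cites \cite[(29.20)]{TW} to see that $X_6$ acts on $U_1$ precisely by $F^*$-scalars, and then uses that $X_6=\langle U_0,U_6\rangle\cap S(k)$ sits inside the split torus $S(k)$, so $X_6$ acts on $U_{(\alpha_1)}$ through a nontrivial character of $S$ and therefore by $k^*$-scalar multiplications. Matching the two scalar actions of $X_6$ on $U_1$ in this way gives both the isomorphism $\psi\colon k\to F$ and the equality $\dim_FJ=\dim_FU_1=\dim_kU_1=27$ (using \ref{hex50}) simultaneously.

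That last step is precisely the one your proposal leaves open, and you correctly flag it as the main obstacle: you must know that the $F^*$-action of the Hua subgroup $X_6$ on the $27$-dimensional group $U_1$ is, under the identification $F=k$ you established on $U_6$, the $k^*$-scalar action, rather than some other abelian subgroup of the full automorphism group $\mathrm{GL}_{27}(k)$ of the vector group $U_{(\alpha_1)}$. The paper's observation that $X_6\subset S(k)$ supplies exactly this, since $S$ acts on $U_{(\alpha_1)}$ through the single character $\alpha_1|_S$ (all absolute roots restricting to $\alpha_1$ give the same character of $S$), hence by $k^*$-scalars. Your proposed fallback via Remark \ref{hex34} does not close the gap: that remark asserts only that an anisotropic cubic norm structure arises from a Tits construction -- a statement that covers all the possible $F$-dimensions $1$, $3$, $9$, $27$ -- and does not by itself identify $\Xi$ as $27$-dimensional. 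Worse, asserting at this point that $\Xi$ is a (division) Albert algebra would be circular, since Proposition \ref{hex15}, which establishes that $\Xi$ is exceptional and hence $27$-dimensional, itself invokes the present Proposition \ref{hex31}.
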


\begin{proof}
We give $U_1$ the structure of a vector space over $F$ by setting
$t\cdot x_1(a)=x_1(ta)$ for all $t\in F$ and all $a\in J$. Let $X_6$ be as in \ref{hex9}. 
By \cite[(29.20)]{TW}, the image of $X_6$ in ${\rm Aut}(U_1)$ is
$\{x_1(a)\mapsto t\cdot x_1(a)\mid t\in F^*\}$. The root groups $U_{\alpha_0}$ and $U_{\alpha_6}$
of $G$ are opposite and $X_6=\langle U_0,U_6\rangle\cap S(k)$. It follows
that there exists an isomorphism $\psi$ from $k$ to $F$ such that
$tu=\psi(t)\cdot u$ for all $t\in k$ and all $u\in U_1$. Hence $\dim_FJ=\dim_FU_1=27$
by \ref{hex50}.
\end{proof}

\begin{proposition}\label{hex15}
There is an isomorphism $\varphi$ from 
$C_J(U_6)$ to ${\rm Str}(\Xi)$ and $\Xi$ is exceptional.
\end{proposition}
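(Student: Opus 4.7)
My plan is to construct $\varphi$ by restricting the action of $C_J(U_6)$ to the root group $U_1$, which by \ref{hex14} is canonically identified (as an additive group) with $J$, and to identify the resulting image with ${\rm Str}(\Xi)$. Injectivity of such a restriction is already contained in \ref{hex12}, since $C_J(U_6) \subset C_{H_0}(U_6)$ and the latter acts faithfully on $U_1$. So the real content is in pinning down the image and, separately, in showing $\Xi$ is exceptional.

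First I would note that any $g \in C_J(U_6)$ fixes $\Sigma$ pointwise and hence normalizes each root group $U_i$, inducing an automorphism $\varphi_i$ of $U_i$ with $\varphi_6 = {\rm id}$. Via the parametrizations in \ref{hex14} these are additive bijections of $J$ for $i \in \{1,3,5\}$ and of $F$ for $i \in \{2,4,6\}$. The commutator relations \cite[(16.8)]{TW} for $\mathscr H(\Xi)$ express each nontrivial $[U_i,U_j]$ in terms of the cubic norm data $(N,\#,T,\times)$, and conjugation by $g$ must preserve every such relation. Combining this with the fact that the torus $X_6$ (which by \ref{hex10} lies in $C_{H_0}(U_6)$) acts on $U_1$ by $F^*$-scalar multiplication via \cite[(29.20)]{TW}---so that $\varphi_1$ commutes with all $F$-scalars and is therefore $F$-linear---one reads off of the relations that $\varphi_1$ preserves $N$ up to a multiplier and is compatible with $\#$ and $T$. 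This is exactly the definition of an element of ${\rm Str}(\Xi)$ given in \cite[Chapter~28]{TW}. Setting $\varphi(g) := \varphi_1$ then yields an injective homomorphism $\varphi\colon C_J(U_6) \to {\rm Str}(\Xi)$.

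For surjectivity, every $\psi \in {\rm Str}(\Xi)$ extends canonically to an automorphism $\tilde\psi$ of the Moufang hexagon $X = \mathscr H(\Xi)$ that fixes $\Sigma$ pointwise and centralizes $U_6$; this is essentially the reverse of the construction of $X$ from $\Xi$ in \cite[Chapter~16]{TW}. The one delicate point I expect---and the main obstacle---is showing that $\tilde\psi$ belongs to $G(k)$ rather than merely to $G_0 = {\rm Aut}(X)$. To handle this I would exploit that ${\rm Str}(\Xi)$ is generated by $F^*$-scalar homotheties, which are realized by $S(k) \subset C(S)(k) \subset J$, together with the transformations coming from the inner structure subgroup, which can be built up from products of $\mu$-maps $\mu_i(a)\mu_i(b)$ with $i$ odd lying in $G_0^\dagger \subset G(k)$. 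Matching generators then shows that $\tilde\psi \in G(k)\cap H_0 = J$, and since $\tilde\psi$ centralizes $U_6$ by construction, $\tilde\psi \in C_J(U_6)$, giving surjectivity.

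Finally, that $\Xi$ is exceptional is immediate from \ref{hex31}: we have $\dim_F J = 27$, and in the Petersson--Racine classification (\ref{hex34}) of anisotropic cubic norm structures the only ones of dimension $27$ are those arising from Albert division algebras, which are by definition the exceptional ones.
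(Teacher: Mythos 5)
Your construction of $\varphi$ via restriction to $U_1$, together with the injectivity appeal to \ref{hex12}, matches the paper's approach in spirit, and your exceptionality argument via $\dim_F J=27$ from \ref{hex31} is essentially right. The serious problem is in your surjectivity argument, which is circular with respect to the very conjecture being treated. You propose to show that every $\psi\in{\rm Str}(\Xi)$ has a preimage in $C_J(U_6)$ by writing $\psi$ as a product of $F^*$-homotheties and elements of the inner structure group (built from $\mu$-map products). But the assertion that ${\rm Str}(\Xi)$ is generated by the homotheties and the inner structure group \emph{is} precisely Theorem \ref{hex2} --- the Tits--Weiss conjecture whose equivalence with \ref{hex4} the whole appendix is designed to establish. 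Indeed \ref{hex13} identifies $\varphi(C_{H_0^\dagger}(U_6))$ with exactly that generated subgroup, so assuming it equals ${\rm Str}(\Xi)$ forces $C_{H_0^\dagger}(U_6)=C_J(U_6)$, which by \ref{hex11} and \ref{hex8} gives $G(k)=G_0^\dagger$; you would have assumed the conclusion.

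The paper avoids this by bringing in the anisotropic kernel $H$, which your proposal never uses. The key chain is $H(k)\subset C_J(U_6)\subset C_{H_0}(U_6)$ (the first inclusion is \ref{hex22}), combined with the identification, from \cite[(37.41)]{TW}, of $C_{H_0}(U_6)$ with ${\rm Str}(\Xi)$ as acting on $U_1$. Once \ref{hex31} pins down $\Xi$ as the $27$-dimensional Jordan $k$-algebra appearing in \cite[42.5.6(d) and 42.6, Type (8)]{TW}, the results of Tits on strongly inner anisotropic forms (the reference to \cite{strongly} following \cite[3.3.1]{strongly}) give that $\varphi$ already maps the \emph{smallest} term $H(k)$ surjectively onto ${\rm Str}(\Xi)$. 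Injectivity of $\varphi$ then collapses the sandwich to $H(k)=C_J(U_6)=C_{H_0}(U_6)$. This is the step your proof is missing: surjectivity must come from an external structural fact about $H(k)$ and its action, not from a hypothetical generating set for ${\rm Str}(\Xi)$.
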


\begin{proof}
By \ref{hex12} and the equation in the third display in \cite[(37.41)]{TW}, there exists
an isomorphism $\varphi$ from $C_{H_0}(U_6)$ to ${\rm Str}(\Xi)$ such that
$x_1(a)^h=x_1(a^{\varphi(h)})$ for all $a\in J$ and by \ref{hex22}, we have
$H(k)\subset C_J(U_6)\subset C_{H_0}(U_6)$. By \ref{hex31},
it follows that $\Xi$ is the Jordan $k$-algebra in \cite[42.5.6(d) and 42.6, Type~(8)]{TW}. 
Hence $\Xi$ is exceptional and $\varphi$ maps $H(k)$ to ${\rm Str}(\Xi)$ surjectively.
(See also the remarks that follow \cite[3.3.1]{strongly}.)
It follows that $H(k)=C_J(U_6)=C_{H_0}(U_6)$.
\end{proof}

\begin{proposition}\label{hex13}
$\varphi(C_{H_0^\dagger}(U_6))$ is the subgroup of ${\rm Str}(\Xi)$
generated by the set defined in \eqref{hex2a},
where $\varphi$ is as in {\rm\ref{hex15}}.
\end{proposition}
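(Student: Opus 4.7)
The plan is to use the factorization $H_0^\dagger = X_1 X_6$ from Proposition \ref{hex9}, combined with the Moufang hexagon commutator relations in \cite[(16.8)]{TW}, to track the $\varphi$-image of each generator of $C_{H_0^\dagger}(U_6)$ into $\mathrm{Str}(\Xi)$. The structural scaffolding comes from Proposition \ref{hex10}: since $H_0^\dagger$ acts on $U_6$ only through scalars in $k^*$, the subgroup $C_{H_0^\dagger}(U_6)$ is precisely the kernel of the scalar-action map $H_0^\dagger \to k^*$, and in particular it contains the full derived subgroup $[H_0^\dagger,H_0^\dagger]$.

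The main computational step I would carry out first is to determine, for the two families of generators $\mu_1(x_1(a))\mu_1(x_1(b))$ of $X_1$ (with $a,b \in J^*$) and $\mu_6(x_6(s))\mu_6(x_6(t))$ of $X_6$ (with $s,t \in k^*$), the scalars by which they act on $U_6$ and --- for those landing in $C_{H_0}(U_6)$ --- the element of $\mathrm{Str}(\Xi)$ they map to under $\varphi$. Using the commutator relations of \cite[(16.8)]{TW} and the definition of $\mu_i$ in \cite[(6.1)]{TW}, together with the formalism of \cite[Chapter~32]{TW}, the expected formulas are that $\mu_1(x_1(a))\mu_1(x_1(b))$ induces on $U_1 \cong J$ the operator $U_a U_b^{-1}$ and on $U_6$ the scalar $N(a)N(b)^{-1}$, while $\mu_6(x_6(s))\mu_6(x_6(t))$ induces on $U_1$ the scalar $st^{-1}$ and on $U_6$ the scalar $(st^{-1})^{\pm 2}$.

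Once these formulas are in hand, both containments become essentially mechanical. For $\varphi(C_{H_0^\dagger}(U_6))\supseteq \Theta$ (where $\Theta$ denotes the subgroup generated by the set in \eqref{hex2a}), I would realize each generator of $\Theta$ as an explicit $\varphi$-image: for $U_a$, combine $\mu_1(x_1(a))\mu_1(x_1(1)) \in X_1$, which induces $U_a$ on $U_1$ (noting $U_1 = \mathrm{Id}_J$), with an element of $X_6$ trivial on $U_1$ and inverting the $N(a)$-scalar on $U_6$; for the scalar homothety $b\mapsto tb$, combine an element of $X_6$ whose $U_1$-scalar is $t$ with a further element of $X_6$ cancelling the residual $U_6$-scalar. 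Existence of these cancelling elements is ensured because the joint map $X_6 \to \mathrm{Aut}(U_1)\times \mathrm{Aut}(U_6)$ surjects onto a large enough subgroup of scalars. Conversely, every element of $H_0^\dagger$ factors as $g_1 g_6 \in X_1 X_6$, and if it lies in $C_{H_0^\dagger}(U_6)$ its $\varphi$-image is visibly a product of operators of the form $U_a U_b^{-1}$ and scalar homotheties, hence lies in $\Theta$.

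The main obstacle will be the bookkeeping in the first step: matching the conjugation action of $\mu_1(x_1(a))\mu_1(x_1(b))$ on $U_1\cong J$ against the Jordan-algebraic formula for the $U$-operator in \cite[(15.42)]{TW}. While the Moufang hexagon commutator relations determine this action uniquely, pinning down the cocycle constants in \cite[(6.1)]{TW} to reproduce the precise Jordan formula $U_a b = T(a,b)a - a^{\#}\times b$ requires carefully tracking the coordinatization of the hexagon of type $^{27}E$ from \cite[Chapter~32]{TW}. Once this identification is secured, the rest of the argument reduces to the decomposition $H_0^\dagger = X_1 X_6$ and the description of $C_{H_0^\dagger}(U_6)$ as the scalar-action kernel.
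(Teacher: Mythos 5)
The paper's proof of Proposition~\ref{hex13} is a one-line citation to \cite[(33.16)]{TW}. Your proposal instead tries to re-derive the result from the factorization $H_0^\dagger = X_1 X_6$ (Proposition~\ref{hex9}) and the $\mu$-map formalism, which is a genuinely different, and much longer, route. The overall strategy is sensible, and the inclusion $\varphi(C_{H_0^\dagger}(U_6)) \subseteq \Theta$ (writing $\Theta$ for the subgroup generated by the set in \eqref{hex2a}) is plausible once the formulas are in hand. But there are two genuine gaps.

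First, the identification of $\varphi(\mu_1(x_1(a))\mu_1(x_1(b)))$ with $U_a U_b^{-1}$, and of the $U_6$-scalar with $N(a)N(b)^{-1}$, is introduced only as an ``expected formula,'' and you yourself single out the associated cocycle bookkeeping as the main obstacle. Since this identification is exactly what drives both inclusions, leaving it unverified means the argument has not actually closed the gap that the citation to \cite[(33.16)]{TW} closes for the paper.

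Second, and more concretely, the argument for $\varphi(C_{H_0^\dagger}(U_6)) \supseteq \Theta$ breaks on your own stated formulas. You propose to realize $U_a$ by composing $\mu_1(x_1(a))\mu_1(x_1(1))$ with an element of $X_6$ that is \emph{trivial on $U_1$} but scales $U_6$ by $N(a)^{-1}$, and similarly for the scalar homotheties. But if $\mu_6(x_6(s))\mu_6(x_6(t))$ acts on $U_1$ by the scalar $u = st^{-1}$ and on $U_6$ by $u^{\pm 2}$, then the image of $X_6$ in ${\rm Aut}(U_1)\times{\rm Aut}(U_6)$ is the one-parameter family $\{(u, u^{\pm 2})\mid u\in F^*\}$; an element of $X_6$ acting trivially on $U_1$ forces $u=1$ and hence acts trivially on $U_6$ too. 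The ``cancelling elements'' you invoke therefore do not exist, and the step that produces $U_a$ (as opposed to a product $U_a U_b^{-1}$ with $N(a)=N(b)$ possibly twisted by a scalar) requires a different argument. This is precisely the content that \cite[(33.16)]{TW} supplies and that your plan does not reproduce.
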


\begin{proof}
This holds by \cite[(33.16)]{TW}.
\end{proof}

\begin{proposition}\label{hex20}
Every anisotropic exceptional cubic norm structure arises from an application of 
{\rm\ref{hex14}} to the Moufang hexagon attached to a group $G(k)$ for some $G$ and some
$k$ satisfying the conditions in {\rm\ref{hex21}}.
\end{proposition}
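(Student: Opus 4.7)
The plan is to reverse the procedure of \ref{hex14}: starting from $\Xi$, first build the Moufang hexagon it determines, and then realize this hexagon as a residue of a spherical building of type $E_8$ coming from an algebraic group with the prescribed Tits index.

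First, I would apply the construction of \cite[(16.8)]{TW} to produce from $\Xi$ the Moufang hexagon $X=\mathscr{H}(\Xi)$, whose root groups along a fixed apartment are labeled $U_1,\ldots,U_6$ as in \ref{hex14}, with the long root groups isomorphic to the additive group of $k$ and the short root groups isomorphic to the additive group of $J$, and with the commutator relations of \cite[(16.8)]{TW} determined by the cubic norm data of $\Xi$.

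Next, I would invoke the Tits--Weiss classification of Moufang hexagons together with the algebraic groups that realize them. Anisotropic hexagons of exceptional type are tabulated in \cite[42.5.6(d) and 42.6, Type (8)]{TW}: each such hexagon arises as the rank-two building at infinity of a simply connected absolutely simple algebraic $k$-group $G$ of absolute type $E_8$ whose relative root system over $k$ is of type $G_2$. The anisotropic kernel of $G$ is pinned down by the root-group data along the long roots: by \ref{hex15} and \ref{hex22} the centralizer of a long root group inside the fixator of an apartment is the structure group of $\Xi$, so the anisotropic kernel is a strongly inner form of $E_6$. Consulting the Tits tables of $k$-forms of $E_8$, only the index $\mathscr{I}=E_{8,2}^{78}$ has a strongly inner $E_6$ as anisotropic kernel, so this forces the index of $G(k)$ to be precisely $E_{8,2}^{78}$.

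Finally, I would observe that applying the procedure of \ref{hex14} to this $G$ and its Moufang hexagon yields an anisotropic exceptional cubic norm structure $\Xi'$ with $\mathscr{H}(\Xi')\cong X=\mathscr{H}(\Xi)$; by \ref{hex35} it follows that $\Xi'$ is isotopic to $\Xi$, and the freedom in \ref{hex14} to choose the base vertex (equivalently, the identity element $1\in J'$) allows us to arrange the isotopy class representative so that $\Xi'=\Xi$ on the nose. The substantive content of the argument is the classification step: the existence, over every field $k$ admitting an anisotropic exceptional cubic norm structure, of an $E_8$-group with the prescribed anisotropic kernel and Tits index. This is a nontrivial piece of the Tits--Weiss classification of exceptional spherical buildings and is invoked as a black box from \cite[Chapter~42]{TW}.
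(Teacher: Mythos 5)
Your proposal is correct and takes essentially the same route as the paper: both reduce the claim to the classification entry \cite[42.6, Type~(8)]{TW}, which records that anisotropic exceptional cubic norm structures are exactly those attached to groups of index $E_{8,2}^{78}$. The paper's proof is a bare citation of that table entry; your write-up is a faithful unpacking of what that classification supplies (existence of the $E_8$-group over $k$, identification of the anisotropic kernel, and the isotopy bookkeeping via \ref{hex35}), with the genuinely substantive step being, as you say, a black-box appeal to Chapter 42 of Tits--Weiss.
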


\begin{proof}
This holds by \cite[42.6, Type~(8)]{TW}. 
\end{proof}

\begin{theorem}\label{hex16}
The assertions in {\rm\ref{hex4}} and {\rm\ref{hex2}} are equivalent.
\end{theorem}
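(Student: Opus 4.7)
The plan is to reduce both theorems, for a single fixed pair $(G(k), \Xi)$ linked through Propositions~\ref{hex14}--\ref{hex15}, to a common internal condition on the subgroup structure of $G(k)$, and then to use the surjectivity supplied by Proposition~\ref{hex20} to pass between the two universal statements. First I would translate Theorem~\ref{hex4} for the particular $G(k)$. By Proposition~\ref{hex40} together with Remarks~\ref{hex33} and~\ref{hex50}, and the convention that $U_\alpha = U_{(\alpha)}(k)$ for $\alpha \in \Phi_k$, the subgroup $\langle U_{(\alpha)}(k)\mid\alpha\in\Phi_k\rangle$ equals $G_0^\dagger = \langle U_i \mid i\in\mathbb{Z}_{12}\rangle$. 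Hence the assertion of Theorem~\ref{hex4} for this $G(k)$ becomes the single equality $G(k) = G_0^\dagger$.

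Next I would translate Theorem~\ref{hex2} for the associated $\Xi$ into the same language. The isomorphism $\varphi\colon C_J(U_6)\to {\rm Str}(\Xi)$ of Proposition~\ref{hex15} identifies ${\rm Str}(\Xi)$ with $C_J(U_6)$, while by Proposition~\ref{hex13} the image under $\varphi$ of $C_{H_0^\dagger}(U_6)$ is exactly the subgroup of ${\rm Str}(\Xi)$ generated by the set in \eqref{hex2a}. Therefore Theorem~\ref{hex2} for $\Xi$ is equivalent to $C_J(U_6) = C_{H_0^\dagger}(U_6)$. At this point the chain
\[
G(k) = G_0^\dagger \iff H_0^\dagger = J \iff C_J(U_6) = C_{H_0^\dagger}(U_6),
\]
furnished by Propositions~\ref{hex8} and~\ref{hex11} respectively, closes the loop for the pair $(G(k),\Xi)$.

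To finish, I would invoke Proposition~\ref{hex20}, which guarantees that every anisotropic exceptional cubic norm structure $\Xi$ arises from some $G(k)$ of index $E_{8,2}^{78}$; conversely, every such $G(k)$ produces a $\Xi$ of the required kind via \ref{hex14}--\ref{hex15}. Combined with the local equivalence above, this upgrades the per-pair equivalence to the equivalence of the two theorems as stated. All the substantial work having already been absorbed into the preceding propositions---in particular the structural identification in \ref{hex15} of $C_J(U_6)$ with ${\rm Str}(\Xi)$ and the explicit description in \ref{hex13} of the subgroup generated by the $U$-operators and scalar homotheties---Theorem~\ref{hex16} amounts to careful assembly. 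The one point that demands attention, rather than presenting a genuine obstacle, is verifying that the quantifiers ``for all $G(k)$ with index $E_{8,2}^{78}$'' and ``for all anisotropic exceptional $\Xi$'' are correctly matched up by the correspondence of~\ref{hex20}; this is painless, since the translation of each theorem to its internal condition is insensitive to the isotope class of $\Xi$ (cf.\ Remark~\ref{hex35}), so replacing $G(k)$ by any other $G'(k')$ yielding an isotope of $\Xi$ preserves the statement.
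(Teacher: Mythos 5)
Your proposal is correct and follows essentially the same route as the paper's own proof: the chain through Propositions~\ref{hex8}, \ref{hex11}, \ref{hex15}, and \ref{hex13}, closed by the surjectivity statement in Proposition~\ref{hex20}, is exactly what the authors do. Your added care about the quantifiers and the isotope class (via Remark~\ref{hex35}) is a helpful elaboration of a step the paper leaves implicit, but it is not a different argument.
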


\begin{proof}
By \ref{hex8} and \ref{hex11}, $G(k)=G_0^\dagger$ if and only if $C_{H_0^\dagger}(U_6)=C_J(U_6)$.
By \ref{hex15} and \ref{hex13}, $C_{H_0^\dagger}(U_6)=C_J(U_6)$ if and only if ${\rm Str}(\Xi)$
is generated by the set defined in \eqref{hex2a}.
The claim holds, therefore, by \ref{hex20}.
\end{proof}

\begin{remark}\label{hex24}
In \cite[Thm.~6.1]{thakur}, Thakur showed that \ref{hex4} holds in the case that the
cubic norm structure in \ref{hex14} is a first
Tits construction and in \cite[Thm.~7.2]{thakur}, he showed 
that the claim in \ref{hex2} holds for $\Xi$ a 
reduced (rather than anisotropic) exceptional cubic norm structures.
\end{remark}

\newpage
\end{bibunit}

\end{document}